\let\oldsection=\section
\newcommand{\p}[1]{\ensuremath{\overline{#1}}}
\newcommand{\losemi}{{\otimes \kern -.78em \ltimes}}
\newcommand{\rosemi}{{\otimes \kern -.78em \rtimes}}
\newcommand{\Hom}{\ensuremath{\operatorname{Hom}}}
\newcommand{\Ind}{\ensuremath{\operatorname{ind}}}
\newcommand{\Res}{\ensuremath{\operatorname{Res}}}
\newcommand{\Ext}{\operatorname{Ext}}
\newcommand{\0}{\bar 0}
\newcommand{\1}{\bar 1}
\newcommand{\Z}{\mathbb{Z}}
\newcommand{\C}{\mathbb{C}}
\newcommand{\gl}{\ensuremath{\mathfrak{gl}}}
\newcommand{\g}{\ensuremath{\mathfrak{g}}}
\newcommand{\f}{\ensuremath{\mathfrak{f}}}
\newcommand{\rk}{\operatorname{rank}}
\newcommand{\X}{\mathcal{X}}
\newcommand{\fg}{\ensuremath{\mathfrak{g}}}
\newcommand{\fb}{\ensuremath{\mathfrak{b}}}
\newcommand{\fh}{\ensuremath{\mathfrak{h}}}
\newcommand{\ff}{\ensuremath{\f}}
\newcommand{\fp}{\ensuremath{\mathfrak{p}}}
\newcommand{\ft}{\ensuremath{\mathfrak{t}}}
\newcommand{\s}{\sigma}
\newcommand{\atyp}{\ensuremath{\operatorname{atyp}}}
\newcommand{\V}{\mathcal{V}}
\newcommand{\HH}{\operatorname{H}}
\newcommand{\F}{\mathcal{F}}
\newcommand{\B}{\mathcal{B}}
\newcommand{\la}{\lambda}
\newcommand{\R}{\ensuremath{\mathbb{R}}}
\newcommand{\md}{\operatorname{\mathsf{d}}}
\def\Label{\label}
\newtheorem{theorem}{Theorem}[subsection]
\let\c@fact\c@theorem\makeatother
\let\c@note\c@theorem\makeatother
\newtheorem{lemma}{Lemma}[subsection]
\let\c@lemma\c@theorem\makeatother
\let\c@alg\c@theorem\makeatother
\newtheorem{remark}{Remark}[subsection]
\let\c@remark\c@theorem\makeatother
\let\c@example\c@theorem\makeatother
\newtheorem{prop}{Proposition}[subsection]
\let\c@prop\c@theorem\makeatother
\let\c@conj\c@theorem\makeatother
\let\c@cor\c@theorem\makeatother
\let\c@defn\c@theorem\makeatother
\numberwithin{equation}{subsection}
\crefname{theorem}{Theorem}{Theorems}
\crefname{fact}{Fact}{Facts}
\crefname{note}{Note}{Notes}
\crefname{lemma}{Lemma}{Lemmas}
\crefname{alg}{Algorithm}{Algorithms}
\crefname{remark}{Remark}{Remarks}
\crefname{example}{Example}{Examples}
\crefname{prop}{Proposition}{Propositions}
\crefname{conj}{Conjecture}{Conjectures}
\crefname{cor}{Corollary}{Corollaries}
\crefname{defn}{Definition}{Definitions}
\crefname{equation}{}{}
\begin{document}
\title{Complexity for modules over the classical Lie superalgebra ${\mathfrak gl}(m|n)$}

\author{Brian D. Boe }
\address{Department of Mathematics \\
          University of Georgia \\
          Athens, GA 30602}
\email{brian@math.uga.edu}
\author{Jonathan R. Kujawa}
\address{Department of Mathematics \\
          University of Oklahoma \\
          Norman, OK 73019}
\thanks{Research of the second author was partially supported by NSF grant
DMS-0734226 and NSA grant H98230-11-1-0127}\
\email{kujawa@math.ou.edu}
\author{Daniel K. Nakano}
\address{Department of Mathematics \\
          University of Georgia \\
          Athens, GA 30602}
\thanks{Research of the third author was partially supported by NSF
grant  DMS-1002135}\
\email{nakano@math.uga.edu}
\date{\today}
\subjclass[2000]{Primary 17B56, 17B10; Secondary 13A50}

\begin{abstract} Let ${\mathfrak g}={\mathfrak g}_{\0}\oplus {\mathfrak g}_{\1}$ be a classical Lie superalgebra and 
${\mathcal F}$ be the category of finite dimensional ${\mathfrak g}$-supermodules which are completely reducible over the 
reductive Lie algebra ${\mathfrak g}_{\0}$. In \cite{BKN3}, the authors demonstrated that for any module $M$ in 
${\mathcal F}$ the rate of growth of the minimal projective resolution (i.e., the complexity of $M$) is bounded 
by the dimension of ${\mathfrak g}_{\1}$. In this paper we compute the complexity of the simple modules and the 
Kac modules for the Lie superalgebra $\mathfrak{gl}(m|n)$. In both cases we show that the complexity is related to the atypicality 
of the block containing the module. 
\end{abstract}

\maketitle

\section{Introduction}\label{S:intro}  

\subsection{} Let ${\mathfrak g}={\mathfrak g}_{\0}\oplus {\mathfrak g}_{\1}$ be a classical Lie superalgebra over the complex numbers, ${\mathbb C}$. 
For classical Lie superalgebras ${\mathfrak g}_{\0}$ is a reductive Lie algebra. An important category of ${\mathfrak g}$-supermodules is the 
category ${\mathcal F}:={\mathcal F}_{(\g,\g_{\0})}$ of finite dimensional ${\mathfrak g}$-supermodules which are completely reducible over ${\mathfrak g}_{\0}$. 
The category ${\mathcal F}$ has enough projectives and is in general not semisimple. In \cite{BKN3}, the authors showed that 
(i) ${\mathcal F}$ is a self-injective category (meaning that a module being projective is equivalent to the module being injective), and 
(ii) every module in ${\mathcal F}$ admits a projective resolution which has polynomial rate of growth. For a module 
$M\in {\mathcal F}$, the complexity $c_{\mathcal F}(M)$ is the rate of growth of the minimal projective resolution of $M$. 
In \cite{BKN3}, it was proved by constructing an explicit Koszul type resolution that $c_{\mathcal F}(M)\leq \dim \g_{\1}$ for all $M\in {\mathcal F}$. 

It is well known that if $G$ is a finite group scheme then the category of rational modules for $G$ satisfies the same 
properties (i) and (ii) as given above. In this context the complexity of a module was first introduced by Alperin \cite{Al} in 1977. 
By using the fact that the cohomology ring for $G$ is finitely generated (cf.\ \cite{FS}), one can construct the (cohomological) 
support variety ${\mathcal V}_{G}(M)$ of a module $M$, whose dimension coincides with the complexity $c_{G}(M)$. This realization 
allows one to use geometric methods to compute the complexities of important classes of modules (see \cite{NPV, DNP, UGA3, HN}). 

The elusive ingredient for the superalgebra category ${\mathcal F}$ is a ``support variety'' theory which enables one to compute the 
complexity of modules in ${\mathcal F}$. In \cite{BKN3}, it was shown that there is a formula in terms of rates of growth of cohomology 
groups which realizes the complexity. The main goal of this paper will be to show how to compute the complexity for important classes of modules for $\gl (m|n)$. In particular we will show for $\mathfrak{gl}(m|n)$ that 
\begin{equation}\label{E:Kaccomplexity}
c_{\mathcal F}(K(\lambda))=(m+n)\atyp(\lambda)-\atyp(\lambda)^{2},
\end{equation}
which equals the dimension of the variety of $m \times n$ matrices of rank at most $\atyp(\lambda)$; and 
\begin{equation}\label{E:simplecomplexity}
c_{\mathcal F}(L(\lambda))=(m+n)\atyp(\lambda)-\atyp(\lambda)^{2}+\atyp(\lambda).
\end{equation}
Here $K(\lambda)$ (resp.\ $L(\lambda)$) is the Kac (resp.\ simple) 
module of highest weight $\lambda$, and $\text{atyp}(\lambda)$ is the atypicality of the weight $\lambda$ as defined by Kac and Wakimoto. 
Unlike the case with finite group schemes, our calculations show that the complexity is not invariant under equivalence of blocks. The proofs demonstrating these 
calculations employ a myriad of deep results, both known and new, about the category ${\mathcal F}$.  It is also worth noting that the formulas given in \cref{E:Kaccomplexity} and \cref{E:simplecomplexity} have the following remarkable geometric interpretation.  For a $\gl (m|n)$-module $M$ let $\X_{M}$ denote the associated variety defined by Duflo and Serganova \cite{dufloserganova},  and $\V_{(\fg , \fg_{\0})}(M)$ the support variety of \cite{BKN1}.  Then if $X(\lambda)$ is a Kac, dual Kac, or simple $\gl (m|n)$-module, we have 
\[
c_{\mathcal F}(X(\lambda)) = \dim \X_{X(\lambda)} + \dim \V_{(\fg , \fg_{\0})}(X(\lambda)).
\]

The paper is organized as follows. In \cref{S:preliminaries}, we set up the basic conventions for classical Lie superalgebras 
and in particular the Lie superalgebra $\mathfrak{gl}(m|n)$. We then introduce a support variety theory and relate this to the calculation of complexity for modules over the ``parabolic'' subalgebras of $\mathfrak{gl}(m|n)$ which are in turn 
later used (in \cref{S:KacComplexity})  with Serganova's equivalences between blocks of $\F$ to determine a lower bound on the complexity of Kac modules. In order to establish the 
upper bound we invoke results on the dimensions of projective modules in ${\mathcal F}$ developed in 
\cref{S:projmodulebound}. In \cref{S:KacComplexity}, we establish the aforementioned formula on the complexity of Kac (and dual Kac) modules. 

The remainder of the paper is devoted to computing the complexity of simple modules in ${\mathcal F}$. This computation is much more complicated because there is no known support variety theory for modules which measures complexity for modules in ${\mathcal F}$. In \cref{S:polytope}, we begin by establishing a lower bound on the dimension of projective indecomposable modules 
by using Ehrhart's theorem on counting lattice points in a polytope. The establishment of this bound also uses a combinatorial bijection on highest weights introduced by Su and Zhang.  Next, in \cref{S:simplecomplexity}, Serganova's recent verification of the generalized 
Kac-Wakimoto Conjecture for $\gl(m|n)$ is employed to reduce to a specific simple $\gl (m|n)$-module of atypicality $k$.  We then 
apply Brundan's deep results on the  characters and extensions of simple modules in ${\mathcal F}$ which prove that 
${\mathcal F}$ is a highest weight category having a Kazhdan-Lusztig theory to estimate the upper and lower bound 
for the complexity of simples via properties of Kazhdan-Lusztig polynomials. These results in conjunction with our established 
results in \cref{S:polytope} allow us to complete the calculation. Finally in \cref{S:zcomplexity}, we introduce a new 
numerical invariant of ${\mathcal F}$ which remains the same under equivalence of categories.  We also give evidence that this invariant is closely related to a detecting subalgebra of $\gl (m|n)$ previously introduced by the authors. 

We acknowledge Zongzhu Lin for a useful discussion which led to the results in the last section of the paper.

\section{Preliminaries} \label{S:preliminaries}

\subsection{Classical Lie Superalgebras} 

We will use the notation and conventions developed in 
\cite{BKN2, BKN1}. For more details we refer the reader to \cite[Section 2.1]{BKN2}. 

We will work over the complex numbers $\C$ throughout this paper. Let ${\mathfrak g}$ be a 
\emph{Lie superalgebra}; that is, a $\Z_{2}$-graded vector space 
$\g=\g_{\0}\oplus \g_{\1}$ with a bracket operation $[\;,\;]:\g \otimes \g \to \g$ which preserves the 
$\Z_{2}$-grading and satisfies graded versions of the usual Lie bracket axioms.  The subspace 
$\g_{\0}$ is a Lie algebra under the bracket and ${\mathfrak g}_{\1}$ is a $\g_{\0}$-module. A finite dimensional Lie superalgebra 
${\mathfrak g}$ is called \emph{classical} if there is a connected reductive algebraic group $G_{\0}$ such 
that $\operatorname{Lie}(G_{\0})=\g_{\0},$ and an action of $G_{\0}$ on $\g_{\1}$ which differentiates to 
the adjoint action of $\g_{\0}$ on $\g_{\1}$.\footnote{Unlike in Kac's original definition \cite{Kac1}, we do \emph{not} require a classical Lie superalgebra to be simple.} 
If $\g$ is classical, then $\g_{\1}$ is semisimple as a $\g_{\0}$-module.  
A \emph{basic classical} Lie superalgebra is a classical Lie superalgebra with a nondegenerate invariant supersymmetric 
even bilinear form (cf.\ \cite{Kac1}).  

Let $U(\g)$ be the universal enveloping superalgebra of ${\mathfrak g}$. The objects of the category of $\g$-supermodules 
are all $\Z_{2}$-graded left $U(\g)$-modules.  To describe the morphisms we first recall that if $M$ and $N$ are $\Z_{2}$-graded, then $\Hom_{\C}(M,N)$ is naturally 
$\Z_{2}$-graded by setting $\p{f}=r \in \Z_{2}$ if $f(M_{i}) \subseteq N_{i+r}$ for $i \in \Z_{2}$.  Here and elsewhere we write $\p{v} \in \Z_{2}$ for the degree of a homogeneous element $v$ of a $\Z_{2}$-graded vector space.  
We use the convention that we only state conditions for a homogenous element, with the general case given by linearity.  For $\fg$-supermodules $M$ and $N$ a homogeneous $\fg$-morphism $f : M \to N$ is a homogeneous linear map which satisfies 
\[
f(xm)=(-1)^{\p{f} \; \p{x}}xf(m)
\] for all homogeneous $x \in \fg$.  
Given ${\mathfrak g}$-supermodules $M$ and $N$ one can use the 
antipode and coproduct of $U({\mathfrak g})$ to define a ${\mathfrak g}$-supermodule 
structure on the contragradient dual $M^{*}$ and the tensor product $M\otimes N$. 

A supermodule is 
\emph{finitely semisimple} if it decomposes into a direct sum of finite dimensional simple supermodules.  
We write $\F = \F_{(\fg , \fg_{\0})}$ for the full subcategory of all finite dimensional $\fg$-supermodules which are finitely semisimple when viewed as $\fg_{\0}$-supermodules by restriction.  
As only supermodules will be considered in this paper, we will from now on use the term ``module" with the understanding that the prefix ``super'' is implicit.

\subsection{Complexity} Let $\{{V}_{t} \mid t\in {\mathbb N}\}=
\{{V}_{\bullet}\}$ be a sequence of finite dimensional ${\mathbb C}$-vector spaces. The 
\emph{rate of growth} of ${V}_{\bullet}$, $r({V}_{\bullet})$, is the smallest nonnegative integer $c$ such that 
there exists a constant $C>0$ with $\dim V_{t}\leq C\cdot t^{c-1}$ for all $t$. If no such integer exists then ${V}_{\bullet}$ is said to have \emph{infinite rate of 
growth}. Let $M\in {\mathcal F}$ and $P_{\bullet}\twoheadrightarrow M$ be a minimal projective resolution for $M$. Following 
Alperin \cite{Al}, we define the \emph{complexity} of $M$ to be $c_{\mathcal F}(M):=r(P_{\bullet} )$.  The following theorem was 
proved by the authors in \cite[Proposition 2.8.1]{BKN3} and provides a characterization of the complexity via rates of growth of extension groups in ${\mathcal F}$. 
This characterization will be important for our computational purposes. 

\begin{prop}\label{P:complexityext}
Let $\fg$ be a classical Lie superalgebra, and let $M$ be an object in $\F$. Then
$$
c_{\F}(M)= r\left( \Ext^{\bullet}_{(\g,\g_{\0})}(M,\bigoplus  S^{\,\dim P(S)})  \right)
$$
where the sum is over all simple modules $S$ in $\F$, and $P(S)$ is the projective cover of $S$.
\end{prop}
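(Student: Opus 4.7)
The plan is to establish equality term-by-term between the dimensions of the modules in a minimal projective resolution and the dimensions of the prescribed Ext groups; equality of rates of growth is then immediate. Let $P_\bullet \twoheadrightarrow M$ be a minimal projective resolution of $M$ in $\F$, so that by definition $c_\F(M) = r(P_\bullet)$, the rate of growth of the sequence $(\dim P_t)_{t\geq 0}$. It therefore suffices to prove that $\dim P_t = \dim \Ext^t_{(\fg,\fg_\0)}\bigl(M, \bigoplus_S S^{\dim P(S)}\bigr)$ for every $t \geq 0$, where $S$ runs over a complete set of isomorphism class representatives of simple modules in $\F$.

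Using the Krull--Schmidt property of $\F$, each projective $P_t$ decomposes as $P_t \cong \bigoplus_S P(S)^{m_{t,S}}$ for non-negative integer multiplicities $m_{t,S}$, so that $\dim P_t = \sum_S m_{t,S}\dim P(S)$. The crucial identification is $m_{t,S} = \dim \Ext^t_{(\fg,\fg_\0)}(M,S)$ for every simple $S$. To see this I would compute $\Ext^t(M,S)$ from the chosen resolution: minimality of $P_\bullet$ forces every differential $d_{t+1} \colon P_{t+1} \to P_t$ to have image inside $\Rad(P_t)$, so the induced differentials in the complex $\Hom_\F(P_\bullet, S)$ all vanish. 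Thus $\Ext^t_{(\fg,\fg_\0)}(M,S) \cong \Hom_\F(P_t, S)$, and since $P(S') \twoheadrightarrow S'$ is a projective cover, $\dim \Hom_\F(P(S'), S)$ is $1$ if $S' \cong S$ and $0$ otherwise, yielding $\dim \Hom_\F(P_t, S) = m_{t,S}$.

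Combining these steps gives
\[
\dim \Ext^t_{(\fg,\fg_\0)}\Bigl(M, \bigoplus_S S^{\dim P(S)}\Bigr) = \sum_S \dim P(S) \cdot \dim \Ext^t(M,S) = \sum_S m_{t,S} \dim P(S) = \dim P_t,
\]
so the two sequences of dimensions agree and hence have the same rate of growth. The only point that requires care is verifying in the $\Z_2$-graded setting that $\dim \Hom_\F(P(S), S) = 1$ and that the Krull--Schmidt decomposition of projectives in $\F$ is indexed (up to parity shift) by the simples; this rests on $\F$ being finite-dimensional and self-injective as established in \cite{BKN3}. Once parity conventions for $P(S)$ and $\Hom_\F$ are pinned down, the argument is the standard bookkeeping from the finite group scheme or finite-dimensional algebra setting.
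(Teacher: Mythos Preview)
The paper does not give a proof of this proposition; it simply cites \cite[Proposition~2.8.1]{BKN3}. Your argument is the standard one for such statements and is correct: the identity $\dim \Ext^{t}_{\F}(M,S) = [P_{t}:P(S)]$ for a minimal projective resolution is exactly what the result encodes, and summing against $\dim P(S)$ recovers $\dim P_{t}$. This is almost certainly the same approach taken in \cite{BKN3}.

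Two small points worth tightening. First, the direct sum $\bigoplus_{S} S^{\dim P(S)}$ is infinite and hence not literally an object of $\F$; the expression is shorthand for $\sum_{S}\dim P(S)\cdot\dim\Ext^{t}(M,S)$, and you should note that this sum is finite for each $t$ because $P_{t}$ is finite dimensional and so has only finitely many indecomposable summands. Second, your caveat about the $\Z_{2}$-graded conventions is well placed: in this paper the morphism spaces are $\Z_{2}$-graded (odd maps are allowed), so one must check that the bookkeeping with parity shifts $S\mapsto\Pi S$ does not spoil the count $\dim\Hom_{\F}(P(S'),S)=\delta_{S,S'}$. Once the set of ``simple modules $S$ in $\F$'' is interpreted consistently with the choice of projective covers, the count goes through; these conventions are fixed in \cite{BKN3}.
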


Note that $\Ext^{\bullet}_{(\fg , \fg_{\0})}(M,N)$ denotes relative cohomology for the pair $(\fg , \fg_{\0})$. When both $M$ and $N$ are objects of $\F_{(\fg, \fg_{\0})}$, then by \cite[Theorem 2.5.1]{BKN1} we have  
\begin{equation}\label{E:relcohom}
\Ext^{d}_{(\fg , \fg_{\0})}(M,N) \cong \Ext^{d}_{\F_{(\fg , \fg_{\0})}}(M,N)
\end{equation}
for all $d \geq 0$.

\subsection{Type I Lie Superalgebras} A Lie superalgebra is said to be of {\em Type I} if
it admits a $\Z$-grading ${\mathfrak g}=\g_{-1}\oplus {\g}_{0}\oplus {\g}_{1}$ concentrated
in degrees $-1,$ $0,$ and $1$ with ${\mathfrak g}_{\0}={\mathfrak g}_{0}$ and
${\mathfrak g}_{\1}=\g_{-1}\oplus {\g}_{1}$ and if the bracket respects this grading. Otherwise, ${\mathfrak g}$ is of Type II.
Examples of Type I Lie superalgebras include: $\mathfrak{gl}(m|n)$ and the simple Lie superalgebras of
types $A(m,n)$, $C(n)$ and $P(n)$.

The simple modules for ${\mathfrak g}$, a Type I classical Lie superalgebra, can be constructed in the
following way. Let ${\mathfrak t}$ be a Cartan subalgebra of ${\mathfrak g}_{0}$ and
$X^{+}_{0} \subseteq {\mathfrak t}^{*}$ be the set of dominant integral weights for $\fg_{0}$ (with
respect to a fixed Borel subalgebra of $\fg_{0}$).  For $\lambda \in X^{+}_{0},$ let $L_{0}(\lambda)$
be the simple finite dimensional ${\mathfrak g}_{0}$-module of highest weight $\lambda$. Set
$$
{\mathfrak p}^{+}= {\mathfrak g}_{0} \oplus {\mathfrak g}_{1}
\qquad \text{and} \qquad  {\mathfrak p}^{-}= {\mathfrak g}_{0} \oplus {\mathfrak g}_{-1} .
$$
Since ${\mathfrak g}$ is a Type I Lie superalgebra ${\mathfrak g}_{\pm 1}$ is an abelian
ideal of ${\mathfrak p}^{\pm}$.  We can therefore
view $L_{0}(\lambda)$ as a simple ${\mathfrak p}^{\pm}$-module via inflation.  In this way we obtain a complete set of finite dimensional 
simple modules for $\fp^{\pm}$.

For each $\lambda\in X^{+}_{0}$, we construct the \emph{Kac module} $K(\la)$ and the \emph{dual Kac module} $K^{-}(\la)$
by using the tensor product and the Hom-space in the following way:
$$
K^{+}(\lambda):=K(\lambda):=U({\mathfrak g})\otimes_{U({\mathfrak p}^{+})} L_{0}(\lambda)
\qquad \text{and} \qquad K^{-}(\lambda):=
\Hom_{U({\mathfrak p}^{-})}\left(U({\mathfrak g}), L_{0}(\lambda) \right).
$$

The module $K(\lambda)$ has a unique maximal submodule. The head of $K(\lambda)$ is the simple
finite dimensional ${\mathfrak g}$-module $L(\lambda)$. Then $\{L(\lambda) \mid \lambda \in X_{0}^{+}\}$
is a complete set of non-isomorphic simple modules in $\mathcal{F}_{(\g,\g_{0})}$.  Let $P(\lambda)$
(resp.\ $I(\lambda)$) denote the projective cover 
(resp.\ injective hull) in $\mathcal{F}_{(\g,\g_{0})}$ for the
simple ${\mathfrak g}$-module $L(\lambda)$. These are all finite dimensional. Moreover, the projective
covers admit filtrations with sections being Kac modules and the injective hulls
have filtrations whose sections are dual Kac modules. These filtrations also respect the dominance
ordering on weights and thus ${\mathcal F}_{(\g,\g_{0})}$ is a highest 
weight category (cf.\ \cite[Section 3]{BKN3}) as defined in
\cite{CPS1}.

\subsection{\texorpdfstring{The Lie Superalgebra $\gl (m|n)$}{The Lie Superalgebra gl(m|n)}}\label{SS:glmn}  The standard example of a Type I classical Lie superalgebra  
is $\g=\mathfrak{gl}(m|n)$. As a vector space $\g$ is the set of $m+n$ by $m+n$ matrices, and 
one may take the matrix units $E_{i,j}$ where $1\leq i,j \leq m+n$ as a basis. The even component
${\mathfrak g}_{\0}$ is the span of $E_{i,j}$ where $1\leq i,j  \leq m$ or $m+1 \leq i,j \leq m+n$.   A basis for ${\mathfrak g}_{\1}$ is given by the $E_{i,j}$ such that $m+1\leq i \leq m+n$ and $1\leq j \leq n$ or $1\leq i \leq m$ and $m+1 \leq j \leq m+n$. 
As a Lie algebra ${\mathfrak g}_{\0}\cong \mathfrak{gl}(m)\times \mathfrak{gl}(n)$, and
the corresponding reductive group is $G_{\0}\cong GL(m)\times GL(n)$. Note that $G_{\0}$ acts on
$\mathfrak g_{\1}$ via the adjoint representation. As $\gl (m|n) \cong \gl (n|m)$ and $\gl (m|0) =\gl (m|0)_{\0}$, we may assume without loss that $m \geq n \geq 1$.

Observe that $\fg$ has a $\Z$-grading given by setting $\fg_{0}=\fg_{\0}$ and ${\mathfrak g}_{-1}$ (resp.\ ${\mathfrak g}_{1}$) equal to the lower triangular matrices (resp.\ upper triangular matrices) in ${\mathfrak g}_{\1}$.  In particular, ${\mathfrak g}_{\1}={\mathfrak g}_{-1}\oplus {\mathfrak g}_{1}$.  Furthermore, note that the bracket respects the $\Z$-grading.


We will now establish some basic notation involving the root datum of $\fg$ which will be used later.  Let $\ft$ be the Cartan subalgebra of $\fg$ of all diagonal matrices and let $\fb$ be the Borel subalgebra of all upper triangular matrices. Then $\ft$ is a Cartan subalgebra and $\fb_{0}:=\fb \cap \fg_{0}$ is a Borel subalgebra for $\fg_{0}$.  With respect to these choices we can make the root system and $X_{0}^{+}$ explicit as follows.  For $i=1, \dotsc , m+n$, let $\varepsilon_{i}: \ft \to \C$ be the linear functional which picks out the $i$th diagonal entry. With respect to this basis we define a bilinear form on $\ft$ by 
\[
(\varepsilon_{i}, \varepsilon_{j}) = \begin{cases} \delta_{i,j}, &1\le i,j \le m; \\
                                                          -\delta_{i,j}, &m+1 \le i,j \le m+n; \\
                                                          0, &\text{otherwise}.
\end{cases} 
\]

As with $\gl (m+n)$, the set 
\begin{equation*}
\Phi =\left\{\varepsilon_{i}-\varepsilon_{j} \mid 1 \leq i,j \leq m+n, i \neq j \right\}
\end{equation*} is the set of roots for $\fg$ and 
\[
\Phi^{+}  =\left\{\varepsilon_{i}-\varepsilon_{j} \mid 1 \leq i<j \leq m+n \right\}
\] is the set of positive roots.  The set 
\[
\Phi_{\0} = \left\{ \varepsilon_{i} - \varepsilon_{j} \mid 1 \leq i \neq j \leq m \text{ or } m+1 \leq  i\neq j \leq m+n \right\}
\] is the the set of \emph{even} roots.  The \emph{odd} roots are then $\Phi_{\1} = \Phi \backslash \Phi_{\0}$.  We set $\Phi_{\0 }^{+} = \Phi^{+}\cap \Phi_{\0 }$ and  $\Phi_{\1}^{+} = \Phi^{+}\cap \Phi_{\1}$.
With respect to our choices, 
\begin{equation*}
X^{+}_{0}= \left\{\lambda = \sum_{i=1}^{m+n} \lambda_{i}\varepsilon_{i} \mid \lambda_{i} - \lambda_{i+1} \in \Z_{\geq 0} \text{ for } i\neq m, m+n \right\}.
\end{equation*}  As we discuss below, without loss we can and will assume $\lambda_{i} \in \Z$ for $i=1, \dotsc, m+n$.

Let  
\[
\Phi_{m}^{+} = \left\{\varepsilon_{i}-\varepsilon_{j} \mid 1 \leq i<j \leq m \right\}
\]
be the set of positive roots for the subalgebra of $\fg_{0}$ isomorphic to $\gl(m)$,  Similarly, let 
\[
\Phi_{n}^{+}=\left\{\varepsilon_{i}-\varepsilon_{j} \mid m+1 \leq i<j \leq m+n \right\}
\]
be the set of positive roots for the subalgebra of $\fg_{0}$ isomorphic to $\gl(n)$.

Now define 
\begin{align*}
\rho &= m\varepsilon_{1} + (m-1)\varepsilon_{2}+ \dotsb +\varepsilon_{m} - \varepsilon_{m+1} - 2\varepsilon_{m+2}-\dotsb -n\varepsilon_{m+n},  \\
\rho_{m} &= m\varepsilon_{1} + (m-1)\varepsilon_{2}+ \dotsb +\varepsilon_{m},\\ 
\rho_{n} &= - \varepsilon_{m+1} - 2\varepsilon_{m+2}-\dotsb -n\varepsilon_{m+n}. 
\end{align*}
Then $\rho = \rho_{m}+\rho_{n}$ and the elements $\rho$, $\rho_{m}$, $\rho_{n}$ are each a constant shift of the analogous elements defined via half sums of positive roots.  This shift has no effect on the contents of this paper so we choose to use the more convenient elements defined above.

Given $\lambda \in X_{0}^{+}$, we define the \emph{atypicality} of $\lambda$, $\atyp (\lambda)$, to be the maximal number of pairwise orthogonal elements of $\Phi^{+}$ which are also orthogonal to $\lambda + \rho$.  The atypicality is an integer ranging 
from $0, \dotsc , \min(m,n)$.  If $L(\lambda)$ is a simple $\fg$-module of highest weight $\lambda$, then we define $\atyp \left( L(\lambda)\right) := \atyp (\lambda)$.  It is known that the atypicality of a simple module is independent of the choice of Cartan and Borel subalgebras and, furthermore, is the same for all simple modules in a given block.  Hence it makes sense to refer to the atypicality of a block.

If $\lambda = \sum_{i=1}^{m+n}\lambda_{i}\varepsilon_{i} \in X_{0}^{+}$ has atypicality zero, then by \cite[Theorem 1]{Kacnote} $P(\lambda)=K(\lambda)=L(\lambda)$ and, in particular, $K(\lambda)$ and $L(\lambda)$ have complexity zero, which is 
consistent with \cref{E:Kaccomplexity} and \cref{E:simplecomplexity}.  If $\lambda$ has atypicality greater than zero, then since $\gl (m|n)$ has the one-dimensional Berezinian representation of weight $\varepsilon_{1}+\dotsb +\varepsilon_{m} - \varepsilon_{m+1}- \dotsb - \varepsilon_{m+n}$ we may tensor by a suitable one-dimensional representation (doing so clearly preserves complexity) and assume that that $\lambda_{1}, \dotsc , \lambda_{m+n}$ are integers.  Therefore, without loss we always assume $\atyp (\lambda)\geq 1$ and elements of $X_{0}^{+}$ have integral coefficients.

\subsection{}\label{SS:bruhat}  Let ${\mathcal F}=\F_{(\gl (m|n), \gl (m|n)_{0})}$. Serganova provided a convenient combinatorial description of the blocks of ${\mathcal F}$ which we now recall.  
Given a simple $\fg$-module $L(\lambda)$ with $\atyp (\lambda) =k$, there exists a (unique) set of $k$ positive odd pairwise orthogonal roots 
\begin{equation}\label{E:placesofatypicality}
\Omega := \{\varepsilon_{i_{t}}-\varepsilon_{j_{t}} \mid t =1, \dotsc , k \}
\end{equation}
such that $(\la + \rho, \varepsilon_{i_{t}}-\varepsilon_{j_{t}})=0$ for all $t = 1, \dotsc , k$ and where $1 \leq i_{1}, \dotsc , i_{k} \leq m$ and $m+1 \leq j_{1}, \dotsc , j_{k} \leq m+n$.  The \emph{core} of $\lambda$ is the pair of multisets
\begin{multline}
\left(\ \{(\lambda +\rho , \varepsilon_{s}) \mid s \in \{1, \dotsc , m \} \backslash \{i_{1}, \dotsc , i_{k} \}  \}, \right. \\  
\left. \{(\lambda +\rho , \varepsilon_{s}) \mid s \in \{m+1, \dotsc , m+n \} \backslash \{j_{1}, \dotsc , j_{k} \}  \}\ \right).
\end{multline} 
We then have the following description of the blocks of $\F$.

\begin{prop}[\cite{serganova3}]\label{P:serganovasblockdescription}  If $L(\lambda)$ and $L(\mu)$ are two simple modules in $\mathcal{F}$ then $L(\lambda)$ and $L(\mu)$ lie in the same block if and only if $atyp(\lambda) = \atyp (\mu)$ and the core of $\lambda$ equals the core of $\mu$.
\end{prop}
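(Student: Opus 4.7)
The plan is to split the equivalence into its two directions, leveraging the standard fact that distinct blocks of $\F$ are separated by distinct central characters of $U(\fg)$.

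For the forward implication, suppose $L(\lambda)$ and $L(\mu)$ lie in the same block of $\F$. Then they must share a central character. The Harish-Chandra isomorphism for $\gl(m|n)$, due to Kac and Sergeev, identifies central characters of $U(\fg)$ with suitably symmetric supersymmetric functions of $\lambda + \rho$. Unwinding this identification shows that two central characters coincide precisely when the atypicalities agree and the multisets of non-atypical coordinates of $\lambda+\rho$ and $\mu+\rho$ agree, that is, when the cores as defined above agree. This direction is therefore reduced to a direct manipulation of the known formula for the central character.

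For the converse, one must produce actual linkage in $\F$ between any two simples sharing atypicality and core. The first step is combinatorial: weights of fixed atypicality $k$ and fixed core are parameterized by the positions $(i_{1}, \dotsc, i_{k}; j_{1}, \dotsc, j_{k})$ of the atypical pairs from \cref{E:placesofatypicality}, and any two such parameterizations can be connected by a sequence of ``elementary moves'' in which a single atypical pair is shifted to a combinatorially adjacent pair of vacant slots without disturbing the core. This is a routine combinatorial connectedness argument on the lattice of admissible positions.

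The main obstacle, and the heart of the argument, is the realization of each elementary move by an honest linkage in $\F$. The tool is the translation functor obtained by tensoring with the natural representation $V = \C^{m|n}$ (and its dual) and projecting onto the appropriate block component. For a carefully chosen weight $\nu$ whose atypical pattern sits on the boundary between ``before'' and ``after'' an elementary move, one shows that $K(\nu)$ (or an appropriate translate) has composition factors at both the simple before and the simple after the move. Since every Kac module has all of its composition factors in a single block, this provides the required link. Concatenating the links produced by a chain of elementary moves yields that $L(\lambda)$ and $L(\mu)$ lie in the same block whenever they share atypicality and core, completing the converse.
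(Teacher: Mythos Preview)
The paper does not supply a proof of this proposition at all: it is simply quoted with the citation \cite{serganova3}, and the authors use it as a black box throughout. There is therefore no ``paper's own proof'' to compare your proposal against.

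That said, your sketch is a reasonable outline of how the result is actually established in the literature. The forward direction is exactly as you describe: simples in the same block share a central character, and the Kac--Sergeev description of the center of $U(\gl(m|n))$ shows that the central character is determined by, and determines, the pair (atypicality, core). For the converse, your strategy of connecting two configurations with the same core by a chain of elementary moves, and then realizing each move by exhibiting an indecomposable module (a Kac module, or a translate of one by $V$ or $V^{*}$) with composition factors on both sides, is precisely Serganova's argument. One point to be careful about: the claim that ``$K(\nu)$ has composition factors at both the simple before and the simple after the move'' is the genuinely nontrivial step, and in practice requires either an explicit computation of the socle filtration of a Kac module or a careful analysis of what tensoring with $V$ does to a simple; you have correctly flagged this as the heart of the argument but have not actually carried it out. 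As a proof \emph{proposal} this is fine, but if you were writing a self-contained proof you would need to fill this in.
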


Given a block $\B $ of $\F$, we will abuse notation slightly by writing $\lambda \in \B $ to mean that the simple module $L(\lambda)$ lies in the block $\B$.  For example, we write $\B _{0}$ for the principal block of $\F$ and so, by definition, $0 \in \B _{0}$.

Given $\lambda = \sum_{i=1}^{m+n}\lambda_{i}\varepsilon_{i} \in X^{+}_{0}$ with $\atyp (\lambda) = k$ we set $\lambda^{+}=\sum_{i=1}^{m}\lambda_{i}\varepsilon_{i}$ and define two length functions as follows.  The ``naive'' length function is given by 
\[
|\lambda| = \lambda_{1}+\dotsb +\lambda_{m}.
\] The other length function is given by 
\[
l(\lambda) = k(k+1)/2+\sum_{\alpha \in \Omega} (\lambda^{+}+\rho_{n}, \alpha).
\]  By \cite[Remark 3.4]{suzhang} the function $l$ defines a length function in the sense of \cite{brundan1}. 
Note that if $\lambda \in \B _{0}$ for $\gl (k|k)$, then $l(\lambda) = |\lambda|$. 

For $\lambda,\mu \in \B $, we write $\lambda \leq \mu$ if $\mu - \lambda$ is a sum of positive roots (i.e., the usual dominance order).  We write $\lambda \preccurlyeq \mu$ for the Bruhat order of \cite{brundan1}.  Note that if $\lambda = \sum \lambda_{i}\varepsilon_{i}, \mu = \sum \mu_{i}\varepsilon_{i} \in \B _{0}$ for $\gl (k|k)$, then $\lambda \preccurlyeq \mu$ if and only if $\lambda_{i} \leq \mu_{i}$ for $i=1, \dotsc, m$.\footnote{Note that the bilinear form used by Brundan is the negative of the one used here.}

Given a highest weight $\lambda$ with $\atyp (\lambda) = k$, let $i_{1}, \dotsc , i_{k}, j_{1}, \dotsc , j_{k}$ be as in \cref{E:placesofatypicality}. We partition the elements of $\Phi_{m}^{+}$ into three sets as follows.  Let
\begin{align*}
A_{m} &= \left\{\alpha= \varepsilon_{s}-\varepsilon_{t} \in \Phi_{m}^{+} \mid |\{s,t \}\cap \{i_{1}, \dotsc , i_{k} \}| = 0 \right\}, \\
B_{m} &= \left\{\alpha= \varepsilon_{s}-\varepsilon_{t} \in \Phi_{m}^{+} \mid  |\{s,t \}\cap \{i_{1}, \dotsc , i_{k} \}| = 1 \right\}, \\
C_{m} &= \left\{\alpha= \varepsilon_{s}-\varepsilon_{t} \in \Phi_{m}^{+} \mid |\{s,t \}\cap \{i_{1}, \dotsc , i_{k} \}| = 2 \right\}. 
\end{align*} These sets obviously depend on $\lambda$.  When appropriate we write $A_{m}(\lambda)$, $B_{m}(\lambda)$, etc.\  to remind the reader of these dependencies.  Observe that $\Phi_{m}^{+} = A_{m}\sqcup B_{m} \sqcup C_{m}$.
Define $A_{n}, B_{n}, C_{n}$ analogously by replacing $\Phi_{m}^{+}$ with $\Phi_{n}^{+}$ and $\{i_{1}, \dotsc , i_{k} \}$ with $\{j_{1}, \dotsc , j_{k} \}$.

\subsection{}\label{SS:geometry}  As it will be needed in what follows, we briefly review the geometric structure of $\fg_{1}$ for $\gl (m|n)$.  Note that $G_{0}$ acts on the variety $\fg_{1}$ via the adjoint action.   Namely, in the matrix realization of $\fg$ given in \cref{SS:glmn} 
the action of $G_{0}\cong GL(m)\times GL(n)$ on $\fg_{1}$ is given by $(A,B)\cdot x = AxB^{-1}$ for $A\in GL(m),\ B\in GL(n),\ x\in \fg_{1}$. 

The $G_{0}$-orbit structure of $\fg_{1}$ is given as follows.  The orbits are 
\[
(\fg_{1})_{r} = \{\, x\in\fg_{1}\mid \rk(x)=r\,\}
\]
for
$0\le r\le \min(m,n)$ and, in particular, we have
\[
(\fg_{1})_{r} = G_{0}.x_{r},
\] where $x_{r}$ is any fixed matrix of rank $r$.

 The closure of $(\fg_{1})_{r}$ is
\[
\overline{(\fg_{1})_{r}} = \{\, x\in\fg_{1}\mid \rk(x)\le r\,\};
\]
thus $(\fg_{1})_{r} \subset \overline{(\fg_{1})_{s}}$ if and only if $r \le s$.  Hence, the graph (Hasse diagram) which describes the partial ordering given by inclusion of orbit closures is a simple chain.  

For $M  \in \F$, let $\V_{\fg_{1}}(M)$ denote the support variety of $M$ as defined in \cref{SS:g1supports}.  In particular, $\V _{\fg_{1}}(\C ) = \fg_{1}$.  It follows, since $\V_{\fg_{1}}(M)$ is a closed $G_{0}$-invariant subvariety of $\V_{\fg_{1}}(\C)$, that
$\V_{{\mathfrak g}_{1}}(M)=\overline{(\fg_{1})_{r}}$ for some $r$. 
So $\V_{{\mathfrak g}_{1}}(M)$ is always irreducible and can be computed by applying the rank variety description to a representative from each of the $\min(m,n)$ orbits. Note that a similar 
description of $G_{0}$-orbits on ${\mathfrak g}_{-1}$ also holds.

\subsection{Example}\label{SS:gl11example}  Let ${\mathfrak g}=\mathfrak{gl}(1|1)$ and ${\mathcal F}={\mathcal F}_{(\g,\g_{0})}$. 
The simple modules in the principal block ${\mathcal B}_{0}$ are one dimensional and indexed by $L(\lambda\varepsilon_{1}-\lambda\varepsilon_{2} )$ where $\lambda\in {\mathbb Z}$. 
In \cite{BKN3}, we proved that $c_{\mathcal F}(L(\lambda\varepsilon_{1}-\lambda\varepsilon_{2}))=2$ for all $\lambda\in {\mathbb Z}$ by constructing an explicit minimal projective resolution. However, the relative cohomology 
ring $\HH^{\bullet}({\mathfrak g},{\mathfrak g}_{0};{\mathbb C})$ has Krull dimension one, and so is not large enough to use to construct a support 
variety theory which measures the complexity. 

In this case one can consider the subalgebra $\f\cong \mathfrak{sl}(1|1)$ in $\fg$ as defined in \cref{SS:zcomplexity}. The Krull dimension of 
$\HH^{\bullet}(\ff,\ff_{0};\C)$ is two. In \cref{SS:complexity}, we will show that 
one can define a support variety theory for $\ff$ which measures the complexity of modules for $\gl(1|1)$. 
This construction does not easily generalize to $\gl(m|n)$ when $m,n>1$. It remains an open question as to whether there exists 
a theory of varieties for modules for these classical Lie superalgebras which can be used to compute the rate of growth of projective resolutions.

\section{Support Varieties} \label{S:supportvarieties}

\subsection{} In \cite{BKN1} the authors showed that for a classical Lie superalgebra $\fg$ the relative cohomology ring for the pair $({\mathfrak g},{\mathfrak g}_{\0})$ is 
finitely generated.   We then used this ring to construct a support variety theory for objects of $\F_{(\fg , \fg_{\0})}$. These 
varieties provide an important geometric interpretation for the atypicality of simple ${\mathfrak g}$-modules. In this section, for Type I Lie superalgebras, 
we will prove properties about support varieties for ${\mathfrak g}_{\pm 1}$ (resp.\  the pair $({\mathfrak p}^{\pm},{\mathfrak g}_{0})$) 
which will be used to measure the complexity for  ${\mathfrak g}_{\pm 1}$-modules (resp.\  modules in ${\mathcal F}_{({\mathfrak p}^{\pm},{\mathfrak g}_{0})}$). 
The results in this section will later be used to compute the complexity of Kac modules in ${\mathcal F}_{({\mathfrak g},{\mathfrak g}_{0})}$. 
 
Let $\mathfrak{g}$ be a classical Lie superalgebra,
$R:=\operatorname{H}^{\bullet}(\mathfrak{g}, \mathfrak{g}_{\0};{\mathbb C})$, and $M_{1}, M_{2}$
be in ${\mathcal F}:={\mathcal F}_{(\g,\g_{\0})}$. According to \cite[Theorem 2.5.3]{BKN1},
$\Ext_{\mathcal{F}}^{\bullet}(M_{1},M_{2})$ is a finitely generated $R$-module.
Set $J_{({\mathfrak g},{\mathfrak g}_{\0})}(M_{1},M_{2}) :=
\operatorname{Ann}_{R}(\Ext_{\mathcal{F}}^{\bullet}(M_{1},M_{2}))$
(i.e., the annihilator ideal of this module).  The \emph{relative support variety of the pair $(M_{1},M_{2})$} is
\begin{equation}
\mathcal{V}_{(\mathfrak{g},\mathfrak{g}_{\0})}(M_{1},M_{2}) :=
\operatorname{MaxSpec}(R/J_{({\mathfrak g},{\mathfrak g}_{\0})}(M_{1},M_{2})).
\end{equation}

In the case when $M=M_{1}=M_{2}$, set $J_{(\g,\g_{\0})}(M)=J_{(\g,\g_{\0})}(M,M)$, and
$$\mathcal{V}_{(\g,\g_{\0})}(M):=\mathcal{V}_{(\g,\g_{\0})}(M,M).$$
The variety $\mathcal{V}_{(\mathfrak{g},\mathfrak{g}_{\0})}(M)$ is called the \emph{support variety} of $M$.
In this situation, $J_{(\g,\g_{\0})}(M)=\operatorname{Ann}_{R}\left(\operatorname{Id} \right)$ where $\operatorname{Id}$ is the identity morphism in $\Hom_{\mathcal F}(M,M)$.

\subsection{\texorpdfstring{The Case ${\mathfrak g}={\mathfrak g}_{\pm 1}$}{The Case g = g\_1 or g\_{-1}}}\label{SS:g1supports} Observe that for Type I Lie superalgebras both $\fg_{1}$ and $\fg_{-1}$ are abelian Lie superalgebras and, consequently, 
\[
R_{\pm} :=\HH^{\bullet} (\fg_{\pm 1}, \C ) = \HH^{\bullet}(\fg_{\pm 1}, \{0 \};\C) \cong S(\fg_{\pm 1}^{*})
\] as graded algebras.  Let $\mathcal{F}(\fg_{\pm 1})$ be the category of finite dimensional $\fg_{\pm 1}$-modules.  If $M$ is an object in $\mathcal{F}(\fg_{\pm 1})$, then one can define the $\fg_{\pm 1}$ \emph{support variety} of $M$, 
\[
\V_{\fg_{\pm 1}}(M) = \V_{(\fg_{\pm 1},0)}(M).
\]  Since $\fg_{\pm 1}$ is abelian the arguments given in \cite[Section 5]{BKN1} for detecting subalgebras apply here as well and one has that $\V_{\fg_{\pm 1}}(M)$ is canonically isomorphic to the following rank variety: 
\[
\V_{\fg_{\pm 1}}^{\text{rank}}(M) :=\left\{ x \in \fg_{\pm 1} \mid M \text{ is not projective as a $U(\langle x \rangle)$-module} \right\} \cup \{ 0 \},
\] where $U(\langle x \rangle)$ denotes the enveloping algebra of the Lie subsuperalgebra generated by $x \in \fg_{\pm 1}$.  We will identify $\V_{\fg_{\pm 1}}(M)$ and $\V^{\text{rank}}_{\fg_{\pm 1}}(M)$ via this canonical isomorphism.  As a consequence of this alternate description $\V_{\fg_{\pm 1}}(M)$ satisfies the various properties of a rank variety (e.g., it satisfies the tensor product rule and detects $\fg_{\pm 1}$ projectivity; cf.\ \cite[Sections 5, 6]{BKN1}). 

\subsection{\texorpdfstring{The Case ${\mathfrak g}={\mathfrak p}^{\pm }$}{The Case g = p+ or p-}} For a Type I classical Lie superalgebra we show  
for modules in $\F=\F_{(\fp^{\pm},\fg_{0})}$ that the theory of support varieties for $\fg_{\pm 1}$, as presented in the previous 
section,  does measure complexity.  Recall that if $L_{0}(\lambda)$ is a finite dimensional simple $\fg_{0}$-module, then it is canonically a simple $\fp^{\pm}$-module via inflation.  Furthermore, as $\lambda$ ranges over $X_{0}^{+}$ this provides a complete, irredundant set of simple $\fp^{\pm}$-modules in $\F_{(\fp^{\pm},\fg_{0})}$. 

\begin{theorem} \label{T:psupports} Let $\fg$ be a Type I classical Lie superalgebra and let $M$ be a module in $\F=\F_{(\fp^{\pm},\fg_{0})}$. Then 
$$c_{\mathcal F}(M)=\dim {\mathcal V}_{{\mathfrak g}_{\pm 1}}(M)=\dim \V_{\fg_{\pm 1}}^{\rm{rank}}(M).$$ 
\end{theorem}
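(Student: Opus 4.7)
\medskip

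The plan is to combine the cohomological characterization of complexity in \cref{P:complexityext} with the reduction of relative cohomology for the pair $(\fp^{\pm},\fg_{0})$ to $G_{0}$-invariants of $\fg_{\pm 1}$-cohomology, and then to absorb those invariants by summing the contributions of \emph{all} simples weighted by their projective-cover dimensions.

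First I would identify the projectives in $\F=\F_{(\fp^{\pm},\fg_{0})}$. Since $\fp^{\pm} = \fg_{0}\ltimes \fg_{\pm 1}$ with $\fg_{\pm 1}$ an abelian ideal, Frobenius reciprocity shows that
\[
P(L_{0}(\lambda)) \;\cong\; U(\fp^{\pm})\otimes_{U(\fg_{0})} L_{0}(\lambda) \;\cong\; \Lambda(\fg_{\pm 1})\otimes L_{0}(\lambda)
\]
is the projective cover of $L_{0}(\lambda)$, with $\dim P(L_{0}(\lambda)) = 2^{\dim \fg_{\pm 1}}\dim L_{0}(\lambda)$. Plugging this into \cref{P:complexityext} expresses $c_{\F}(M)$ as the rate of growth of
\[
D(t) := \sum_{\lambda \in X_{0}^{+}} \dim P(L_{0}(\lambda)) \cdot \dim \Ext^{t}_{(\fp^{\pm},\fg_{0})}(M, L_{0}(\lambda)).
\]

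Next, because $\fg_{0}$ is reductive, the standard relative cohomology reduction (cf.\ \cite[Section~2]{BKN1}) gives $\Ext^{\bullet}_{(\fp^{\pm},\fg_{0})}(M,N) \cong [\Ext^{\bullet}_{\fg_{\pm 1}}(M,N)]^{G_{0}}$ for all $M, N \in \F$. Since $\fg_{\pm 1}$ acts trivially on each $L_{0}(\lambda)$, there is moreover a natural $\fg_{0}$-isomorphism $\Ext^{t}_{\fg_{\pm 1}}(M,L_{0}(\lambda)) \cong V_{t}\otimes L_{0}(\lambda)$, where $V_{t}:=\Ext^{t}_{\fg_{\pm 1}}(M,\C)$. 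Hence $\dim \Ext^{t}_{(\fp^{\pm},\fg_{0})}(M,L_{0}(\lambda)) = \dim \Hom_{G_{0}}(L_{0}(\lambda)^{*},V_{t})$. The $G_{0}$-module $V_{t}$ is finite-dimensional (since $\Ext^{\bullet}_{\fg_{\pm 1}}(M,\C)$ is finitely generated over the Noetherian ring $R_{\pm}$ with finite-dimensional graded pieces) and completely reducible (since $G_{0}$ is reductive), so the basic decomposition $\dim V_{t} = \sum_{\mu}\dim L_{0}(\mu)\cdot \dim \Hom_{G_{0}}(L_{0}(\mu),V_{t})$ yields
\[
D(t) \;=\; 2^{\dim \fg_{\pm 1}}\cdot \dim V_{t}.
\]
The prefactor is irrelevant for rates of growth, so $c_{\F}(M) = r(V_{\bullet})$, which equals the Krull dimension of $R_{\pm}/\operatorname{Ann}_{R_{\pm}}\Ext^{\bullet}_{\fg_{\pm 1}}(M,\C)$. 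Using the tensor-product property of $\fg_{\pm 1}$-support varieties together with $\V_{\fg_{\pm 1}}(\C)=\fg_{\pm 1}$, this quantity coincides with $\dim \V_{\fg_{\pm 1}}(M)$, and the final identification with $\V_{\fg_{\pm 1}}^{\text{rank}}(M)$ is the canonical isomorphism already recorded in \cref{SS:g1supports}.

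I expect the main subtlety to be the passage from $[\Ext^{\bullet}_{\fg_{\pm 1}}(M,N)]^{G_{0}}$ back to $\Ext^{\bullet}_{\fg_{\pm 1}}(M,N)$ inside a rate-of-growth calculation: taking $G_{0}$-invariants of a $G_{0}$-equivariant $R_{\pm}$-module can in general drop the Krull dimension, so the naive inequality $c_{\F}(M)\le \dim \V_{\fg_{\pm 1}}(M)$ is not obviously an equality. The multiplicity trick above sidesteps this by summing contributions from every simple weighted by its projective-cover dimension, thereby reconstituting the full $\dim V_{t}$ rather than a single $G_{0}$-isotypic component.
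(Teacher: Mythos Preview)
Your argument is correct and follows essentially the same route as the paper: both reduce $c_{\F}(M)$ via \cref{P:complexityext} to the rate of growth of $\Ext^{\bullet}_{\fg_{\pm 1}}(M,\C)$ by exploiting that $\fg_{\pm 1}$ acts trivially on every $L_{0}(\lambda)$ and that the weighting by $\dim P(L_{0}(\lambda))$ reconstitutes the full $G_{0}$-module $V_{t}$ rather than a single isotypic piece. The only cosmetic difference is bookkeeping: the paper packages $\bigoplus_{\lambda} L_{0}(\lambda)^{\dim P(\lambda)}$ as $k[G_{0}]\otimes\Lambda^{\bullet}(\fg_{\pm 1})$ and invokes Frobenius reciprocity for $\Ind_{1}^{G_{0}}\C$, whereas you do the Peter--Weyl count $\dim V_{t}=\sum_{\mu}\dim L_{0}(\mu)\cdot[V_{t}:L_{0}(\mu)]$ directly.

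One small imprecision: in your last step, the Krull dimension of $R_{\pm}/\operatorname{Ann}_{R_{\pm}}\Ext^{\bullet}_{\fg_{\pm 1}}(M,\C)$ is by definition $\dim\V_{\fg_{\pm 1}}(M,\C)$, not $\dim\V_{\fg_{\pm 1}}(M)$, and the ``tensor product property'' is not quite the right citation for identifying the two. The cleanest fix is the paper's: observe that $r(V_{\bullet})=c_{\F(\fg_{\pm 1})}(M)$ (since $\C$ is the unique simple $\fg_{\pm 1}$-module and its projective cover has fixed dimension), and then invoke \cite[Theorem~2.9.1]{BKN3}. Alternatively, since $\C$ is the only simple, the general identity $\V(M)=\bigcup_{S}\V(M,S)$ collapses to $\V_{\fg_{\pm 1}}(M)=\V_{\fg_{\pm 1}}(M,\C)$.
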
 

\begin{proof} Let $M$ be in ${\mathcal F}={\mathcal F}_{({\mathfrak p}^{\pm},{\mathfrak g}_{0})}$. According to 
\cref{P:complexityext}, it follows that 
$$
c_{\F}(M)=r\left(\Ext^{\bullet}_{(\fp^{\pm},\g_{0})}\bigg(M,\bigoplus_{\lambda\in X_{0}^{+}}  L_{0}(\lambda)^{\dim P(\lambda)}\bigg)\right). 
$$
In this instance the projective cover $P(\lambda)\cong U({\mathfrak p}^{\pm})\otimes_{U({\mathfrak g}_{0})}L_{0}(\lambda)$ in ${\mathcal F}$.
Set 
\[
L=\bigoplus_{\lambda\in X_{0}^{+}}  L_{0}(\lambda)^{\dim P(\lambda)}.
\]
 Observe that as a $G_{0}$-module, 
\begin{align}\label{E:Lcong}
L &\cong [\oplus_{\lambda\in X_{0}^{+}} L_{0}(\lambda)\otimes L_{0}(\lambda)^{*}]\otimes \Lambda^{\bullet}({\mathfrak g}_{\pm 1}) \notag \\
   & \cong  k[G_{0}]\otimes \Lambda^{\bullet}({\mathfrak g}_{\pm 1}), 
\end{align}
where in the first isomorphism the action of $G_{0}$ on $ \Lambda^{\bullet}({\mathfrak g}_{\pm 1})$ and on each $ L_{0}(\lambda)^{*}$ is trivial, and in the second isomorphism the action on $k[G_{0}]$ is by left translation and the action on $\Lambda^{\bullet}({\mathfrak g}_{\pm 1})$ is trivial.  The second isomorphism is a well known fact from the representations of reductive algebraic groups (cf.\ \cite[I.3.7]{jantzen}).

Next observe that $\fg_{\pm 1}$ is an ideal of $\fp^{\pm }$. There exists a Lyndon-Hochschild-Serre spectral sequence 
for the pair $(\fg_{\pm 1}, \{0 \})$ in $(\fp^{\pm }, \fg_{0})$ (cf.\ \cite[Theorem 6.5]{BorelWallach}): 
$$
E_{2}^{i,j}=\Ext^{i}_{(\fg_{0},\fg_{0})}({\mathbb C}, \Ext^{j}_{(\fg_{\pm 1},\{0\})}(M,L))\Rightarrow   
\Ext^{i+j}_{(\fp^{\pm },\fg_{0})}(M,L).
$$  
The higher extension groups $\Ext_{(\fg_{0},\fg_{0})}^{i}(-,-)$ vanish because we are considering extensions in the category of finitely semisimple
$\fg_{0}$-modules, thus the spectral sequence collapses and yields the first of the following isomorphisms:  
\begin{align*}
\Ext^{d}_{(\fp^{\pm},\g_{0})}(M,L)&\cong \Hom_{\g_{0}}({\mathbb C},\Ext^{d}_{\g_{\pm 1}}(M,L)),\\
&\cong \Hom_{G_{0}}({\mathbb C},\Ext^{d}_{\g_{\pm 1}}(M,{\mathbb C})\otimes L),\\
&= \Hom_{G_{0}}({\mathbb C},\Ext^{d}_{\g_{\pm 1}}(M,{\mathbb C})\otimes k[G_{0}]\otimes \Lambda^{\bullet}({\mathfrak g}_{\pm 1}), ),\\
&\cong  \Hom_{{\mathbb C}}({\mathbb C},\Ext^{d}_{\g_{\pm 1}}(M,{\mathbb C})\otimes \Lambda^{\bullet}({\mathfrak g}_{\pm 1})). 
\end{align*}  The second isomorphism follows from the fact that $\fg_{\pm 1}$ acts trivially on $L$ and the equality follows from \cref{E:Lcong}.  The third isomorphism follows from the Tensor Identity, the fact that $k[G_{0}] \cong \Ind_{1}^{G_{0}} \C $, and Frobenius reciprocity.

Hence, for all $d \geq 0$, we see that
\[
\dim \Ext^{d}_{(\fp^{\pm},\g_{0})}(M,L)\cong \dim \Ext^{d}_{\fg_{\pm 1}}(M,{\mathbb C})\otimes \Lambda^{\bullet}({\mathfrak g}_{\pm 1}),
\] 
and, since $\Lambda^{\bullet}({\mathfrak g}_{\pm 1})$ is finite dimensional, we have the first of the following equalities:
$$c_{\F}(M)=r(\Ext^{\bullet}_{\fg_{\pm 1}}(M,{\mathbb C}))=c_{\F(\fg_{\pm 1})}(M) =\dim {\mathcal V}_{\g_{\pm 1}}(M)=\dim {\mathcal V}_{\g_{\pm 1}}^{\text{rank}}(M).$$
To obtain the subsequent equalities we use  \cite[Theorem 2.9.1]{BKN3} along with the fact that the abelian superalgebra $\fg_{\pm 1}$ has only a single simple module, namely the trivial module. 
\end{proof}

\section{Kazhdan-Lusztig Polynomials}\label{S:KLpolys}

\subsection{} Given $\lambda, \mu \in X^{+}_{0}$, we define the ``naive'' Kazhdan-Lusztig polynomial $p_{\lambda, \mu}(q)$ by 
\begin{equation*}
p_{\lambda,\mu}(q) = q^{l(\mu) - l(\lambda)}\sum_{n \geq 0} \dim \Ext^{n}_{\F}\left(K(\lambda), L(\mu) \right)q^{-n}.
\end{equation*}
In the notation of \cite[Theorem 4.51]{brundan1} we have 
\begin{equation}\label{E:KLBrundanpoly}
p_{\lambda, \mu}(q) = q^{l(\mu) - l(\lambda)}l_{\lambda, \mu}(-q).
\end{equation}
In particular, $p_{\lambda, \mu}(q)$ has constant term $1$.

\subsection{} For the purposes of our computation we will need to use the following fact that 
the set of Kazhdan-Lusztig polynomials is finite. 

\begin{theorem}\label{T:finitenumberofKLpolys}  For a fixed $\gl (m|n)$, the set $\left\{ p_{\lambda,\mu}(q) \mid \lambda, \mu \in X_{0}^{+} \right\}$ is finite.
\end{theorem}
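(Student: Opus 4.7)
The plan is to derive finiteness from Brundan's explicit combinatorial description of the Kazhdan--Lusztig polynomials $l_{\lambda,\mu}(q)$ for $\gl(m|n)$ in \cite{brundan1} via cap diagrams, combined with the relation \cref{E:KLBrundanpoly}.

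First, from the definition of $p_{\lambda,\mu}(q)$ together with the stated fact that it is a polynomial in $q$ with constant term $1$, one rewrites
\[
p_{\lambda,\mu}(q) = \sum_{j=0}^{l(\mu)-l(\lambda)} \dim \Ext^{l(\mu)-l(\lambda)-j}_{\F}(K(\lambda), L(\mu))\, q^{j},
\]
a polynomial with non-negative integer coefficients and $q^{0}$-coefficient equal to $1$. Hence the problem reduces to bounding, uniformly in $\lambda, \mu \in X_{0}^{+}$, both the degree of $p_{\lambda,\mu}(q)$ and each of its coefficients.

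Second, I would invoke Brundan's combinatorial description of $l_{\lambda,\mu}(q)$ to conclude that $\dim \Ext^{n}_{\F}(K(\lambda), L(\mu))$ vanishes whenever $n$ exceeds a constant depending only on $(m,n)$; for instance, $n > \min(m,n)$ suffices, since the number of caps in the relevant cap diagram never exceeds the atypicality of the block. Combined with the requirement that $p_{\lambda,\mu}(q)$ have $q^{0}$-coefficient equal to $1$, this forces $l(\mu) - l(\lambda) \leq \min(m,n)$ whenever $p_{\lambda,\mu}(q) \neq 0$, bounding the degree of $p_{\lambda,\mu}(q)$ by $\min(m,n)$.

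Third, each coefficient $\dim \Ext^{n}_{\F}(K(\lambda), L(\mu))$ is itself bounded by a constant $M = M(m,n)$, again by Brundan's explicit formula: each such dimension is the cardinality of a finite set of decorated cap diagrams of bounded combinatorial complexity (supported on $m+n$ positions in the ambient weight diagrams). Combining these two bounds yields at most $(M+1)^{\min(m,n)+1}$ distinct polynomials $p_{\lambda,\mu}(q)$, establishing finiteness. The main obstacle is the precise verification of the uniform bounds on the degree and coefficients from Brundan's combinatorics; while these are natural consequences of his cap diagram formulas, the details require careful bookkeeping to translate between Brundan's conventions and those used here, and in particular to track the shifts relating $l_{\lambda,\mu}(q)$ and $p_{\lambda,\mu}(q)$ via \cref{E:KLBrundanpoly}.
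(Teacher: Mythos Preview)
There is a genuine gap in your degree-bound argument. You claim that $\dim \Ext^{n}_{\F}(K(\lambda), L(\mu))$ vanishes whenever $n > \min(m,n)$, but this is false. Already for $\gl(1|1)$, with $\lambda = 0$ and $\mu = a\varepsilon_{1} - a\varepsilon_{2}$ in the principal block, one computes $\Ext^{a}_{\F}(K(0), L(\mu)) \cong \C$ for every $a \geq 0$; the cohomological degree can be arbitrarily large. What must be bounded is not the range of $n$ for which $\Ext^{n}$ is nonzero, but rather the quantity $l(\mu) - l(\lambda) - n$ over those $n$: that difference is the exponent of $q$ appearing in $p_{\lambda,\mu}(q)$. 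The number of caps does bound the atypicality, and through Brundan's combinatorics it ultimately bounds the spread of $p_{\lambda,\mu}$, but it does not bound the individual $n$'s, so your deduction ``$l(\mu)-l(\lambda) \leq \min(m,n)$'' does not follow.

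The paper proceeds differently. It first uses the Su--Zhang bijection to reduce to the principal block of $\gl(k|k)$, where $l(\cdot)=|\cdot|$. Then a weight argument---Frobenius reciprocity followed by identifying $\Ext^{d}(K(\lambda),L(\mu))$ with a $\fg_{0}$-invariant subspace of $S^{d}(\fg_{1}^{*})\otimes L(\mu)$, and tracking the action of the central element $\sum_{i\le m} E_{i,i}$---shows that $\Ext^{d}_{\F}(K(\lambda),L(\mu))\neq 0$ forces $d = |\mu| - |\lambda| - b$ with $0 \le b \le \dim \fg_{-1}$. This pins the nonvanishing $\Ext$'s to a window of width $\dim \fg_{-1}$ around $l(\mu)-l(\lambda)$ and gives the degree bound $\deg p_{\lambda,\mu}(q) \le \dim \fg_{-1}$. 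For the coefficient bound the paper invokes Su--Zhang's description of $l_{\lambda,\mu}(-q^{-1})$ as a sum over a subset of $S_{k}$, yielding $p_{\lambda,\mu}(1) \leq k!$. Your coefficient argument via cap diagrams is plausible in spirit but, as you note, would still need a careful conversion between Brundan's conventions and those used here.
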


\begin{proof}  If $K(\lambda)$ and $L(\mu)$ lie in different blocks, then $p_{\lambda, \mu}=0$.  So we may assume $\lambda$ and $\mu$ lie in the same block.  Now fix a block $\mathcal{B}$ of atypicality $k$.  In \cite[Section 3.9]{suzhang} Su and Zhang combinatorially define a bijection on highest weights between any block of atypicality $k$ and the principal block of $\gl (k|k)$.  Let $\phi : \B \to \B_{0, k|k}$ denote the Su-Zhang bijection from $\B$ to the principal block of $\gl (k|k)$.  We discuss the bijection in greater detail in \cref{SS:generalblock}.  For the moment, however, we only need the following fact.  By \cite[Theorem 3.29]{suzhang} and the fact that the Su-Zhang bijection satisfies $l(\gamma) = l(\phi(\gamma))$ for all $\gamma \in \B$, we have $p_{\lambda, \mu}(q) = p_{\phi(\lambda), \phi(\mu)}(q)$ for all $\lambda, \mu \in \B$.  As a consequence, 
\[
\left\{ p_{\lambda,\mu}(q) \mid \lambda, \mu \in X_{0}^{+} \right\} = \left\{ p_{\lambda,\mu}(q) \mid \lambda, \mu \in \B_{0, k|k}, k=1, \dotsc , \operatorname{min}(m,n) \right\} \cup \{0 \}.
\] Therefore we may assume without loss of generality that $\B = \B_{0}$ is the principal block of $\gl (k|k)$.

Since by definition the coefficients of the polynomials $p_{\lambda,\mu}(q)$ are nonnegative integers, it suffices to show that there is an absolute bound on their degree and on the sum of their coefficients.
In order to bound the degree, observe that 
\begin{equation}\label{E:KLdegree}
\begin{aligned}
\dim \Ext_{\F}^{d}(K(\lambda), L(\mu)) &= \dim \Ext^{d}_{(\fg, \fg_{0})}(K(\lambda), L(\mu)) \\
                                      &= \dim  \Ext^{d}_{(\fp^{+}, \fg_{0})}(L_{0}(\lambda), L(\mu)) \\
                                      &= \dim  \Hom_{\fg_{0}}(L_{0}(\lambda), H^{d}(\fg_{1}, 0; L(\mu))) \\
                                      &\leq \dim  \Hom_{\fg_{0}}(L_{0}(\lambda), S^{d}(\fg_{1}^{*})\otimes L(\mu)) \\
                                      &\leq \dim \Hom_{\fg_{0}}(L_{0}(\lambda), S^{d}(\fg_{1}^{*})\otimes \Lambda^{\bullet}(\fg_{-1})\otimes L_{0}(\mu)).
\end{aligned}
\end{equation}
The first line is \cref{E:relcohom}, the second is Frobenius reciprocity, the third is application of a Lyndon-Hochschild-Serre spectral sequence, the fourth is because in this case relative cohomology is a subquotient of the complex $S^{\bullet}(\fg_{1}^{*}) \otimes L(\mu)$ (cf.\ \cite[Section 2.5]{BKN1}), and the last is because $L(\mu)$ is a quotient of the Kac module $K(\mu)$ and, hence, any $\fg_{0}$ composition factor of $S^{n}(\fg_{1}^{*}) \otimes L(\mu)$ is a composition factor of $S^{d}(\fg_{1}^{*}) \otimes K(\mu) \cong S^{n}(\fg_{1}^{*}) \otimes \Lambda^{\bullet}(\fg_{-1})\otimes L_{0}(\mu)$ (this isomorphism is as $\fg_{0}$-modules).

Now consider the element $c:=\sum_{k=1}^{m}E_{k,k} \in \fg_{0}$.  Then $c$ is central in the enveloping algebra of $\fg_{0}$. Furthermore, if $L_{0}(\gamma)$ is a simple $\fg_{0}$-module of highest weight $\gamma \in \fh^{*}$, then $c$ acts on $L_{0}(\gamma)$ by the scalar $|\gamma|$.  From this fact and the description of $\fg_{\pm 1}$ as $\fg_{0}$-modules, we see that $L_{0}(\lambda)$ is a composition factor of $S^{d}(\fg_{1}^{*})\otimes \Lambda^{\bullet}(\fg_{-1})\otimes L_{0}(\mu)$ only if 
\begin{equation*} 
|\lambda| = -d - b + |\mu|
\end{equation*}
 for some $b\in \{0,1, \dots , \dim \fg_{-1}\}$.  By \cref{E:KLdegree}, it follows that 
\begin{equation} \label{E:KLdegreebounds}
\dim \Ext_{\F}^{d}(K(\lambda), L(\mu) )\neq 0 \implies -d = |\lambda| - |\mu| + b
\end{equation}
where  $b\in \{0,1, \dots , \dim \fg_{-1}\}$.  This statement along with the fact that $|\lambda| = l(\lambda)$ for all $\lambda$ in the principal block of $\gl (k|k)$ shows by the definition of $p_{\lambda,\mu}(q)$ that the degree of this polynomial is bounded by $\dim \fg_{-1}$.

Next we prove that the sum of the coefficients of $p_{\lambda, \mu}(q)$ is absolutely bounded.  Observe using \cref{E:KLBrundanpoly} that this sum is given by $p_{\lambda, \mu}(1) = l_{\lambda,\mu}(-1)$.  
There is a convenient alternate description of $l_{\lambda,\mu}(-q^{-1})$ given in \cite[Theorem 3.24]{suzhang} which defines it as a sum of monomials indexed by a subset of the symmetric group on $k$ letters.  From this it is evident that $p_{\lambda,\mu}(1) \leq k!$.  
\end{proof}

\section{Projective Modules}\label{S:projmodulebound}

\subsection{} The formula in \cref{P:complexityext} indicates that in order to compute the complexity of a 
module in $\F_{(\fg , \fg_{0})}$, one requires effective bounds on the dimension of $P(\mu)$ (the projective cover of the simple $\gl (m|n)$-module $L(\mu)$).  
In order to accomplish this we relate the dimension of $P(\mu)$ to the dimension of $L_{0}(\mu)$.  

First observe that as a $\fg_{0}$-module $L(\mu)$ contains $L_{0}(\mu)$ as a composition factor.  Consequently, 
$\dim P(\mu) \geq \dim L_{0}(\mu)$.  On the other hand, by the PBW theorem for Lie superalgebras we see that $U(\fg )$ is a free $U(\fg_{0})$-module and, thus, 
$ U(\fg ) \otimes_{U(\fg_{0})} L_{0}(\mu)$ is a projective $\fg$-module.  Furthermore, by applying Frobenius reciprocity we see that $U(\fg ) \otimes_{U(\fg_{0})} L_{0}(\mu)$ surjects onto $L(\mu)$.  Thus, 
$P(\mu)$ is a direct summand of $ U(\fg ) \otimes_{U(\fg_{0})} L_{0}(\mu)$.  However, by the PBW theorem, $ U(\fg ) \otimes_{U(\fg_{0})} L_{0}(\mu) \cong \Lambda^{\bullet}(\fg_{\1}) \otimes L_{0}(\mu)$ as vector spaces.  Therefore we have 
$2^{\dim \fg_{\1}}\dim L_{0}(\mu) \geq \dim P(\mu)$.  In summary we have: 
\begin{equation}\label{E:boundingprojectives}
2^{\dim \fg_{\1}}\dim L_{0}(\mu) \geq \dim P(\mu)  \geq \dim L_{0}(\mu).
\end{equation}

\subsection{} We now obtain an upper bound on the dimension of projective indecomposables appearing in a minimal projective resolution of an object of $\F$. 

\begin{theorem}\label{T:Projupperbound}  Let $M$ be a $\gl (m|n)$-module which lies in a block of atypicality $k$.  Let $P_{\bullet} \to M$ be a minimal projective resolution for $M$.  Then there is a positive constant $C$ depending only on $m$, $n$ and $M$ such that if $P(\mu)$ appears as a direct summand of $P_{d}$, then 
\[
\dim P(\mu) \leq C d^{(m+n-k-1)k}.
\]
\end{theorem}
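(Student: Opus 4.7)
The plan is to use the bound $\dim P(\mu) \leq 2^{\dim \fg_{\1}}\dim L_0(\mu)$ from \cref{E:boundingprojectives} and to bound $\dim L_0(\mu)$ directly. By Weyl's dimension formula, $\dim L_0(\mu) = \dim L_{\gl(m)}(\mu^+)\cdot \dim L_{\gl(n)}(\mu^-)$ factors over $\Phi_m^+\sqcup \Phi_n^+$. Within a block of atypicality $k$ the non-atypical entries of $\mu$ are fixed by the core, so only the factors indexed by $B_m\sqcup C_m \sqcup B_n \sqcup C_n$ (in the notation of \cref{SS:bruhat}) vary with $\mu$, and each such factor depends linearly on the atypical entries. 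Hence $\dim L_0(\mu)$ is polynomial of total degree $|B_m|+|C_m|+|B_n|+|C_n|=k(m-k)+\binom{k}{2}+k(n-k)+\binom{k}{2}=k(m+n-k-1)$ in the $k$ atypical entries on each side. It therefore suffices to show that whenever $\Ext^d_\F(M,L(\mu))\neq 0$---equivalently, $P(\mu)$ appears as a summand of $P_d$---each atypical entry of $\mu$ has absolute value bounded by $C_1 d + C_2$ for constants $C_1,C_2$ depending only on $m,n,M$.

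To obtain this bound I reduce the Ext nonvanishing to a Kac-module statement. Iterating the long exact sequence of $\Ext$ along a composition filtration of $M$ produces a composition factor $L(\la)$ of $M$ with $\Ext^d_\F(L(\la),L(\mu))\neq 0$. Setting $\la^{(0)}=\la$, I iteratively apply the short exact sequence $0\to K'(\la^{(j)})\to K(\la^{(j)})\to L(\la^{(j)})\to 0$: at each step I either take the Kac branch $\Ext^{d-j}_\F(K(\la^{(j)}),L(\mu))\neq 0$ and stop, or (using the long exact sequence together with length induction on $K'(\la^{(j)})$) pass to a composition factor $L(\la^{(j+1)})$ of $K'(\la^{(j)})$ with $\Ext^{d-j-1}_\F(L(\la^{(j+1)}),L(\mu))\neq 0$. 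After $t\leq d$ iterations this terminates at $\Ext^{d^*}_\F(K(\la^*),L(\mu))\neq 0$ with $d^*=d-t$. Since each $\la^{(j+1)}$ is the highest weight of a composition factor of $K(\la^{(j)})$, we have $\la^{(j+1)}=\la^{(j)}-\sum_s\alpha_s$ for some positive odd roots $\alpha_s$; transferring via the Su-Zhang bijection $\phi$ to the principal block of $\gl(k|k)$ then yields $\phi(\la^*)_i\in[\phi(\la)_i-Dd,\phi(\la)_i]$ for all $i$, where $D=D(m,n)$ is a constant bounding the decrease in each $\phi$-coordinate per step.

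The proof of \cref{T:finitenumberofKLpolys} provides $0\leq l(\mu)-l(\la^*)\leq d^*+\dim\fg_{-1}\leq d+\dim\fg_{-1}$, and the standard support property of Brundan's Kazhdan-Lusztig polynomials ($p_{\la^*,\mu}(q)=0$ unless $\la^*\preccurlyeq \mu$) gives $\la^*\preccurlyeq \mu$. On the principal block of $\gl(k|k)$ the length function coincides with $|\cdot|$ and the Bruhat order is entry-wise (\cref{SS:bruhat}), so transferring via $\phi$ these constraints become $\phi(\la^*)_i\leq \phi(\mu)_i$ for $i=1,\dotsc,k$ and $\sum_i(\phi(\mu)_i-\phi(\la^*)_i)\leq d+\dim\fg_{-1}$. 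Nonnegativity of each summand forces $\phi(\la^*)_i\leq \phi(\mu)_i\leq \phi(\la^*)_i+d+\dim\fg_{-1}$; combining with the second paragraph yields $\phi(\la)_i-Dd\leq \phi(\mu)_i\leq \phi(\la)_i+d+\dim\fg_{-1}$. Since $\la$ ranges over the finite set of composition factors of $M$, $|\phi(\la)_i|$ is bounded by some $C_2(M)$, so $|\phi(\mu)_i|\leq C_2+(D+1)d+\dim\fg_{-1}$. The combinatorial description of $\phi$ in \cref{SS:generalblock} expresses the atypical entries of $\mu$ as affine functions of the $\phi(\mu)_i$ with shifts depending only on the block, so they too satisfy the same linear-in-$d$ bound, completing the proof. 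The main technical ingredient is Brundan's proof that $\F$ carries a Kazhdan-Lusztig theory in the Cline-Parshall-Scott sense, which supplies both the Bruhat-support property for $p_{\la^*,\mu}$ and the degree bound invoked from \cref{T:finitenumberofKLpolys}.
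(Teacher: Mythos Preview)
Your overall strategy---bound $\dim L_0(\mu)$ via Weyl's formula and the $A_m/B_m/C_m$ partition, with the $A$-factors constant on the block and the $B$- and $C$-factors polynomial of total degree $(m+n-k-1)k$ in the atypical entries---matches the paper's. The reduction from general $M$ to a simple composition factor $L(\lambda)$ is also the same (the paper phrases it via the Horseshoe Lemma). The divergence, and the gap, is in how you obtain the linear-in-$d$ bound on the atypical entries of $\mu$.

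The paper's route is shorter and avoids the Su--Zhang bijection entirely. After reducing to $M=L(\lambda)$ it first tensors by a power of the Berezinian so that $\lambda_i,\mu_i>0$ for $i\le m$ and $<0$ for $i>m$. It then invokes Brundan's identity
\[
\dim\Ext^d_\F(L(\lambda),L(\mu))=\sum_{i+j=d}\sum_{\sigma}\dim\Ext^i_\F(K(\sigma),L(\lambda))\,\dim\Ext^j_\F(K(\sigma),L(\mu))
\]
in one step: nonvanishing produces a single $\sigma$ and applying \cref{E:KLdegreebounds} twice gives $|\mu|\le d+D$. Positivity of the $\mu_t$ for $t\le m$ immediately converts this into $\mu_t\le d+D$ for each individual $t$, which is exactly what Weyl needs. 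No iteration, no Bruhat order, no $\phi$.

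Your iterative peeling $L(\lambda^{(0)})\leadsto K(\lambda^*)$ is where the argument breaks. First, the claim that $\lambda^{(j+1)}=\lambda^{(j)}-\sum_s\alpha_s$ with $\alpha_s$ \emph{odd} is false as stated: composition factors of $K(\lambda^{(j)})$ have highest weights differing from $\lambda^{(j)}$ by sums of positive roots that may include even roots. One can salvage a per-coordinate bound $|\lambda^{(j+1)}_r-\lambda^{(j)}_r|\le D_0$ from the fact that $\Lambda^\bullet(\fg_{-1})$ has only finitely many $\fg_0$-constituents, but that is not what you wrote. More seriously, transferring such a bound to $|\phi(\lambda^{(j+1)})_i-\phi(\lambda^{(j)})_i|$ is unjustified: the Su--Zhang bijection is not an affine map on coordinates, since which positions are ``atypical'' can shift as the weight varies within the block, and the explicit formula in \cref{SS:generalblock} is only given for the special $\zeta$-weights. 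Your one-sided claim $\phi(\lambda^*)_i\le\phi(\lambda)_i$ would in particular require that composition factors of Kac modules go down in \emph{Bruhat} order, which does not follow from the dominance-order statement you have. The paper's Berezinian trick sidesteps all of this by turning a bound on the sum $|\mu|$ into per-coordinate bounds via nonnegativity.
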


\begin{proof} We first consider the case when $M=L(\lambda)$ is a simple module.   Let $P(\mu)$ be a direct summand of $P_{d}$ in the minimal projective resolution of $L(\lambda)$.  By tensoring by sufficiently many copies of the one-dimensional Berezinian representation (cf.\ \cref{SS:glmn}) we may assume without loss of generality that $\lambda_{i},  \mu_{i} >0$ for $i=1, \dotsc , m$ and $\lambda_{i},  \mu_{i} < 0$ for $i=m+1, \dotsc , m+n$. Furthermore, by \cref{E:boundingprojectives} it suffices to show that $\dim L_{0}(\mu) $ is bounded above by $C d^{(m+n-k-1)k}$.

Since $\fg_{0} \cong \gl (m) \oplus \gl (n)$, it follows by Weyl's dimension formula (cf.\ \cite{goodman}) that 
\begin{equation}\label{E:Lzerodimension}
\dim L_{0}(\mu)  = \prod_{\alpha \in \Phi_{m}^{+}}\frac{(\mu + \rho_{m}, \alpha)}{(\rho_{m}, \alpha)}  \prod_{\alpha \in \Phi_{n}^{+}}\frac{(\mu + \rho_{n}, \alpha)}{(\rho_{n}, \alpha)}.
\end{equation}

We can decompose the first factor as follows: 
\begin{equation}\label{E:tripleproduct}
 \prod_{\alpha \in \Phi_{m}^{+}}\frac{(\mu + \rho_{m}, \alpha)}{(\rho_{m}, \alpha)}=  \prod_{\alpha \in A_{m}}\frac{(\mu + \rho_{m}, \alpha)}{(\rho_{m}, \alpha)} \prod_{\alpha \in B_{m}}\frac{(\mu + \rho_{m}, \alpha)}{(\rho_{m}, \alpha)} \prod_{\alpha \in C_{m}}\frac{(\mu + \rho_{m}, \alpha)}{(\rho_{m}, \alpha)},
\end{equation} where $A_{m}=A_{m}(\mu)$, $B_{m}=B_{m}(\mu)$, and $C_{m}=C_{m}(\mu)$ are as in \cref{SS:bruhat}.

Consider the factor involving $A_{m}$.  Since $P(\mu)$ lies in the same block as $L(\lambda)$ we have by \cref{P:serganovasblockdescription} that the core of $\mu$ equals the core of $\lambda$.  In particular, we have an equality of multisets 
\begin{equation*}
\{(\mu + \rho, \alpha) \mid \alpha \in A_{m}(\mu) \} = \{(\lambda+\rho, \alpha) \mid \alpha \in A_{m}(\lambda) \}.
\end{equation*}
and, hence, we have an equality of multisets 
\begin{equation*}
\{(\mu + \rho_{m}, \alpha) \mid \alpha \in A_{m}(\mu) \} = \{(\lambda+\rho_{m}, \alpha) \mid \alpha \in A_{m}(\lambda) \}.
\end{equation*}
From this and the fact that $(\rho_{m}, \alpha) \geq 1$ for all $\alpha \in \Phi_{m}^{+}$ we deduce that 
\begin{equation}\label{E:productI}
\prod_{\alpha \in A_{m}(\mu)}\frac{(\mu + \rho_{m}, \alpha)}{(\rho_{m}, \alpha)} \leq \prod_{\alpha \in A_{m}(\mu)} (\mu + \rho_{m}, \alpha) =  \prod_{\alpha \in A_{m}(\lambda)}(\lambda+\rho_{m}, \alpha) =: C_{1}.
\end{equation} In particular, $C_{1}$ is a constant which depends only on $m,n$, and $\lambda$.

Next consider the $B_{m}$ factor in \cref{E:tripleproduct}.   Since $P(\mu)$ appears in $P_{d}$, we have  $\Hom_{\mathcal{F}}(P_{d}, L(\mu))\linebreak[0] \neq 0$.  But as $P_{\bullet}$ is a minimal projective resolution, we have 
\[
\Ext_{\F}^{d}\left(L(\lambda), L(\mu) \right) = \Hom_{\F}(P_{d}, L(\mu) )\neq 0.
\] By \cite[Corollary 4.52]{brundan1} we have
\[
0 \neq \dim \Ext^{d}_{\F }(L(\lambda),L(\mu))  =  \sum_{i+j=d}\ \sum_{\s\in \mathcal{B}} \dim \Ext^{i}_{\F}(K(\s),L(\lambda))\,\dim \Ext^{j}_{\F}(K(\s),L(\mu)),
\] where $\mathcal{B}$ is the block containing $L(\lambda)$ and $L(\mu)$.   Therefore, there must be a $\sigma \in \mathcal{B}$ and $i,j$ with $i+j=d$ such that 
\[
 \dim \Ext^{i}_{\F}(K(\s),L(\lambda)) \neq 0 \quad \text{and} \quad \dim \Ext^{j}_{\F}(K(\s),L(\mu)) \neq 0.
\] Now this implies by \cref{E:KLdegreebounds} that 
\begin{align*}
i&=|\lambda|-|\sigma| - b_{1}, \\
j&=  |\mu| -|\sigma| -b_{2},
\end{align*} where $b_{1}, b_{2} \in \{0, 1, \dotsc , \dim \fg_{-1} \}$.  Taking the difference we obtain 
\[
-d \leq i-j= |\lambda|-|\mu| -b_{1}+b_{2}
\] and we obtain
\[
|\mu| \leq d + D,
\] where $D$ is a constant depending only on $m,n$ and $\lambda$.

Since $\mu_{1}, \dotsc , \mu_{m}\geq 0$ we deduce
\[
\mu_{t} \leq d+D, 
\] for $t=1, \dotsc , m$. Let $\alpha = \varepsilon_{r} - \varepsilon_{s} \in B_{m} (\mu)$.  Then 
\[
(\mu, \alpha)  \leq \mu_{r}+\mu_{s} \leq 2(d+D).
\]  

Let $z = \min\{( (\rho_{m}, \alpha) \mid \alpha \in \Phi_{m}^{+}\} > 0$ and $Z = \max\{ (\rho_{m}, \alpha) \mid \alpha \in \Phi_{m}^{+} \}$. We then have
\begin{equation}\label{E:productII}
\prod_{\alpha \in B_{m}}\frac{(\mu + \rho_{m}, \alpha)}{(\rho_{m}, \alpha)} = \prod_{\alpha \in B_{m}}\frac{(\mu, \alpha) + (\rho_{m}, \alpha)}{(\rho_{m}, \alpha)} \leq \prod_{\alpha \in B_{m}}\frac{2(d+D)+Z}{z} \leq C_{2} d^{(m-k)k},
\end{equation}
where $C_{2}$ is a constant depending only on $D$, $Z$, and $z$ and so, ultimately, only on $m,n$, and $\lambda$.
The last inequality also uses the observation that there are precisely $(m-k)k$ elements in the set $B_{m}$.

We now consider the $C_{m}$ factor.  An identical calculation using that there are precisely $(k^{2}-k)/2$ elements in the set $C_{m}$ yields 
\begin{equation}\label{E:productIII}
 \prod_{\alpha \in C_{m}}\frac{(\mu + \rho_{m}, \alpha)}{(\rho_{m}, \alpha)} \leq C_{3} d^{(k^{2}-k)/2}
\end{equation}
where again $C_{3}$ depends only on $m,n$, and $\lambda$.

Finally, we input \cref{E:productI}, \cref{E:productII}, and \cref{E:productIII} into \cref{E:tripleproduct} and obtain 
\begin{equation}\label{E:pbound}
 \prod_{\alpha \in \Phi_{m}^{+}}\frac{(\mu + \rho_{m}, \alpha)}{(\rho_{m}, \alpha)} \leq C_{1}C_{2}C_{3}d^{(m-k)k+(k^{2}-k)/2}.
\end{equation}

We now turn to the second factor in \cref{E:Lzerodimension}.  An identical analysis (changing $m$ to $n$) yields
\begin{equation}\label{E:qbound}
 \prod_{\alpha \in \Phi_{n}^{+}}\frac{(\mu + \rho_{n}, \alpha)}{(\rho_{n}, \alpha)} \leq  C'_{1}C'_{2}C'_{3}d^{(n-k)k +(k^{2}-k)/2}.
\end{equation}  Finally, inserting \cref{E:pbound,E:qbound} into \cref{E:Lzerodimension} we obtain 
\[
\dim L_{0}(\mu) \leq  Cd^{(m+n-k-1)k},
\] where $C$ is some constant depending only on $m,n$, and $\lambda$.  As we explained at the beginning of the proof, this suffices to prove the desired result for $L(\lambda)$.

To prove the general case, we use the Horseshoe Lemma to argue by induction on the length of a composition series for $M$. 
\end{proof}

\section{Complexity for Kac Modules} \label{S:KacComplexity}

\subsection{}  We begin by establishing general bounds using the geometry of support varieties for the complexity of a Kac module for a Type I Lie superalgebra $\fg$.  
Let $K(\lambda)$ be a Kac module and let 
$$\dots \rightarrow P_{2} \rightarrow P_{1} \rightarrow P_{0} \rightarrow L_{0}(\lambda) \rightarrow 0$$ 
be a minimal projective resolution of $L_{0}(\lambda)$ in ${\mathcal F}_{({\mathfrak p}^{+},{\mathfrak g}_{0})}$. 
We can apply the exact functor $U({\mathfrak g})\otimes_{U({\mathfrak p}^{+})} -$ to this resolution to get 
a projective resolution with the same rate of growth for $K(\lambda)$ in ${\mathcal F}_{(\g,\g_{0})}$: 
$$\dots \rightarrow U({\mathfrak g})\otimes_{U({\mathfrak p}^{+})} P_{2} \rightarrow U({\mathfrak g})\otimes_{U({\mathfrak p}^{+})} 
P_{1} \rightarrow U({\mathfrak g})\otimes_{U({\mathfrak p}^{+})} P_{0} \rightarrow K(\lambda) \rightarrow 0.$$ 
This shows that 
\begin{equation} \label{E:upperboundKac}
c_{{\mathcal F}_{(\g,\g_{0})}}(K(\lambda))\leq c_{{\mathcal F}_{({\mathfrak p}^{+},\g_{0})}}(L_{0}(\lambda))\leq \dim \g_{1},
\end{equation} 
where the last inequality is by \cref{T:psupports}.

Next observe that any projective resolution in ${\mathcal F}_{(\g,\g_{0})}$ of a module $M$ (such as $K(\la)$) will restrict to a projective resolution of 
$M$ in ${\mathcal F}_{({\mathfrak p}^{+},{\mathfrak g}_{0})}$. Therefore, 
\begin{equation*}
 c_{{\mathcal F}_{({\mathfrak p}^{+},\g_{0})}}(M) \leq c_{{\mathcal F}_{(\g,\g_{0})}}(M). 
\end{equation*}
Combining this statement with \cref{T:psupports}, we have 
\begin{equation} \label{E:lowerboundKac}
 \dim {\mathcal V}^{\text{rank}}_{\g_{1}}(M) \leq c_{{\mathcal F}_{(\g,\g_{0})}}(M). 
\end{equation} 

\subsection{\texorpdfstring{Complexity of Kac Modules in the Principal Block for $\gl (k|k)$}{Complexity of Kac Modules in the Principal Block for gl(k|k)}}\label{SS:Kacmodulesintheprincipalblock} 
As a step towards solving the general problem for $\gl (m|n)$, we can now compute the complexity of the Kac and dual Kac modules in 
the principal block ${\mathcal B}_{0}$ of ${\mathcal F}_{(\g,\g_{0})}$ for $\mathfrak{gl}(k|k)$. 

\begin{theorem}\label{T:KacinthePrincipalBlock} Let $K(\lambda)$ be a Kac module (resp.\ $K^{-}(\lambda)$ be a dual Kac module) in the principal block $\B_{0}$ of ${\mathcal F}={\mathcal F}_{(\g,\g_{0})}$ for $\g=\mathfrak{gl}(k|k)$. Then
$$c_{\mathcal F}(K(\lambda))=c_{\mathcal F}(K^{-}(\lambda))=\operatorname{atyp}(\lambda)^{2}.$$ 
\end{theorem}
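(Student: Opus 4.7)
I will bracket $c_{\mathcal F}(K(\lambda))$ between the two bounds developed in Section 6.1 and show that each equals $k^{2}$. The argument for $K^{-}(\lambda)$ is parallel, using $\mathfrak p^{-}$ and $\mathfrak g_{-1}$ throughout in place of $\mathfrak p^{+}$ and $\mathfrak g_{1}$.

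For the \emph{upper bound}, \cref{E:upperboundKac} gives $c_{\mathcal F}(K(\lambda)) \leq c_{\mathcal F_{(\mathfrak p^{+},\mathfrak g_{0})}}(L_{0}(\lambda))$, and then \cref{T:psupports} identifies the right-hand side with $\dim \mathcal V_{\mathfrak g_{1}}^{\mathrm{rank}}(L_{0}(\lambda))$. Because $L_{0}(\lambda)$ is inflated from $\mathfrak g_{0}$, every $0 \ne x \in \mathfrak g_{1}$ acts as zero on $L_{0}(\lambda)$, so $L_{0}(\lambda)$ fails to be $U(\langle x \rangle)$-projective for every such $x$. Thus $\mathcal V_{\mathfrak g_{1}}^{\mathrm{rank}}(L_{0}(\lambda)) = \mathfrak g_{1}$, of dimension $\dim \mathfrak g_{1} = k^{2} = \operatorname{atyp}(\lambda)^{2}$.

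For the \emph{lower bound}, \cref{E:lowerboundKac} combined with \cref{T:psupports} (applied to $K(\lambda)\vert_{\mathfrak p^{+}}$) gives $c_{\mathcal F}(K(\lambda)) \geq \dim \mathcal V_{\mathfrak g_{1}}^{\mathrm{rank}}(K(\lambda))$, and my goal is to show this variety is all of $\mathfrak g_{1}$. Since the rank variety is $G_{0}$-invariant, the $G_{0}$-orbit structure recalled in \cref{SS:geometry} implies that $\mathcal V_{\mathfrak g_{1}}^{\mathrm{rank}}(K(\lambda)) = \overline{(\mathfrak g_{1})_{r}}$ for a unique $r \in \{0,1,\dots,k\}$, with dimension $r(2k-r)$. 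Hence it suffices to exhibit a single rank-$k$ element $x \in \mathfrak g_{1}$ over which $K(\lambda)$ is not $U(\langle x \rangle)$-projective.

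Taking advantage of the atypicality, I would pick $x := \sum_{t=1}^{k} E_{i_{t},j_{t}}$, where $\{\varepsilon_{i_{t}} - \varepsilon_{j_{t}}\}_{t=1}^{k}$ are the pairwise orthogonal positive odd roots with $(\lambda + \rho, \varepsilon_{i_{t}} - \varepsilon_{j_{t}}) = 0$; this $x$ has rank $k$. Under the PBW identification $K(\lambda) \cong U(\mathfrak g_{-1}) \otimes L_{0}(\lambda)$ the action of $x$ is a super Koszul-type differential lowering $\mathfrak g_{-1}$-degree, and on a simple tensor $y \otimes v$ with $y \in \mathfrak g_{-1}$, $v \in L_{0}(\lambda)$, it collapses to $x \cdot (y \otimes v) = [x,y] \cdot v$ (since $x$ annihilates $L_{0}(\lambda)$ via inflation). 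The highest weight vector $v_{\lambda} \in L_{0}(\lambda) \subset K(\lambda)$ satisfies $x \cdot v_{\lambda} = 0$ trivially, so all that remains is to show $v_{\lambda} \notin \operatorname{im}(x)$. Because the weight space $K(\lambda)_{\lambda}$ is spanned by $v_{\lambda}$, this reduces to showing that the component of $x \cdot w$ along $v_{\lambda}$ vanishes for every $w$ in the weight space $K(\lambda)_{\lambda - \operatorname{wt}(x)}$; the vanishing is forced by the orthogonality $(\lambda + \rho, \varepsilon_{i_{t}}-\varepsilon_{j_{t}}) = 0$.

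The \emph{main obstacle} is the last calculation. The space $K(\lambda)_{\lambda - \operatorname{wt}(x)}$ generally breaks into several pieces indexed by distinct ways of writing $\operatorname{wt}(x) = \sum_{t}(\varepsilon_{i_{t}} - \varepsilon_{j_{t}})$ as a sum of positive roots together with a weight contribution from $L_{0}(\lambda)$; the coefficient of $v_{\lambda}$ in $x \cdot w$ is a sum over these pieces that must be identified with a combination of scalars of the form $(\lambda + \rho, \varepsilon_{i_{t}}-\varepsilon_{j_{t}})$. Carrying out this bookkeeping with the $\mathfrak g_{0}$-action on $L_{0}(\lambda)$ and the brackets $[E_{i_{t},j_{t}}, E_{j_{s},i_{s}}]$ is where the real work lies.
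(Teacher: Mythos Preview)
Your overall sandwich strategy matches the paper's: bound $c_{\mathcal F}(K(\lambda))$ between $\dim \mathcal V^{\mathrm{rank}}_{\mathfrak g_1}(K(\lambda))$ and $\dim\mathfrak g_1=k^2$, and then show the rank variety fills $\mathfrak g_1$ by exhibiting a single rank-$k$ element on which $K(\lambda)$ is not free. The upper bound is identical. The divergence is in how non-freeness is established, and here your proposal has a real gap.

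The paper chooses the fixed element $I_k=\sum_{i=1}^k E_{i,k+i}$, observes that $[\mathfrak g_{-1},I_k]=\Delta\mathfrak{gl}_k$, and uses the principal-block fact $L_0(\lambda)\cong S\boxtimes S^*$, so that $L_0(\lambda)|_{\Delta G_0}\cong\mathbb C\oplus N$. This splits $K(\lambda)|_{U(\langle I_k\rangle)}$ with $K(0)$ as a summand, and then $K(0)|_{U(\mathfrak p^+)}$ visibly has a trivial one-dimensional summand. The point is that the paper does \emph{not} try to show that $v_\lambda$ itself survives in $\ker/\operatorname{im}$; rather it locates the $\Delta G_0$-invariant in $S\otimes S^*$, which is generally \emph{not} the highest weight vector.

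Your plan instead picks the $\lambda$-dependent element $x=\sum_t E_{i_t,j_t}$ and asserts $v_\lambda\notin\operatorname{im}(x)$, claiming this is forced by the vanishing of the atypicality scalars. Two problems. First, $x$ is not a $\mathfrak t$-weight vector, so the phrase ``$K(\lambda)_{\lambda-\operatorname{wt}(x)}$'' has no meaning; the $\lambda$-component of $x\cdot w$ receives contributions from several different weight spaces $K(\lambda)_{\lambda-\alpha_t}$, and these mix through the off-diagonal pieces of $[x,\mathfrak g_{-1}]$. Second, and more seriously, the conclusion is genuinely sensitive to the choice of rank-$k$ element, not just to its rank. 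For the paper's $I_k$ one has $[I_k,\mathfrak g_{-1}]=\Delta\mathfrak{gl}_k$, whose image on $L_0(\lambda)=\operatorname{End}(S)$ is exactly the traceless endomorphisms; since $v_\lambda$ is a rank-one nilpotent (hence traceless) whenever $\dim S>1$, one gets $v_\lambda\in\operatorname{im}(I_k)$. So ``$v_\lambda\notin\operatorname{im}$'' fails for a generic rank-$k$ element, and your argument must use something specific about your $x$ beyond the equations $(\lambda+\rho,\alpha_t)=0$. Those equations only control the Cartan pieces $E_{i_t i_t}+E_{j_t j_t}$ of $[x,\mathfrak g_{-1}]$; the off-diagonal pieces $E_{i_t i}+E_{j j_t}$ act on $L_0(\lambda)$ in ways not governed by the atypicality scalars, and it is precisely their contribution you would need to control. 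That is not routine bookkeeping; it is the heart of the matter, and your outline does not address it. The paper's route via the $\Delta G_0$-invariant in $S\boxtimes S^*$ avoids this difficulty entirely.
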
 

\begin{proof}  Let us first consider the case of Kac modules. If $K(\lambda)$ lies in the principal block $\B_{0}$, then $\atyp (\lambda)=k$.  According to  \cref{E:upperboundKac} and \cref{E:lowerboundKac}, 
$$\dim {\mathcal V}^{\text{rank}}_{\g_{1}}(K(\lambda))\leq c_{{\mathcal F}_{(\g,\g_{0})}}(K(\lambda))\leq \dim \g_{1}=k^{2}.$$ 
Therefore, it suffices to prove that ${\mathcal V}^{\text{rank}}_{\g_{1}}(K(\lambda))={\mathfrak g}_{1}$. 

Let
\[
I_{k}=E_{1,k+1}+E_{2,k+2}+\dots+E_{k,2k} \in \fg_{1}.
\] The element $I_{k}$ has rank $k$ so by \cref{SS:geometry},    
${\mathfrak g}_{1}=\overline{G_{0}.I_{k}}.$ 
The variety ${\mathcal V}^{\text{rank}}_{\g_{1}}(K(\lambda))$ is closed and stable under $G_{0}$ (because $K(\lambda)$ is a ${\mathfrak g}$-module), so 
we  need only demonstrate that $K(\lambda)$ is not free as $U(\langle I_{k} \rangle)$-module. 

Since $K(\lambda)$ is in ${\mathcal B}_{0}$, $L_{0}(\lambda)\cong S\boxtimes  S^{*}$ where $S$ is a simple $GL(k)$-module. Let $\Delta G_{0}$ (resp.\ 
$\Delta{\mathfrak g}_{0}$) be the image of the diagonal embedding of $GL(k)\hookrightarrow GL(k) \times GL(k)$ (resp.\ $\mathfrak{gl}(k) 
\hookrightarrow \mathfrak{gl}(k)\times \mathfrak{gl}(k)$). As a $\Delta G_{0}$-module,  
$L_{0}(\lambda)\cong S\otimes S^{*}\cong {\mathbb C}\oplus N$ for some module $N$. 

One can verify directly that 
\begin{equation*}
[{\mathfrak g}_{-1},I_{k}]\subseteq \Delta{\mathfrak g}_{0}.
\end{equation*}
By the fact that ${\mathfrak g}$ is ${\mathbb Z}$-graded
\begin{equation*} 
[\Delta{\mathfrak g}_{0},{\mathfrak g}_{-1}]\subseteq {\mathfrak g}_{-1}.
\end{equation*}
The inclusions above imply that as a  $U(\langle I_{k} \rangle)$-module, $K(\lambda)$ decomposes as 
\begin{equation} \label{E:I-decomposition}
K(\lambda)|_{U(\langle I_{k} \rangle)}\cong \left(  U({\mathfrak g})\otimes_{U({\mathfrak p}^{+})}  \C\right)\oplus \left( U({\mathfrak g})\otimes_{U({\mathfrak p}^{+})}  N \right). 
\end{equation}  
Now consider $K(0)=U({\mathfrak g})\otimes_{U({\mathfrak p^{+}})}{\mathbb C}$ and observe that this is the first component of the 
decomposition in \cref{E:I-decomposition} as a $U(\langle I_{k} \rangle)$-module. 
Using the ${\mathbb Z}$-grading on ${\mathfrak g}$ one has 
\begin{equation*} 
[{\mathfrak p^{+}},{\mathfrak g}_{-1}]\subseteq {\mathfrak g}_{-1}\oplus {\mathfrak g}_{0} 
\end{equation*}
and 
\begin{equation*} 
[{\mathfrak g}_{-1},{\mathfrak g}_{0}]\subseteq {\mathfrak g}_{-1}.
\end{equation*}
These two relations imply that as $U({\mathfrak p}^{+})$-module we have 
$$K(0)|_{U({\mathfrak p}^{+})}\cong  \left(1\otimes \C \right)  \oplus  \left( U({\mathfrak g}_{-1}){\mathfrak g}_{-1} \otimes \C \right)\cong  \C \oplus  \left( U({\mathfrak g}_{-1}){\mathfrak g}_{-1} \otimes \C\right).$$ 
Since $I_{k}\in {\mathfrak p}^{+}$, it follows that $K(\lambda)$ as a $U(\langle I_{k} \rangle)$-module has ${\mathbb C}$ as a direct summand, which proves that 
$K(\lambda)$ is not free as a $U(\langle I_{k} \rangle)$-module. 

The proof for dual Kac modules follows the same line of reasoning by interchanging ${\mathfrak g}_{\pm 1}$ with ${\mathfrak g}_{\mp 1}$.
\end{proof}

\subsection{General Case}\label{SS:serganovablockequivalence} We can now compute the complexity of an arbitrary Kac module for $\gl (m|n)$.  This requires the use of Serganova's block equivalences.  Specifically, Serganova proves that a block of $\F =\F (\gl (m|n), \gl (m|n)_{0})$ 
of atypicality $k$ is equivalent to the principal block of $\F (\gl (k|k), \gl (k|k)_{0})$.   This is also proven by entirely different means by Brundan and Stroppel \cite{BrSt:09a}.  However, for our purposes we require the explicit equivalence constructed by Serganova.  We only sketch what we need; the full details can be found in \cite{serganova3}.

Let $\B$ be a fixed block of $\F$ with atypicality $k$.  On a given module in $\B $ the functor defining Serganova's equivalence is given by 
\begin{equation}\label{E:Phidef}
\Res_{\mu}  \circ T_{t} \circ \dotsb \circ T_{1}, 
\end{equation} where $t$ is some natural number determined by the module and the functors $T_{i}$ are certain translation functors which provide  equivalences between blocks of $\F$.   Let $\fg '$ denote the subalgebra of $\fg$ isomorphic to $\gl (k|k)$ spanned by the matrix units $E_{i,j}$  ($i,j = m-k+1, \dotsc , m+k$). The functor $\Res_{\mu}$ refines the restriction from $\fg$ to $\fg '$.

We examine how complexity is affected by the translation functors.  Let $T$ be a translation functor defined by tensoring by a finite dimensional $\fg$-module $E$ and projecting onto a block.   Let $P_{\bullet} \to  M$ be a minimal projective resolution of $M$.  As translation functors are exact, if we apply $T$ to this resolution we obtain a (not necessarily minimal) projective resolution  $T(P_{\bullet}) \to T(M)$.  Furthermore, we have $\dim \left(T(P_{d}) \right) \leq \dim (P_{d}) \cdot \dim (E)$ for all $d \geq 0$.    Therefore we have 
\[
c_{\mathcal{F}}(T(M)) \leq c_{\mathcal{F}}(M).
\]   However, in the case of Serganova's construction, each $T_{i}$ is an equivalence of categories between two blocks with the inverse functor also given by a translation functor.  From this we immediately obtain 
\[
c_{\mathcal{F}}(T_{i}(M)) = c_{\mathcal{F}}(M)
\]  for all $i$ in \cref{E:Phidef}.
  
In what follows we use the notation of \cref{SS:geometry}.  For the computation of the dimension in the following theorem we use the well known formula for the dimension of the variety $\overline{(\fg_{1})_{k}}$ (cf.\ \cite[Proposition 12.2]{Har:92}).

\begin{theorem}\label{T:arbKacmodule}  Let $K(\lambda)$ be a Kac module (resp.\ $K^{-}(\lambda)$ be a dual Kac module) for $\gl(m|n)$ with $\atyp (\lambda) = k$.  Then 
\begin{itemize}
\item[(a)] $c_{\mathcal F}(K(\lambda))= \dim  \overline{(\fg_{1})_{k}} = (m+n)k-k^{2}$; 
\item[(b)] $c_{\mathcal F}(K^{-}(\lambda))= \dim  \overline{(\fg_{-1})_{k}} = (m+n)k-k^{2}$.
\end{itemize} 
\end{theorem}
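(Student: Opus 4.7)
Write $k=\atyp(\lambda)$.  The plan is to prove matching inequalities
\[
(m+n)k-k^{2} \leq c_{\mathcal F}(K(\lambda)) \leq (m+n)k-k^{2};
\]
the argument for $K^{-}(\lambda)$ then proceeds in parallel with $\fg_{1}$ replaced by $\fg_{-1}$ (equivalently, $\fp^{+}$ replaced by $\fp^{-}$), and with $\overline{(\fg_{1})_{k}}$ replaced by $\overline{(\fg_{-1})_{k}}$ (which has the same dimension by \cref{SS:geometry}).

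For the lower bound, \cref{E:lowerboundKac} gives $c_{\mathcal F}(K(\lambda)) \geq \dim \V^{\text{rank}}_{\fg_{1}}(K(\lambda))$, and this rank variety is closed and $G_{0}$-invariant, so by \cref{SS:geometry} it coincides with $\overline{(\fg_{1})_{r}}$ for some $r$.  It therefore suffices to exhibit a single rank-$k$ element $x \in \V^{\text{rank}}_{\fg_{1}}(K(\lambda))$.  The plan is to use Serganova's translation functor equivalences from \cref{SS:serganovablockequivalence}, which preserve complexity, to reduce to a Kac module $K(\lambda_{0})$ whose atypical roots form the convenient set $\Omega_{0} = \{\varepsilon_{t}-\varepsilon_{m+t}\mid t=1,\ldots,k\}$, and then to take $x = I_{k} = E_{1,m+1}+\cdots+E_{k,m+k}$.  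Generalizing the proof of \cref{T:KacinthePrincipalBlock}, we exploit the inclusion $[\fg_{-1},I_{k}] \subseteq \Delta\gl(k) \subseteq \fg_{0}$ together with the existence of a trivial $\Delta GL(k)$-summand inside a suitable $\fg_{0}$-isotypic piece of $L_{0}(\lambda_{0})$, producing a trivial direct summand of $K(\lambda_{0})|_{U(\langle I_{k}\rangle)}$.  Since $\overline{(\fg_{1})_{k}} = \overline{G_{0}\cdot I_{k}}$ has dimension $(m+n)k-k^{2}$, the lower bound follows.

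For the upper bound, let $P_{\bullet} \to K(\lambda)$ be the minimal projective resolution and write $P_{d} = \bigoplus_{\mu}P(\mu)^{m_{\mu}(d)}$ with $m_{\mu}(d) = \dim\Ext^{d}_{\mathcal F}(K(\lambda),L(\mu))$.  The plan is to establish the crude estimate
\[
\dim P_{d} \;\leq\; N(d)\cdot \max_{\mu}m_{\mu}(d) \cdot \max_{\mu}\dim P(\mu),
\]
where $N(d) = \#\{\mu : m_{\mu}(d)\neq 0\}$, and then to bound each factor polynomially in $d$.  By \cref{T:Projupperbound}, every $P(\mu)$ appearing in $P_{d}$ satisfies $\dim P(\mu) \leq C\,d^{(m+n-k-1)k}$.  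The proof of \cref{T:finitenumberofKLpolys} yields $p_{\lambda,\mu}(1)\leq k!$, so $m_{\mu}(d)\leq k!$.  Finally, using \cref{E:KLdegreebounds} together with the degree bound $\deg p_{\lambda,\mu}(q)\leq k^{2}$ (which is valid in any block of atypicality $k$ by the Su--Zhang transfer to the $\gl(k|k)$ principal block used in \cref{T:finitenumberofKLpolys}), the condition $m_{\mu}(d)\neq 0$ forces $l(\mu)$ to lie in an interval of length at most $k^{2}$; a combinatorial count within the block will then give $N(d) \leq C'\,d^{k-1}$.  Combining the three estimates yields $\dim P_{d} \leq C''\,d^{(m+n-k)k - 1}$, so that $c_{\mathcal F}(K(\lambda)) \leq (m+n-k)k = (m+n)k-k^{2}$.

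The main obstacle will be the final count $N(d) \leq C'\,d^{k-1}$.  This requires a careful combinatorial argument using the block parameterization of \cref{P:serganovasblockdescription} together with the explicit formula for $l$: weights $\mu$ in a fixed block of atypicality $k$ are determined by finitely many choices of atypical position together with the $k$ atypical values of $\mu+\rho$, and $l(\mu)$ is a linear function of those values.  Fixing $l(\mu)$ in a bounded interval therefore imposes essentially one linear relation on these $k$ values, leaving $k-1$ free parameters, each of size $O(d)$; one must verify that the resulting lattice-point count grows polynomially of degree exactly $k-1$ in $d$.
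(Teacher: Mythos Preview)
Your upper bound strategy is essentially the paper's: bound $\dim P(\mu)$ by \cref{T:Projupperbound}, bound the multiplicities by \cref{T:finitenumberofKLpolys}, and count the $\mu$ that contribute.  One point you gloss over: knowing only that $l(\mu)$ lies in an interval of length $k^{2}$ does \emph{not} give $N(d)\le C'd^{k-1}$, because in the principal block of $\gl(k|k)$ there are infinitely many $\mu$ with a given value of $l(\mu)=\sum\mu_{i}$.  You also need the Bruhat constraint $\lambda\preccurlyeq\mu$ coming from $p_{\lambda,\mu}\neq 0$, which bounds each coordinate from below and turns the count into a partition count.  The paper makes this step cleaner by first invoking Serganova's equivalence (the quantity $\sum_{\mu}\dim\Ext^{d}(K(\lambda),L(\mu))$ is categorical), reducing to $\gl(k|k)$, twisting so that $\lambda$ is a partition, and then counting partitions of integers in $[|\lambda|+d,\,|\lambda|+d+k^{2}]$ with at most $k$ parts.

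Your lower bound, however, has a genuine gap.  The inclusion $[\fg_{-1},I_{k}]\subseteq\Delta\gl(k)$ is \emph{false} in $\gl(m|n)$ once $m>k$ or $n>k$: for $i\le k<j\le m$ one computes $[E_{m+i,j},I_{k}]=E_{i,j}\in\gl(m)$, which is not in $\Delta\gl(k)$.  Consequently a $\Delta GL(k)$-decomposition of (a piece of) $L_{0}(\lambda_{0})$ is not $I_{k}$-stable inside $K(\lambda_{0})$, and the direct generalization of the $\gl(k|k)$ argument breaks down.  The paper avoids this by using not only the translation functors $T_{i}$ but also the restriction functor $\Res_{\mu}$ in Serganova's equivalence: after the translations one has a $\gl(m|n)$-module $M$ with $c_{\F}(M)=c_{\F}(K(\lambda))$, and as a module for the subalgebra $\fg'\cong\gl(k|k)$ one has a splitting $M=\Res_{\mu}(M)\oplus G_{\mu}(M)$ with $\Res_{\mu}(M)$ a Kac module in the \emph{principal block of $\gl(k|k)$}.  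Now \cref{T:KacinthePrincipalBlock} applies verbatim inside $\fg'$ to give $I_{k}\in\V_{\fg'_{1}}(\Res_{\mu}(M))$, and the rank variety description plus the $\fg'$-splitting of $M$ then force $I_{k}\in\V_{\fg_{1}}(M)$.  This is the missing step in your plan.
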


\begin{proof}  We will prove (a). Part (b) follows by a similar reasoning.  Our first step entails showing that $c_{\mathcal F}(K(\lambda))\geq \dim   \overline{(\fg_{1})_{k}} $.  
As $\atyp (\lambda) =k$, $K(\lambda)$ lies in a block of atypicality $k$. By the work of Serganova \cite{serganova3} discussed above, this block is equivalent to the principal block of $\mathfrak{gl}(k|k)$.  
On a fixed module the functor defining this equivalence is of the form given by \cref{E:Phidef}.  As discussed above, the complexity of a module is unaffected by the functors $T_{1}, \dotsc , T_{t}$.  
Thus the module $M = T_{t} \circ \dotsb \circ T_{1}(K(\lambda))$  has the same complexity as $K(\lambda)$.  By the definition of the functor $\Res_{\mu}$ we have, as $\fg'$-modules, 
\begin{equation}\label{E:Resdecomp}
M = \Res_{\mu}(M) \oplus G_{\mu}(M)
\end{equation}
for a $\fg'$-module $G_{\mu}(M)$ (cf.\ \cite[(4.7.4)]{BKN2}).  The functor \cref{E:Phidef} takes Kac modules to Kac modules and, in particular, $\Res_{\mu}(M)$ is a Kac module in the principal block of $\gl (k|k)$.  Now it was shown in the proof of \cref{T:KacinthePrincipalBlock} that the element $I_{k} \in \fg '_{1}\subseteq \fg_{1}$ lies in $\V_{\fg'_{1}}(\Res_{\mu}(M))$.  By the rank variety description it immediately follows from \cref{E:Resdecomp} that $I_{k} \in \V_{\fg_{1}}(M)$.  But since $I_{k}$ is a rank $k$ matrix and since $ \V_{\fg_{1}}(M)$ is a $G_{0}$-stable closed subvariety of $\fg_{1}$, we have 
\begin{equation}\label{E:Mvariety}
\overline{G_{0}.I_{k}}=\overline{(\fg_{1})_{k}} \subseteq \V_{\fg_{1}}(M).
\end{equation}
Combining this with \cref{E:lowerboundKac} we obtain the desired inequality
\[
c_{\mathcal{F}}(K(\lambda))=c_{\mathcal{F}}(M) \geq \dim \V_{\fg_{1}}(M) \geq \dim \overline{(\fg_{1})_{k}}.
\]

We now consider the reverse inequality.  By \cref{P:complexityext} we have
\begin{equation}\label{E:kaccomplexityext}
c_{\F}(K(\lambda)) = r\left(\Ext^{\bullet}_{(\g,\g_{0})}\left( K(\lambda),\bigoplus  L(\mu)^{\,\dim P(\mu)}\right)\right), 
\end{equation} where the direct sum is over all simple modules in the block which contains $K(\lambda)$.  
However, for fixed $d$, 
\begin{equation}\label{E:kacextn}
\dim \Ext^{d}_{(\g,\g_{0})}(K(\lambda),\bigoplus  L(\mu)^{\,\dim P(\mu)}) = \sum \dim P(\mu) \cdot \dim \Ext^{d}_{(\g,\g_{0})}(K(\lambda),  L(\mu)).
\end{equation}  If $P_{\bullet} \to K(\lambda)$ is a minimal projective resolution, then $\Ext^{d}_{(\g,\g_{0})}(K(\lambda),  L(\mu)) = \Hom_{\F}( P_{d},\linebreak[0] L(\mu))$ being nonzero implies $P(\mu)$ is a summand of $P_{d}$.  By \cref{T:Projupperbound} this implies that $\dim P(\mu) \leq Cd^{(m+n-k-1)k}$ for some constant $C$ which depends only on $m,n$, and $\lambda$.  Thus,   
\begin{equation}\label{E:kacextn2}
\dim \Ext^{d}_{(\g,\g_{0})}(K(\lambda),\bigoplus  L(\mu)^{\,\dim P(\mu)}) \leq    Cd^{(m+n-k-1)k}  \dim  \Ext^{d}_{(\g,\g_{0})}(K(\lambda),\bigoplus  L(\mu)).
\end{equation}  Therefore, it suffices to prove 
\begin{equation}\label{E:Kacboundreduction}
\dim \Ext^{d}_{(\g,\g_{0})}(K(\lambda),\bigoplus  L(\mu)) \leq Kd^{k-1}
\end{equation}
for some constant $K$, where the direct sum is over all simple modules in the block containing $K(\lambda)$.

Because the left hand side of \cref{E:Kacboundreduction} is invariant under Serganova's categorical equivalence between blocks, we may assume without loss of generality that $m=n=k$ and 
that $K(\lambda)$ is a Kac module in the principal block for $\gl (k|k)$.  We now consider when 
\begin{equation}\label{E:nonzeroext}
\dim \Ext^{d}_{(\g,\g_{0})}(K(\lambda),L(\sigma)) \neq 0.
\end{equation}
Put $\lambda = \sum_{i=1}^{2k}\lambda_{i}\varepsilon_{i}$ and $\sigma= \sum_{i=1}^{2k} \sigma_{i}\varepsilon_{i}$. As \cref{E:nonzeroext} is nonzero for only finitely many $\sigma$, by tensoring with sufficently many copies of the one-dimensional Berezinian representation we may assume without loss of generality that $(\lambda_{1}, \dotsc , \lambda_{k})$ and $(\sigma_{1}, \dotsc , \sigma_{k})$ are always partitions.  By \cref{E:KLdegreebounds}, the nonvanishing of \cref{E:nonzeroext}  implies that $|\sigma| = |\lambda| + d +b$ where $b \in \{  0, \dotsc , \dim \fg_{-1} =k^{2} \}$.  Therefore, 
\[
|\lambda| + d \leq |\sigma| \leq |\lambda| + d + k^{2}.
\]  That is, in order for \cref{E:nonzeroext}  not to vanish, $(\sigma_{1}, \dotsc , \sigma_{k})$ must be a partition of an integer between $|\lambda| + d$ and $|\lambda| + d+k^{2}$.   

Now the number of partitions of $i$ into not more than $k$ parts is bounded by $C_{1}i^{k-1}$, where $C_{1}$ is a constant depending only on $k$ \cite[Corollary 15.1]{nathanson}.  Furthermore, the dimension of $ \Ext^{d}_{(\g,\g_{0})}(K(\lambda),L(\sigma))$ is the coefficient of a Kazhdan-Lusztig polynomial of the type considered in \cref{S:KLpolys}. By \cref{T:finitenumberofKLpolys} the coefficients of these polynomials are uniformly bounded by some constant $C_{2}$.  Taken together, these observations imply  
\begin{align*}
\dim \Ext^{d}_{(\g,\g_{0})}(K(\lambda),\bigoplus  L(\mu)) &= \sum \dim \Ext^{d}_{(\g,\g_{0})}(K(\lambda), L(\mu)) \\
   & \leq C_{1}\cdot C_{2} \cdot \sum_{i=|\lambda| + d}^{ |\lambda| + d + k^{2}} i^{k-1} \\
   & \leq K d^{k-1},
\end{align*} where $K$ is some constant depending only on $m$, $n$, $k$ and $\la$.  This verifies \cref{E:Kacboundreduction}, and hence completes the proof.
\end{proof}

\subsection{}\label{SS:dufloserganova} Let 
\[
\X = \left\{x \in \fg_{\1} \mid [x,x] = 0 \right\}.
\]  If $M$ is in $\F_{(\fg , \fg_{0})}$, then Duflo and Serganova \cite{dufloserganova} define\footnote{The definition given here is different from but equivalent to the one originally given by Duflo and Serganova (cf.\ \cite[Section 3.6]{BKN2}).} an associated variety by 
\[
\X_{M} = \left\{x \in \X  \mid  M \text{ is not projective as a $U(\langle x \rangle)$-module} \right\} \cup \{ 0 \}
\] and show their varieties capture a number of interesting features of $\F$. 

In the next result we compute the Duflo-Serganova varieties for the Kac and dual Kac modules.  We also indicate how the varieties $\X_{K^{\pm}(\lambda)}$ and $\V_{(\fg , \fg_{0})}(K^{\pm}(\lambda))$ measure the complexity of $K^{\pm}(\lambda)$ in $\F$.

\begin{theorem}\label{T:Kacvarieties}  Let $K^{\pm}(\lambda)$ be a Kac (resp.\ dual Kac) module for $\gl (m|n)$ with $\atyp (\lambda) = k$.  Then 
\begin{itemize} 
\item[(a)] $\X_{K^{\pm}(\lambda)}=\V_{\fg_{\pm 1}}(K^{\pm}(\lambda)) = \overline{(\fg_{\pm 1})_{k}}$;
\item[(b)] $c_{\F}\left(K^{\pm}(\lambda) \right)= \dim \X_{K^{\pm}(\lambda)} + \dim \V_{(\fg , \fg_{0})}\left(K^{\pm}(\lambda) \right)$.
\end{itemize} 
\end{theorem}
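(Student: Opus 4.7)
My strategy is to prove part (a) by computing each of the three claimed varieties separately, and then derive (b) from (a), \cref{T:arbKacmodule}, and a vanishing for the relative support variety.  I focus on $K^+(\lambda)$; the argument for $K^-(\lambda)$ is identical after swapping $\fg_1$ and $\fg_{-1}$.

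For $\V_{\fg_1}(K^+(\lambda)) = \overline{(\fg_1)_k}$: since $K^+(\lambda)$ carries a $\fg$-module and hence a $G_0$-module structure, $\V_{\fg_1}(K^+(\lambda))$ is closed and $G_0$-stable in $\fg_1$, so by \cref{SS:geometry} equals $\overline{(\fg_1)_r}$ for some $r$.  The bound $r \leq k$ follows because $U(\fg)$ is free over $U(\fp^+)$ by PBW, so a projective resolution in $\F_{(\fg, \fg_0)}$ restricts to one in $\F_{(\fp^+, \fg_0)}$; combining \cref{T:psupports} with \cref{T:arbKacmodule} gives $\dim \V_{\fg_1}(K^+(\lambda)) = c_{\F_{(\fp^+, \fg_0)}}(K^+(\lambda)) \leq c_{\F}(K^+(\lambda)) = (m+n)k - k^2 = \dim \overline{(\fg_1)_k}$.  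For the lower bound $r \geq k$, I adapt the direct-summand argument in the proof of \cref{T:KacinthePrincipalBlock} to the $\gl(k|k)$-subsuperalgebra $\mathfrak{s} \subseteq \fg$ spanned by the matrix units on the indices $\{i_1, \dotsc, i_k, j_1, \dotsc, j_k\}$ coming from the atypical roots $\{\varepsilon_{i_t} - \varepsilon_{j_t}\}$ of $\lambda$.  The rank-$k$ element $x = \sum_{t=1}^{k} E_{i_t, j_t} \in \mathfrak{s}_1$ plays the role of $I_k$; the atypicality relations $(\lambda+\rho,\, \varepsilon_{i_t} - \varepsilon_{j_t}) = 0$ ensure that $L_0(\lambda)$, restricted to the appropriate diagonal Levi inside $\mathfrak{s}_0$, contains a trivial summand, from which one extracts a $U(\langle x\rangle)$-summand of $K^+(\lambda)$ that contains $\C$ and is therefore not $U(\langle x\rangle)$-free.

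To establish $\X_{K^+(\lambda)} = \V_{\fg_1}(K^+(\lambda))$, the inclusion $\V_{\fg_1}(K^+(\lambda)) \subseteq \X_{K^+(\lambda)}$ is automatic since $\fg_1 \subseteq \X$.  For the reverse inclusion I must show that any $x = x_{-1} + x_1 \in \X$ (so $[x_{-1}, x_1]=0$) with $x_{-1} \neq 0$ lies outside $\X_{K^+(\lambda)}$.  The key tool is the $\C^{\times}$-action $\phi_z$ on $\fg$ coming from the $\Z$-grading, $\phi_z(y) = z^{i} y$ for $y \in \fg_i$; this is a Lie superalgebra automorphism fixing $\fg_0$ pointwise and preserving $\fp^+$, and the map $y_{-1} \cdots y_{-r} \otimes v \mapsto z^{-r}\, y_{-1} \cdots y_{-r} \otimes v$ is a $\fg$-module isomorphism $K^+(\lambda) \xrightarrow{\sim} \phi_z^* K^+(\lambda)$.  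For the family $y_t = x_{-1} + t x_1 \in \X$, this isomorphism transports the $y_t$-action on $K^+(\lambda)$ into a scalar multiple of the $y_{z^2 t}$-action, so the condition ``$y_t \in \X_{K^+(\lambda)}$'' depends only on the $\C^{\times}$-orbit of $t$, hence is constant on $\C^{\times}$.  Since $K^+(\lambda)$ is $U(\fg_{-1})$-free, $y_0 = x_{-1} \notin \X_{K^+(\lambda)}$; closedness of $\X_{K^+(\lambda)}$ gives $y_t \notin \X_{K^+(\lambda)}$ for $t$ in a neighborhood of $0$, and the $\C^{\times}$-invariance propagates this to $y_1 = x \notin \X_{K^+(\lambda)}$.

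For part (b), given (a) and \cref{T:arbKacmodule}, the identity reduces to $\dim \V_{(\fg, \fg_0)}(K^+(\lambda)) = 0$.  Using Frobenius reciprocity $\Ext^{\bullet}_{\F}(K^+(\lambda), -) \cong \Ext^{\bullet}_{\F_{(\fp^+, \fg_0)}}(L_0(\lambda), -)$, the Yoneda action of $R = H^{\bullet}(\fg, \fg_0; \C)$ on $\Ext^{\bullet}_{\F}(K^+(\lambda), K^+(\lambda))$ is implemented through the restriction map $R \to H^{\bullet}(\fp^+, \fg_0; \C) \cong S^{\bullet}(\fg_1^*)^{G_0}$.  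For $\gl(m|n)$ this invariant ring is $\C$, since the diagonal scalar subgroup of $G_0 = GL(m) \times GL(n)$ acts on $\fg_1 \cong \operatorname{Mat}(m,n)$ by arbitrary nonzero scalars, forcing any positive-degree polynomial invariant to vanish; hence $R^{>0} \subseteq J_{(\fg, \fg_0)}(K^+(\lambda))$ and $\V_{(\fg, \fg_0)}(K^+(\lambda)) = \{0\}$.  The main obstacle in this plan will be the lower bound for $\V_{\fg_1}(K^+(\lambda))$ in the first step, where the atypicality hypothesis must be carefully invoked to produce the required trivial summand in $L_0(\lambda)$ under the diagonal Levi of $\mathfrak{s}_0$.
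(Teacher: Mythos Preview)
Your treatment of the equality $\X_{K^{+}(\lambda)}=\V_{\fg_{1}}(K^{+}(\lambda))$ and of part (b) is essentially the paper's argument: the paper likewise uses the $\Z$-grading scaling (via a $\Z$-action rather than your $\C^{\times}$-action) to push an element of $\X_{K^{+}(\lambda)}$ into $\fg_{-1}$ and then invokes $\V_{\fg_{-1}}(K(\lambda))=\{0\}$, and for (b) simply cites \cite[Corollary~3.3.1]{BKN2} for $\V_{(\fg,\fg_{0})}(K^{\pm}(\lambda))=\{0\}$ where you give a direct invariant-theoretic computation.  Your upper bound $\dim\V_{\fg_{1}}(K^{+}(\lambda))\le\dim\overline{(\fg_{1})_{k}}$ via \cref{T:psupports} and \cref{T:arbKacmodule} also matches.

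The genuine gap is the lower bound for $\V_{\fg_{1}}(K^{+}(\lambda))$.  Your plan to transplant the proof of \cref{T:KacinthePrincipalBlock} to the subalgebra $\mathfrak{s}\cong\gl(k|k)$ built on the atypical indices fails on two counts.  First, atypicality says $(\lambda+\rho)_{i_{t}}=-(\lambda+\rho)_{j_{t}}$, a condition on $\lambda+\rho$, not on $\lambda$; there is no reason for $L_{0}(\lambda)|_{\mathfrak{s}_{0}}$ to have a summand of the form $S\boxtimes S^{*}$, which is exactly what produces the trivial $\Delta\fg_{0}$-summand in the principal-block argument.  Second, even granting such a summand, the inclusion $[\fg_{-1},I_{k}]\subseteq\Delta\fg_{0}$ used in \cref{T:KacinthePrincipalBlock} breaks down: for $E_{j,i}\in\fg_{-1}$ with $i\notin\{i_{1},\dots,i_{k}\}$ or $j\notin\{j_{1},\dots,j_{k}\}$ the bracket $[E_{j,i},x]$ lands outside $\mathfrak{s}$, so the $U(\langle x\rangle)$-decomposition of $K^{+}(\lambda)=U(\fg_{-1})\otimes L_{0}(\lambda)$ cannot be controlled as you hope.

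The paper circumvents this by recycling work from the proof of \cref{T:arbKacmodule}.  There, Serganova's translation functors carry $K(\lambda)$ to a module $M$ with $\Res_{\mu}(M)$ a Kac module in the principal block of $\gl(k|k)$, and \cref{T:KacinthePrincipalBlock} gives $I_{k}\in\V_{\fg_{1}}(M)$, hence $\overline{(\fg_{1})_{k}}\subseteq\V_{\fg_{1}}(M)$.  The new observation for \cref{T:Kacvarieties} is that each $T_{i}$ is a block projection of tensoring with a finite-dimensional module, and since $\V_{\fg_{1}}$ is a rank variety it satisfies the tensor product rule; together with the fact that each $T_{i}$ is an equivalence with translation-functor inverse, this forces $\V_{\fg_{1}}(K(\lambda))=\V_{\fg_{1}}(M)\supseteq\overline{(\fg_{1})_{k}}$.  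Replace your direct lower-bound argument with this one.
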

\begin{proof}  We will restrict our attention to Kac modules in the proof; the arguments for dual Kac modules are similar. 

(a) We first prove the second equality.  
Since the $\fg_{1}$-support varieties have the tensor product property, it follows by an argument similar to the one given for complexity in \cref{SS:serganovablockequivalence} that 
\[
\V_{\fg_{1}}(K(\lambda)) = \V_{\fg_{1}}(M),
\] where $M=T_{t} \circ \dotsb \circ T_{1}(K(\lambda))$ is the $\gl (m|n)$ module given in the proof of \cref{T:arbKacmodule}.   This along with \cref{E:Mvariety} implies we have 
\[
\overline{(\fg_{1})_{k}} \subseteq \V_{\fg_{1}}(K(\lambda)).
\] On the other hand, by \cref{E:lowerboundKac}  and \cref{T:arbKacmodule} we have that 
\[
\dim \V_{\fg_{1}} \left( K(\lambda) \right) \leq c_{\F (\fg, \fg_{0})} \left(K(\lambda)  \right) = \dim \overline{(\fg_{1})_{k}}.
\] Now since $\dim \V_{\fg_{1}} \left( K(\lambda) \right)$ is a closed $G_{0}$-stable subvariety of $\fg_{1}$, the description of the $G_{0}$-orbits given in \cref{SS:geometry} implies the two varieties are equal. 

We now consider the first equality.  By definition we have 
\[
\V_{\fg_{1}}(K(\lambda)) =  \X_{K(\lambda)} \cap \fg_{1} \subseteq \X_{K(\lambda)}.
\] To prove that the inclusion is in fact an equality we argue by contradiction.  Say $y \in  \X_{K(\lambda)}$ but $y \notin \fg_{1}$.  As $\fg_{\1}= \fg_{-1} \oplus \fg_{1}$, we can write $y= y_{-1} + y_{1}$, with $y_{k} \in \fg_{k}$ for $k=-1,1$ and, by assumption, $y_{-1} \neq 0$.  

 Fix $a \in \R$ with $0<a<1$.  Since $\fg$ is $\Z$-graded we have that $U(\fg )$ is $\Z$-graded and, in turn, $K(\lambda)$ inherits a $\Z$-grading.   We can then define an action of $\Z$ (written multiplicatively with fixed generator $t$) on $\fg$ (resp.\ $K(\lambda)$) by $t.x = a^{l}x$ for $x \in \fg_{l}$ (resp.\ $t.m = a^{l}m$ for $m \in K(\lambda)_{l}$), where $l \in \Z$.  We note that $t.(xm) = (t.x)(t.m)$ for all $x \in \fg$ and $m \in K(\lambda)$.   Now by the definition of $\X_{K(\lambda)}$ and \cite[Proposition 5.2.1]{BKN1} it follows that when $K(\lambda)$ is considered as a $\langle y \rangle$-module, a trivial module appears as a direct summand; say it is spanned by $m \in K(\lambda)$.  We then check that $t.m$ spans a trivial direct summand of $K(\lambda)$ as a $\langle t.y \rangle$-module.  Hence $t.y \in \X_{K(\lambda)}$. Thus $\X_{K(\lambda)}$ is stable under the action of $\Z$ on $\fg_{\1}$. 
Since $\X_{K(\lambda)}$ is stable under the action of $t$ it follows that $t^{l}y = a^{-l}y_{-1} + a^{l}y_{1} \in \X_{K(\lambda)}$ for all $l > 0$.  Since $\X_{K(\lambda)}$ is also conical we can scale by $a^{l}$ and see that $y_{-1}+a^{2l}y_{1}\in \X_{K(\lambda)}$
for all $l > 1$.  However, as $\X_{K(\lambda)}$ is closed it follows by letting $l$ go to infinity that $y_{-1} \in \X_{K(\lambda)}$.  That is, by definition we have
\[
0 \neq y_{-1} \in  \V _{\fg_{-1}}(K(\lambda)).
\]  But this contradicts the fact that $\V _{\fg_{-1}}(K(\lambda)) = \{0 \}$ by \cite[Theorem 3.3.1]{BKN3}.  

For part (b), observe that we have proved that $c_{\mathcal F}\left(K^{\pm}(\lambda) \right)=\dim \X_{K^{\pm}(\lambda)}$ by part (a) and \cref{T:arbKacmodule}. The statement now follows 
by using the fact that $\V_{(\fg , \fg_{0})}\left(K^{\pm}(\lambda)\right )=\{ 0 \}$ by \cite[Corollary 3.3.1]{BKN2}. 
\end{proof}

\section{An Alternative to Support Varieties} \label{S:polytope}

\subsection{A Polytope Calculation}\label{SS:volumecalc} When computing the complexity of Kac modules a key ingredient is the lower bound provided by the dimension of support varieties.  However, for simple modules the known geometric tools are insufficient (see \cref{SS:gl11example}).  Thus we need to find a suitable replacement.  To do so, we use Ehrhart's theorem on counting lattice points in polytopes to obtain ``enough'' highest weights to provide an effective lower bound.

\begin{lemma}\label{L:CountingWeights} Fix an integer $k>1$. For any integer $d>0$, let $\tilde{S}(d)$ denote the set of all points $(b,a)=(b_{1}, \dotsc , b_{k}, a_{1}, \dotsc , a_{k}) \in \Z^{2k}$ which satisfy the following conditions.

First, we require the equality
\begin{equation}\label{E:ConditionP2}
(b_{1}+\dotsb + b_{k})-2(a_{1}+\dotsb + a_{k})  =  d.
\end{equation}  

In addition, we require the following inequalities to be satisfied:
\begin{equation} \label{E:Sinequalities}
\begin{aligned}
b_{u}-b_{u+1} &\geq d/2k^{2},  \\
b_{1} &\leq -d/2k^{2}, \\
a_{u}-a_{u+1} &\geq 0, \\
a_{1} &\leq 0,\\
0 \leq  ( b_{1}+\dotsb +b_{k})&-(a_{1}+\dotsb + a_{k})   \leq d, \\
a_{v} &\leq b_{v}
\end{aligned}
\end{equation}
 where $u=1, \dotsc , k-1$, and $v=1, \dotsc , k$.

 As a function of $d$, the number of elements of $\tilde{S}(d)$ is bounded below by a polynomial, $Q(d)$, of degree $2k-1$ with positive leading coefficient.
\end{lemma}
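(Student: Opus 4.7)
The plan is to realize $\tilde{S}(d)$ as the set of lattice points in the $d$-fold dilation of a fixed rational polytope and then apply Ehrhart's theorem. Let $P \subset \R^{2k}$ be the polytope cut out by the same equation and inequalities as in the statement of the lemma but with every occurrence of $d$ replaced by $1$ (so, e.g., $b_u - b_{u+1} \geq d/2k^2$ becomes $\beta_u - \beta_{u+1} \geq 1/2k^2$). Since every condition scales linearly with $d$, the map $(b,a) \mapsto (b/d,\, a/d)$ gives a bijection $\tilde{S}(d) \leftrightarrow (dP) \cap \Z^{2k}$. The polytope $P$ is rational and lies inside the affine hyperplane $H = \{\sum \beta_u - 2\sum \alpha_u = 1\}$, so $\dim P \leq 2k-1$; boundedness follows quickly from $\alpha_u \leq 0$ together with $\sum \alpha_u = (\sum \beta_u - 1)/2 \geq -1$, giving $\alpha_u \in [-1,0]$, and similarly for the $\beta_u$.

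The key step is to show that $P$ has dimension exactly $2k-1$, i.e., that $P$ has nonempty relative interior. To produce such an interior point I would choose any strictly decreasing sequence $\beta_1 > \cdots > \beta_k$ satisfying $\beta_1 < -1/(2k^2)$ and with consecutive gaps strictly exceeding $1/(2k^2)$, arranged so that $\sum \beta_u$ lies in the open interval $\left(-1,\, -(k+1)/(4k)\right)$; this interval is nonempty precisely when $k \geq 2$, and any sufficiently small perturbation of $\beta_u = -u/(2k^2)$ producing a sum in this range suffices. Then set $c = (\sum \beta_u + 1)/(2k) > 0$ and $\alpha_u = \beta_u - c$. A direct verification confirms that all inequalities hold strictly, including the two-sided bound $0 < \sum \beta_u - \sum \alpha_u = kc < 1$, and the equation $\sum \beta_u - 2\sum \alpha_u = 1$ is satisfied by construction.

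Finally, I would invoke Ehrhart's theorem for rational polytopes: the counting function $L_P(d) = |(dP) \cap \Z^{2k}|$ is a quasi-polynomial in $d$ of degree $\dim P = 2k-1$ whose leading coefficient is the constant equal to the relative volume of $P$ with respect to the lattice induced on $H$. Since $P$ has nonempty relative interior, this volume is strictly positive, so there exist constants $c_0 > 0$ and $C_0 \geq 0$ with
\[
|\tilde{S}(d)| = L_P(d) \geq c_0 d^{2k-1} - C_0 d^{2k-2}
\]
for all $d \geq 1$, and $Q(d) := c_0 d^{2k-1} - C_0 d^{2k-2}$ is the desired polynomial (for the small values of $d$ where $Q(d)$ may be non-positive, the inequality is trivially satisfied since $|\tilde{S}(d)| \geq 0$). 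The main obstacle in the argument is the construction of the interior point: the two-sided constraint on $\sum b_u - \sum a_u$ interacts delicately with the equation and with the lower bounds on the consecutive gaps $\beta_u - \beta_{u+1}$, and the choice of $\alpha_u$ as a uniform downward translate of $\beta_u$ by the carefully chosen constant $c$ is what makes the sum bookkeeping come out right while keeping every inequality strict.
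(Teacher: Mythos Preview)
Your proposal is correct and follows essentially the same approach as the paper: both realize $\tilde{S}(d)$ as the lattice points in the $d$-fold dilate of a fixed rational polytope $P$ inside the hyperplane $\sum\beta_u - 2\sum\alpha_u = 1$, exhibit an explicit interior point to show $\dim P = 2k-1$, and then invoke Ehrhart's theorem for rational polytopes to conclude. The only differences are cosmetic---your interior point is built by taking $\alpha_u = \beta_u - c$ for a uniform shift $c$, whereas the paper writes down explicit parameters $\delta,\delta'$ and solves for them; and you extract the lower-bounding polynomial as $c_0 d^{2k-1} - C_0 d^{2k-2}$, whereas the paper takes the coefficientwise minimum over the constituents of the quasipolynomial. (One small slip: the interval $\bigl(-1,\,-(k+1)/(4k)\bigr)$ is actually nonempty for all $k\geq 1$, not only $k\geq 2$, but since the lemma assumes $k>1$ this is harmless.)
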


\begin{proof}  Let $\mathcal{H}$ be the affine hyperplane in $\R^{2k}$ defined by \cref{E:ConditionP2} when $d=1$.  Let $\mathcal{P}$ be the region in $\mathcal{H}$ defined by points in $\mathcal{H}$ which are simultaneous solutions to the inequalities \cref{E:Sinequalities} when $d=1$.  Then $\mathcal{P}$ is a polytope within $\mathcal{H}$ and hence is of dimension not more than $2k-1$.  To see that $\mathcal{P}$ is of dimension $2k-1$, it suffices to provide a point which simultaneously satisfies \cref{E:ConditionP2} when $d=1$ and \emph{strictly} satisfies the  inequalities \cref{E:Sinequalities} when $d=1$.  That is, that there exists points in the interior of $\mathcal{P}$.  To see that such a point exists, let $\delta > 1/2$ and $ 0< \delta'  <k$ be real numbers.  Let 
\begin{align*}
b&= \left( -(1+\delta)/k^{2}, -(1+2\delta)/k^{2}, \dotsc , -(1+k\delta)/k^{2}\right),\\
a&= \left( -(1+\delta+\delta')/k^{2}, -(1+2\delta+\delta')/k^{2}, \dotsc , -(1+k\delta+\delta')/k^{2}\right).
\end{align*} 
Then  for $u=1, \dotsc , k-1$ and  $v=1, \dotsc , k$,
\begin{gather*} 
b_{u}-b_{u+1} =  \delta/k^{2} > 1/2k^{2}, \\
b_{1} = -1/k^{2} - \delta/k^{2} < -1/2k^{2}, \\
a_{u}-a_{u+1} =    \delta/k^{2} > 0, \\
a_{1} = -(1+\delta+\delta')/k^{2} < 0, \\
0 < (b_{1}+\dotsb +b_{k})-(a_{1}+\dotsb +a_{k})= \delta'/k < 1,\\
a_{v} =  b_{v} - \delta'/k^{2} <  b_{v}.
\end{gather*} 
  Thus all the inequalities are strictly satisfied for any such $\delta, \delta'$.  Furthermore, 
\begin{align*}
(b_{1}+\dotsb +b_{k})-2(a_{1}+\dotsb a_{k}) &= [(1+\delta+2\delta')+ (1+2\delta+2\delta')+ \dotsc + (1+k\delta+2\delta')]/k^{2} \\
& = \frac{1}{k}\left(1+\frac{k+1}{2}\delta + 2\delta' \right).
\end{align*}  Since $k \ge 2$, one can choose (for example) $\delta=3/5,\ \delta'=(7k-13)/20$ so that this expression equals $1$.  Therefore there is an interior point in the polytope $\mathcal{P}$.

Now for any integer $d>0$, let $d\mathcal{P}$ be the dilated polytope
\[
d\mathcal{P}= \left\{(db_{1}, \dotsc , db_{k}, da_{1}, \dotsc , da_{k}) \mid (b_{1}, \dotsc , b_{k}, a_{1}, \dotsc , a_{k}) \in \mathcal{P} \right\}.
\]  We observe that the integer lattice points of $d\mathcal{P}$ are precisely those which satisfy the conditions of the lemma.  Let $L_{\mathcal{P}}(d)$ denote the number of such points within $d\mathcal{P}$; that is, the cardinality of $\tilde S(d)$.

As the coefficients of the hyperplanes defining $\mathcal{P}$ are rational, $\mathcal{P}$ is a rational polytope of dimension $2k-1$. By Ehrhart's theorem for rational polytopes (e.g., \cite[Theorem 3.23]{beckrobins}), $L_{\mathcal{P}}(d)$ is given by a quasipolynomial of degree $2k-1$.  That is, there is a fixed positive integer $M$ independent of $d$ and a sequence of polynomials $Q_{1}(d), \dotsc , Q_{M}(d)$ each of degree $2k-1$ such that when $d \equiv i \pmod M$,  $L_{\mathcal{P}}(d) = Q_{i}(d)$. Furthermore, the leading coefficient of each polynomial $Q_{1}(d), \dotsc , Q_{M}(d)$ is the volume of $\mathcal{P}$.  

From this we can construct a single polynomial of degree $2k-1$, $Q(d)$, such that $Q_{i}(d) \geq  Q(d)$ for all $d \in \Z_{>0}$; namely, for $j=0, \dotsc , 2k-1$ we can take the coefficient of $d^{j}$ in $Q(d)$ to be the minimum among the cofficients of $d^{j}$ among the polynomials $Q_{i}(d)$. In particular, note that the leading coefficient of $Q(d)$ will again be the volume of $\mathcal{P}$, and  hence, positive.

Therefore, there exists a degree $2k-1$ polynomial with positive leading coefficient such that 
\[
|\tilde S(d)| = L_{\mathcal{P}}(d) \geq Q(d)
\] for all $d \in \Z_{> 0}$. 
\end{proof}

\begin{remark}\label{R:polytopewhenkequals1}  We note that when $k=1$ the polytope $\mathcal{P}$ degenerates to a single point, $(-1,-1)$.   As a consequence, we treat $k=1$ as a separate case in the following arguments. 
\end{remark}

\subsection{The Su-Zhang Bijection}\label{SS:generalblock}  Su and Zhang define a bijection on highest weights between a block of $\F_{(\gl (m|n), \gl (m|n)_{0})}$ of atypicality $k$ and the principal block of $\F_{(\gl(k|k), \gl(k|k)_{0})}$ \cite{suzhang}.  We now use that bijection to define a set of pairs of highest weights which will provide an effective lower bound for the complexity of a simple module.  

\subsubsection{\texorpdfstring{The $k>1$ Case}{The k>1 Case}}\label{SSS:kgreater1} If $k>1$, then we can use the set $\tilde{S}(d)$ to define a set of pairs of highest weights in a certain block for $\gl (m|n)$ as follows.   Using \cref{P:serganovasblockdescription}, let us write $\B $ for the block of $\gl (m|n)$ of atypicality $k$ and with core 
\[
\left(\left\{2m-k,2m-k-2, \dotsc , k+2 \right\}, \left\{2m-k+2, 2m-k+4,\dotsc ,2m+2n-3k \right\} \right).
\]  If $k=m$ (resp.\ $k=n$), then we intend for the left hand (resp.\ right hand) set in the core to be empty.  In particular, in the case of $m=n=k$ we take $\B$ to be the principal block.

 Set $p:=m-k$ and $q:=2m-2k$.  Define an injective map
$\zeta : \R^{k} \to \bigoplus_{i=1}^{m+n} \R \varepsilon_{i}$ by 
\begin{multline*}
\zeta(x_{1}, \dotsc , x_{k})  = p\varepsilon_{1}+(p-1)\varepsilon_{2}+\dotsb + \varepsilon_{m-k} \\
                                   +  x_{1}\varepsilon_{m-k+1} +\dotsb +x_{k}\varepsilon_{m}  \\
   -x_{k}\varepsilon_{m+1}- \dotsb - x_{1}\varepsilon_{m+k} \\
    - (q+1)\varepsilon_{m+k+1} - (q+2)\varepsilon_{m+k+2} - \dotsb - (q+n-k)\varepsilon_{m+n}.
\end{multline*}
For example, we will frequently refer in what follows to the special weight
\begin{equation}\label{E:zetazero}
\nu := \zeta(0, \dotsc ,0) = p\varepsilon_{1} + \dotsb + \varepsilon_{m-k} - (q+1)\varepsilon_{m+k+1} - \dotsb - (q+n-k)\varepsilon_{m+n}.
\end{equation} Then $\nu \in X^{+}_{0}$ and $L(\nu)$ is a simple $\gl (m|n)$-module in $\B$.  More generally, observe that if $\zeta(x_{1}, \dotsc , x_{k}) \in X^{+}_{0}$, then $L(\zeta(x_{1}, \dotsc , x_{k}))$ lies in the block $\B$.

Define 
\[
\omega: \R^{2k} \to \bigoplus_{i=1}^{m+n} \R \varepsilon_{i} \times \bigoplus_{i=1}^{m+n} \R \varepsilon_{i}
\] by 
\[
\omega(x_{1}, \dotsc , x_{k}, y_{1}, \dotsc , y_{k}) = (\zeta(x_{1}, \dotsc , x_{k}), \zeta(y_{1}, \dotsc , y_{k})).
\] 
Then $\omega$ is clearly injective.

We define 
\[
S(d) = \omega(\tilde{S}(d)),
\] where $\tilde{S}(d)$ is the subset of $\Z^{2k}$ defined in \cref{L:CountingWeights}.  Then $S(d) \subset \B  \times \B$ and, since $\omega$ is injective, the cardinality of $S(d)$ equals the cardinality of $\tilde{S}(d)$.

We now introduce the bijection on highest weights defined in \cite[Theorem 3.29]{suzhang},
\begin{equation}\label{E:suzhangbijection}
\phi : \B  \to \B_{0,k|k},
\end{equation} where $\B_{0,k|k}$ denotes the principal block of $\gl (k|k)$.
 The interested reader will find the full definition in \cite{suzhang}.  However, we only require the value of this map on elements of $\B$ which appear in an element of $S(d)$. 
 For our purposes it suffices to note that for any $\zeta(x_{1}, \dotsc , x_{k}) \in \B$ we have
\[
\phi (\zeta (x_{1}, \dotsc , x_{k})) = x_{1}\varepsilon_{1} + \dotsb + x_{k}\varepsilon_{k} - x_{k}\varepsilon_{k} - \dotsb - x_{k}\varepsilon_{2k}.
\]  In particular, observe that for the weight $\nu$ defined in \cref{E:zetazero},
\[
\phi(\nu) = 0.
\]

It is easy to see using \cite[(3.13)]{suzhang} or by the definition of the Bruhat order in \cite{brundan1} that for all $\zeta(x_{1}, \dotsc , x_{k})$ and $\zeta(y_{1}, \dotsc , y_{k})$ which lie in $\B$ we have
\begin{equation}\label{E:suzhangpartialorder}
\zeta(x_{1}, \dotsc , x_{k}) \preccurlyeq \zeta(y_{1}, \dotsc , y_{k}) \text{ if and only if } \phi (\zeta(x_{1}, \dotsc , x_{k})) \preccurlyeq \phi (\zeta(y_{1}, \dotsc , y_{k})),
\end{equation}
and
\begin{equation}\label{E:suzhanglength}
l(\zeta(x_{1}, \dotsc , x_{k})) = l(\phi(\zeta(x_{1}, \dotsc , x_{k}))) = x_{1}+\dotsb + x_{k}.
\end{equation}

Now since $S(d) \subseteq \B \times \B$ we use \cref{E:Sinequalities}, \cref{E:zetazero}, \cref{E:suzhangpartialorder} and \cref{E:suzhanglength} to see the elements $(\mu, \sigma) \in S(d)$ satisfy

\begin{equation}\label{E:BrundanConditions}
\begin{gathered}
\sigma  \preccurlyeq \mu, \\
\sigma \preccurlyeq \nu, \\
0 \leq  l(\mu) - l(\sigma) \leq d;
\end{gathered}
\end{equation}
\medskip
\begin{equation}\label{E:SchmidtConditions} 
-l(\sigma) = \frac{d-l(\mu)}{2};
\end{equation}
\medskip
\begin{equation}
\begin{aligned}
 - \mu_{m-k+1} & \geq d/2k^{2}, \\ 
\mu_{i}-\mu_{i+1} &\geq d /2k^{2}
\end{aligned}
\end{equation}
for $i=m-k+1, \dotsc , m-1$.

\subsubsection{\texorpdfstring{The $k=1$ Case}{The k=1 Case}}\label{SSS:kequal1}  We now consider the case when $k=1$.    Let $\B$ denote the block of $\gl(m|n)$ of atypicality one and with core 
\[
\left(\left\{2m-2, 2m-4, \dotsc , 2 \right\}, \left\{2m, 2m+2, \dotsc , 2m+2n-4 \right\} \right).
\]

Let $p$ and $q$ be as defined near the beginning of \cref{SSS:kgreater1} so that we have $p=m-1$ and $q=2m-2$.  For $d>6(m+n)$ and $2d/3 <  a \leq  d $, let  $b=a+m-n$ and set 
\[
\mu^{(a)}= a\varepsilon_{1} + p\varepsilon_{2} + \dotsb + \varepsilon_{m} - (q+1)\varepsilon_{m+1} - (q+2)\varepsilon_{m+2} - \dotsb - (q+n-1)\varepsilon_{m+n-1}-b\varepsilon_{m+n}.
\]

For $d>6(m+n)$ we then set
\[
S(d)= \left\{ (\mu^{(a)}, \mu^{(a)}) \mid a \in \Z, 2d/3 <  a \leq  d \right\}. 
\] Note that by our assumption on $d$ and the fact that $b \geq a >2d/3$, we have $S(d) \subset \B  \times \B$.

We will need the value of the Su-Zhang bijection $\phi : \B  \to \B_{0,1|1},$ on  weights of the form $\mu^{(a)}$. 
From the definition of $\phi$ it is easy to see that
\[
\phi \left( \mu^{(a)} \right) = (a-n+1)\varepsilon_{1} - (a-n+1)\varepsilon_{2}.
\] Finally, set $\nu \in \B$ to be the highest weight given by
\begin{equation}\label{E:nuzero}
\nu = \phi^{-1}(0).
\end{equation}   The interested reader who wishes to compute $\nu$ will need to refer to the definition of $\phi$ given in \cite{suzhang}.  However, for our purposes all we require is that it goes to $0$ under the bijection.

\subsubsection{}\label{SSS:suzhangonexts}  Finally, we record a crucial property of the Su-Zhang bijection.

\begin{lemma}\label{L:suzhangonexts}  Let $\phi : \B \to \B_{0,k|k}$ be the Su-Zhang bijection. Then for all $d \geq 0$ and all $\lambda, \mu\in \B$ we have
\begin{equation*}
\dim \Ext^{d}_{\F (\gl (m|n), \gl (m|n)_{0})}\left(K(\lambda), L(\mu) \right) = \dim \Ext^{d}_{\F (\gl (k|k), \gl (k|k)_{0})}\left(K(\phi (\lambda)), L(\phi (\mu)) \right).
\end{equation*}
\begin{equation*}
\dim \Ext^{d}_{\F (\gl (m|n), \gl (m|n)_{0})}\left(L(\lambda), L(\mu) \right) = \dim \Ext^{d}_{\F (\gl (k|k), \gl (k|k)_{0})}\left(L(\phi (\lambda)), L(\phi (\mu)) \right).
\end{equation*}

\end{lemma}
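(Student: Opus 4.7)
The plan is to derive both equalities from the equality of ``naive'' Kazhdan--Lusztig polynomials $p_{\lambda,\mu}(q) = p_{\phi(\lambda),\phi(\mu)}(q)$ for $\lambda,\mu \in \B$, which is already recorded in the course of the proof of \cref{T:finitenumberofKLpolys} as a consequence of \cite[Theorem 3.29]{suzhang} together with the fact that the Su--Zhang bijection satisfies $l(\gamma) = l(\phi(\gamma))$ for every $\gamma \in \B$.

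First, unpacking the definition
\[
p_{\lambda,\mu}(q) = q^{l(\mu) - l(\lambda)}\sum_{n \geq 0} \dim \Ext^{n}_{\F(\gl(m|n), \gl(m|n)_{0})}(K(\lambda), L(\mu))\,q^{-n},
\]
and the analogous expression for $p_{\phi(\lambda),\phi(\mu)}(q)$ in $\F(\gl(k|k), \gl(k|k)_{0})$, the identity $p_{\lambda,\mu}(q) = p_{\phi(\lambda),\phi(\mu)}(q)$ combined with $l(\lambda) = l(\phi(\lambda))$ and $l(\mu) = l(\phi(\mu))$ forces the individual coefficients to coincide. Comparing coefficients of $q^{l(\mu)-l(\lambda) - d}$ immediately yields the first claimed equality for every $d \geq 0$.

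For the second equality I would apply Brundan's Ext formula \cite[Corollary 4.52]{brundan1} (already invoked in the proof of \cref{T:Projupperbound}),
\[
\dim \Ext^{d}_{\F}(L(\lambda),L(\mu)) = \sum_{i+j=d}\ \sum_{\sigma \in \B} \dim \Ext^{i}_{\F}(K(\sigma),L(\lambda))\, \dim \Ext^{j}_{\F}(K(\sigma),L(\mu)),
\]
together with its counterpart for the principal block of $\gl(k|k)$. The first equality of the lemma transforms each factor on the right-hand side into the corresponding factor with $\sigma,\lambda,\mu$ replaced by $\phi(\sigma),\phi(\lambda),\phi(\mu)$. Since $\phi$ is a bijection $\B \to \B_{0,k|k}$, reindexing the outer sum by $\tau = \phi(\sigma)$ turns it into the sum over $\tau \in \B_{0,k|k}$, which by the Brundan formula for $\gl(k|k)$ is precisely $\dim \Ext^{d}_{\F(\gl(k|k),\gl(k|k)_{0})}(L(\phi(\lambda)),L(\phi(\mu)))$.

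The only subtlety is confirming that the ingredients above genuinely transfer across the two categories: that $\phi$ really does preserve the length function $l$ on all of $\B$, and that Brundan's Ext formula applies in both blocks. Both facts are available in the cited literature, so the argument reduces to the bookkeeping above and should require no further input.
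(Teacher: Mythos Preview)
Your proposal is correct and follows essentially the same route as the paper: the paper's proof is a one-line citation of \cite[Theorem~3.29(2)]{suzhang} and \cite[Corollary~4.52]{brundan1}, and you have simply unpacked how these two ingredients combine---using the first (together with length preservation) to match the Kazhdan--Lusztig polynomial coefficients for the $(K,L)$ Exts, and then feeding that into Brundan's sum formula for the $(L,L)$ Exts.
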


\begin{proof}  Both results follow from \cite[Theorem 3.29(2)]{suzhang} and \cite[Corollary 4.52]{brundan1}.
\end{proof}

\subsection{A Lower Bound on Dimensions of Projectives}\label{SS:lowerboundonprojgeneral}

\begin{lemma}\label{L:projlower}  Let $\B$ be the block given in the previous section and let $(\mu, \sigma) \in S(d) \subset \B  \times \B$.  Then, for $d$ sufficiently large,
\[
\dim P(\mu)  \geq  Cd^{(m+n-k-1)k },
\] where $C$ is a positive constant which is independent of $\mu$ and $\sigma$.
\end{lemma}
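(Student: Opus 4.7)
The plan is to bound $\dim P(\mu)$ from below by $\dim L_0(\mu)$ via \cref{E:boundingprojectives} and then estimate $\dim L_0(\mu)$ by Weyl's dimension formula
\[
\dim L_0(\mu) = \prod_{\alpha \in \Phi_m^+} \frac{(\mu + \rho_m, \alpha)}{(\rho_m, \alpha)} \cdot \prod_{\alpha \in \Phi_n^+} \frac{(\mu + \rho_n, \alpha)}{(\rho_n, \alpha)}.
\]
For $k > 1$ and $(\mu, \sigma) \in S(d)$, the weight $\mu = \zeta(b_1, \ldots, b_k)$ has coordinates at positions $m-k+1, \ldots, m$ and $m+1, \ldots, m+k$ of absolute value at least $d/(2k^2)$ by \cref{E:Sinequalities}, while all other coordinates are fixed integers determined by the block $\B$. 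The denominators $(\rho_m, \alpha)$ and $(\rho_n, \alpha)$ are positive integers bounded by $m$ and $n$ and so contribute only bounded constants.

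First, from the explicit formula for $\zeta$ and the formula for $\rho$, I would verify that the atypical positions of $\mu$ are precisely $\{m-k+1, \ldots, m\}$ and $\{m+1, \ldots, m+k\}$, with atypical roots $\varepsilon_{m-k+t} - \varepsilon_{m+k-t+1}$ for $t = 1, \ldots, k$. This identifies the partition $\Phi_m^+ = A_m \sqcup B_m \sqcup C_m$ of \cref{SS:bruhat} (and its analog for $\Phi_n^+$) with $|B_m| = (m-k)k$, $|C_m| = \binom{k}{2}$, $|B_n| = (n-k)k$, and $|C_n| = \binom{k}{2}$. Next, I would estimate each factor in Weyl's formula. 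For $\alpha \in A_m$ the factor depends only on the fixed coordinates of $\mu$, so the $A_m$ and $A_n$ products contribute an absolute positive constant. For $\alpha = \varepsilon_{m-k+u} - \varepsilon_{m-k+v} \in C_m$ with $u < v$, the numerator equals $b_u - b_v + (v-u) \geq (v-u)d/(2k^2)$, and analogously in $C_n$. For $\alpha = \varepsilon_s - \varepsilon_{m-k+v} \in B_m$ with $s \leq m-k$, the numerator is a bounded positive integer plus $-b_v \geq d/(2k^2)$, and similarly for $B_n$.

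Multiplying these estimates, for $d$ sufficiently large we obtain
\[
\dim L_0(\mu) \geq C\, d^{(m-k)k + (n-k)k + 2\binom{k}{2}} = C\, d^{(m+n-k-1)k},
\]
with $C > 0$ independent of the choice of $(\mu, \sigma) \in S(d)$. The case $k=1$ (cf.\ \cref{R:polytopewhenkequals1}) is handled by the same argument applied to the weights $\mu^{(a)}$ of \cref{SSS:kequal1}: there the unique atypical pair is $(1, m+n)$, so $C_m$ and $C_n$ are empty, $|B_m| = m-1$, $|B_n| = n-1$, and the coordinates $a$ and $b = a+m-n$ both satisfy $a, b > 2d/3$; an identical estimate yields $\dim L_0(\mu^{(a)}) \geq C d^{m+n-2} = C d^{(m+n-k-1)k}$ for $k=1$. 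The main technical point is the explicit identification of the atypical positions and of the sets $A_m, B_m, C_m$ for weights in the image of $\zeta$; once that is in hand, every remaining estimate is a routine application of \cref{E:Sinequalities} and Weyl's formula.
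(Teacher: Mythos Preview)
Your proposal is correct and follows essentially the same approach as the paper's proof: reduce to $\dim L_{0}(\mu)$ via \cref{E:boundingprojectives}, apply Weyl's dimension formula, partition $\Phi_{m}^{+}$ and $\Phi_{n}^{+}$ into the sets $A,B,C$ of \cref{SS:bruhat}, observe the $A$-factors depend only on the core and hence are constant, and bound each $B$- and $C$-factor below by a constant times $d$ using the inequalities \cref{E:Sinequalities}. The paper treats $k=1$ first and does the $B_{m}$ estimate more explicitly there (showing $a-(p-t+2)>d/3$ and $b-(q+t-m)>d/3$ directly), but your identification of the atypical pair $(1,m+n)$ and the count $|B_{m}|+|B_{n}|=m+n-2$ yield the same conclusion.
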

\begin{proof}   By \cref{E:boundingprojectives} it suffices to use the Weyl dimension formula to obtain a lower bound on the dimension of $L_{0}(\mu)$:  
\begin{equation}\label{E:Weyldim}
\dim L_{0}(\mu) = \prod_{\alpha \in \Phi_{m}^{+}}\frac{(\mu + \rho_{m}, \alpha)}{(\rho_{m}, \alpha)}  \prod_{\alpha \in \Phi_{n}^{+}}\frac{(\mu + \rho_{n}, \alpha)}{(\rho_{n}, \alpha)}.
\end{equation}

We first consider the case $k=1$ (and so $p=m-1$, $q=2m-2$, and $d>6(m+n)$).  From \cref{E:Weyldim} and the definition of $\mu = \mu^{(a)}$ we have
\begin{align*}
\dim L_{0}(\mu) &\geq \prod_{\substack{\alpha= \varepsilon_{1}-\varepsilon_{t} \\ t=2, \dotsc , m}}\frac{(\mu + \rho_{m}, \alpha)}{(\rho_{m}, \alpha)}  \prod_{\substack{\alpha= \varepsilon_{t}-\varepsilon_{m+n} \\ t=m+1, \dotsc , m+n-1}}\frac{(\mu + \rho_{n}, \alpha)}{(\rho_{n}, \alpha)} \\
&= \prod_{\substack{\alpha= \varepsilon_{1}-\varepsilon_{t} \\ t=2, \dotsc , m}}\frac{a-(p-t+2)+(\rho_{m}, \alpha)}{(\rho_{m}, \alpha)}  \prod_{\substack{\alpha= \varepsilon_{t}-\varepsilon_{m+n} \\ t=m+1, \dotsc , m+n-1}}\frac{b- (q+t-m) +(\rho_{n}, \alpha)}{(\rho_{n}, \alpha)}.
\end{align*}  However, since $a > 2d/3$ and $d/3 \geq p$ we have 
\[
a- (p-t+2) \geq a- p > 2d/3 - d/3 = d/3 
\]
Similarly, since $b > 2d/3$ and $d/3 \geq q+n-1$ we have
\[
b - (q+t-m) \geq b-(q+n-1) > 2d/3 - d/3 = d/3.
\]
Substituting yields
\begin{align*}
\dim L_{0}(\mu)  &\geq \prod_{\substack{\alpha= \varepsilon_{1}-\varepsilon_{t} \\ t=2, \dotsc , m}}\frac{d/3+(\rho_{m}, \alpha)}{(\rho_{m}, \alpha)}  \prod_{\substack{\alpha= \varepsilon_{t}-\varepsilon_{m+n} \\ t=m+1, \dotsc , m+n-1}}\frac{d/3+( \rho_{n}, \alpha)}{(\rho_{n}, \alpha)} \\
&\geq C d^{m-1}d^{n-1}=Cd^{m+n-2},
\end{align*} where $C$ is a constant independent of $\mu$ and $\sigma$.  This proves the desired result when $k=1$.

We now consider the case $k>1$.  We first study the first factor in \cref{E:Weyldim}.  We have 
\begin{equation}\label{E:tripleproduct2}
 \prod_{\alpha \in \Phi_{m}^{+}}\frac{(\mu + \rho_{m}, \alpha)}{(\rho_{m}, \alpha)}=  \prod_{\alpha \in A_{m}}\frac{(\mu + \rho_{m}, \alpha)}{(\rho_{m}, \alpha)} \prod_{\alpha \in B_{m}}\frac{(\mu + \rho_{m}, \alpha)}{(\rho_{m}, \alpha)} \prod_{\alpha \in C_{m}}\frac{(\mu + \rho_{m}, \alpha)}{(\rho_{m}, \alpha)},
\end{equation} where $A_{m}, B_{m}, C_{m}$ are defined in \cref{SS:bruhat}.

If we let $Z = \max\{ (\rho_{m}, \alpha) \mid \alpha \in \Phi_{m}^{+} \}$, then we have 
\[
\prod_{\alpha \in A_{m}}\frac{(\mu + \rho_{m}, \alpha)}{(\rho_{m}, \alpha)} \geq \prod_{\alpha \in A_{m}}\frac{(\mu + \rho_{m}, \alpha)}{Z}=:C_{0}.
\] But by the definition of $A_{m}$ the value of $C_{0}$ depends only on the core of $\mu$ and not on $\mu$ itself.  Hence it depends only on $m, n$ and $\B $.

For $(\mu, \sigma) \in S(d)$ we have
\[
(\mu + \rho_{m}, \alpha) \geq (\mu, \alpha) \geq d/2k^{2}
\] for all $\alpha \in B_{m}$ and $\alpha \in C_{m}$. Taken together with the fact that $B_{m}$ has cardinality $(m-k)k$ and $C_{m}$ has cardinality $k(k-1)/2$, we see that
\[
\prod_{\alpha \in \Phi_{m}^{+}}\frac{(\mu + \rho_{m}, \alpha)}{(\rho_{m}, \alpha)} \geq C_{1}d^{(m-k)k+k(k-1)/2}
\] for some constant $C_{1}$ which is independent of $\mu$ and $\sigma$.  Similarly,
\[
\prod_{\alpha \in \Phi_{n}^{+}}\frac{(\mu + \rho_{m}, \alpha)}{(\rho_{m}, \alpha)} \geq C_{2}d^{(n-k)k+k(k-1)/2}.
\]

Combining these we see that 
\[
\dim P(\mu) \geq C_{0}C_{1}C_{2}d^{(m-k)k + k(k-1)/2 +(n-k)k +k(k-1)/2}= Cd^{(m+n-k-1)k},
\] for some constant $C$ which is independent of $\mu$ and $\sigma$, as desired.
\end{proof}

\section{\texorpdfstring{Complexity for Simple Modules}{Complexity for Simple Modules}} \label{S:simplecomplexity}

\subsection{} We first observe that for a fixed $\gl(m|n)$  any two simple modules with the same atypicality have the same complexity.

\begin{theorem}\label{T:constantatypicality}  Let $L(\lambda)$ and $L(\mu)$ be two simple modules for $\gl (m|n)$ with $\atyp (\lambda) = \atyp (\mu)$. Then 
\begin{equation}\label{E:splitting}
L(\lambda)^{*}\otimes L(\lambda) \otimes L(\mu) \cong L(\mu) \oplus U
\end{equation}
for some $\gl (m|n)$-module $U$.

Furthermore, the complexity of $L(\lambda)$ equals the complexity of $L(\mu)$.

\end{theorem}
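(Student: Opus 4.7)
The theorem has two assertions: the direct sum decomposition \cref{E:splitting} and the complexity equality. Since the second follows easily from the first by a tensor-product manipulation, the splitting is where the real content lies.

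To prove the splitting, I would use the rigid symmetric monoidal structure on the category of $\fg$-supermodules to write down canonical candidate maps
$$\iota = \operatorname{coev}\otimes \operatorname{id}_{L(\mu)} : L(\mu) \to L(\lambda)^{*}\otimes L(\lambda)\otimes L(\mu),$$
$$\pi = \operatorname{ev}\otimes \operatorname{id}_{L(\mu)} : L(\lambda)^{*}\otimes L(\lambda)\otimes L(\mu) \to L(\mu),$$
where $\operatorname{ev} : L(\lambda)^{*}\otimes L(\lambda)\to \C$ and $\operatorname{coev} : \C \to L(\lambda)^{*}\otimes L(\lambda)$ are the standard super evaluation and coevaluation. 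A computation with a homogeneous basis of $L(\lambda)$ and its dual basis shows that $\pi\circ \iota = \operatorname{sdim}(L(\lambda))\cdot \operatorname{id}_{L(\mu)}$. When this scalar is nonzero, a rescaling of $\pi$ splits $\iota$ and one obtains $L(\lambda)^{*}\otimes L(\lambda)\otimes L(\mu) \cong L(\mu) \oplus U$ with $U = \ker\pi$. To guarantee the nonvanishing of $\operatorname{sdim} L(\lambda)$ I would invoke Serganova's proof of the Kac-Wakimoto conjecture for $\gl(m|n)$, which identifies nonvanishing supercharacters precisely with maximal atypicality. Since $\atyp(\lambda)=\atyp(\mu)=k$, Serganova's block equivalence described in \cref{SS:serganovablockequivalence} reduces the general case to the principal block of $\gl(k|k)$, in which every simple module is maximally atypical.

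Given the splitting, the complexity equality follows from the standard observation that tensoring with a finite-dimensional module preserves projectives in $\F$ (via the adjunction $\Hom_{\F}(P\otimes V,-)\cong \Hom_{\F}(P,V^{*}\otimes -)$). Thus if $P_{\bullet}\to L(\lambda)$ is a minimal projective resolution, then $P_{\bullet}\otimes L(\lambda)^{*}\otimes L(\mu)$ is a projective resolution of $L(\lambda)^{*}\otimes L(\lambda)\otimes L(\mu)$ with the same rate of growth, so $c_{\F}(L(\lambda)^{*}\otimes L(\lambda)\otimes L(\mu))\le c_{\F}(L(\lambda))$. Combining with the bound $c_{\F}(L(\mu))\le c_{\F}(L(\lambda)^{*}\otimes L(\lambda)\otimes L(\mu))$ supplied by the splitting yields $c_{\F}(L(\mu))\le c_{\F}(L(\lambda))$, and interchanging the roles of $\lambda$ and $\mu$ provides the reverse inequality.

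The main obstacle in this plan is that the super-trace computation only produces a nonzero scalar at maximal atypicality, while the theorem is stated for arbitrary common atypicality. The delicate point is therefore the transfer of the splitting from $\gl(k|k)$ (where Kac-Wakimoto applies cleanly) back through Serganova's block equivalence, since the translation functors implementing that equivalence do not commute with arbitrary tensor products, so one must keep careful track of which block projections interact with $L(\lambda)^{*}\otimes L(\lambda)$.
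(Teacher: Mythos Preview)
Your overall architecture matches the paper's: first establish the splitting \cref{E:splitting}, then tensor a minimal projective resolution of $L(\lambda)$ by $L(\lambda)^{*}\otimes L(\mu)$ to get $c_{\F}(L(\mu))\le c_{\F}(L(\lambda))$, and finish by symmetry. The difficulty you yourself flag is real, however, and in the paper it is resolved by a different mechanism than the one you propose.

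Your splitting argument rests on $\pi\circ\iota=\operatorname{sdim} L(\lambda)\cdot\operatorname{id}_{L(\mu)}$, and you are right that this scalar vanishes unless $\atyp(\lambda)=\min(m,n)$. Your proposed repair---transport the question to the principal block of $\gl(k|k)$ via Serganova's equivalence and apply Kac--Wakimoto there---does not go through. The functors in \cref{SS:serganovablockequivalence} are built from translation functors (tensor then project to a block) and from $\Res_{\mu}$; neither operation is monoidal, so knowing that $\Phi(L(\lambda))^{*}\otimes\Phi(L(\lambda))\otimes\Phi(L(\mu))$ splits off $\Phi(L(\mu))$ in $\gl(k|k)$ gives no control over $L(\lambda)^{*}\otimes L(\lambda)\otimes L(\mu)$ in $\gl(m|n)$. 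Nor can you bypass the splitting and transport the complexity equality directly: as the paper's own calculations show (compare \cref{T:arbKacmodule} with \cref{T:KacinthePrincipalBlock}), complexity is \emph{not} preserved by these block equivalences.

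The paper circumvents this by replacing the ordinary superdimension with the \emph{modified dimension} of Geer--Kujawa--Patureau-Mirand. By Serganova, every simple $\gl(m|n)$-module carries an ambidextrous trace, so $L(\mu)$ determines a modified dimension $\md_{L(\mu)}$ on the tensor ideal it generates. The \emph{generalized} Kac--Wakimoto conjecture (also proved by Serganova for $\gl(m|n)$) says that $L(\lambda)$ lies in this ideal and $\md_{L(\mu)}(L(\lambda))\neq 0$ precisely because $\atyp(\lambda)=\atyp(\mu)$. A result of GKP then guarantees that the evaluation map $L(\lambda)^{*}\otimes L(\lambda)\otimes L(\mu)\to L(\mu)$ splits. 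This works directly at every atypicality, with no reduction to $\gl(k|k)$ needed; the missing idea in your proposal is exactly this passage from ordinary to modified dimensions.
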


\begin{proof}  Let $L(\lambda)$ and $L(\mu)$ be two simple modules with the same atypicality.  Let $P_{\bullet} \to L(\lambda)$ be a minimal projective resolution of $L(\lambda)$.  Tensoring this resolution on the left by $L(\lambda)^{*}$ and on the right by $L(\mu)$ we obtain a (not necessarily minimal) projective resolution of $L(\lambda)^{*} \otimes L(\lambda)\otimes L(\mu)$ with rate of growth equal to the rate of growth of $P_{\bullet}$.  Therefore, we deduce that
\begin{equation}\label{E:GKW1}
c_{\F }\left(L(\lambda) \right) \geq c_{\F }\left( L(\lambda)^{*} \otimes L(\lambda) \otimes L(\mu)\right).
\end{equation}

By \cite[Corollary~6.6]{serganova4} every simple $\gl (m|n)$-module admits an ambidextrous trace in the sense of \cite{GKP}.  In particular, $L(\mu)$ has an ambidextrous trace and by \cite[Theorem~3.3.2]{GKP} this trace defines a modified dimension function, $\md_{L(\mu)}$, on the ideal of $\F$ generated by $L(\mu)$.  By the generalized Kac-Wakimoto conjecture, stated for basic classical Lie superalgebras in \cite[Conjecture~6.3.2]{GKP} and proven for $\gl (m|n)$ in \cite[Corollary~6.7]{serganova4}, it follows that $L(\lambda)$ is in the ideal generated by $L(\mu)$ and $\md_{L(\mu)}\left( L(\lambda)\right) \neq 0$.  However, by \cite[Corollary~4.3.3]{GKP} this implies  the canonical surjection induced by the evaluation map,
\begin{equation*}
L(\lambda)^{*}\otimes L(\lambda) \otimes L(\mu) \to L(\mu),
\end{equation*}
splits.  In short, because $L(\lambda)$ and $L(\mu)$ have the same atypicality we have 
\[
L(\lambda)^{*}\otimes L(\lambda) \otimes L(\mu) \cong L(\mu) \oplus U
\] for some $\gl (m|n)$-module $U$ (where the isomorphism preserves the $\Z_{2}$-grading).  Using \cref{P:complexityext} and the additivity of $\Ext$, we then see that 
\begin{equation}\label{E:GKW2}
c_{\F }\left( L(\lambda)^{*}\otimes L(\lambda) \otimes L(\mu) \right) \geq c_{\F } \left(L(\mu) \right).
\end{equation}  Combining \cref{E:GKW1} and \cref{E:GKW2}, we obtain 
\[
c_{\F }\left(L(\lambda) \right) \geq c_{ \F}\left(L(\mu) \right).
\]  The argument is symmetric under switching $L(\lambda)$ and $L(\mu)$ and so we have equality.
\end{proof}

\subsection{} We now compute the complexity of a simple $\gl (m|n)$-module of atypicality $k$. The end result will be the following theorem.  

\begin{theorem}\label{T:simplemodulecomplexity} Let $L(\lambda)$ be a simple $\gl (m|n)$-module of atypicality $k$.  Then \[
c_{\mathcal F}(L(\lambda))=\dim  \overline{(\fg_{1})_{k}} +k =  (m+n)k-k^{2}+k = \dim \X_{L(\lambda)} + \dim \V_{(\fg ,\fg_{0})}(L(\lambda)).
\]
\end{theorem}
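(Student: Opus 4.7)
The plan is to first compute $c_{\F}(L(\lambda))=(m+n)k-k^{2}+k$ and then identify this number with the geometric expression. By \cref{T:constantatypicality}, all simples of atypicality $k$ have the same complexity, so it suffices to compute $c_{\F}(L(\nu))$ for the specific weight $\nu$ in the block $\B$ constructed in \cref{SSS:kgreater1} (or \cref{SSS:kequal1} when $k=1$), chosen so that the Su-Zhang bijection $\phi : \B \to \B_{0,k|k}$ satisfies $\phi(\nu)=0$ and hence $l(\nu)=0$. By \cref{P:complexityext} together with the vanishing of $\Ext$ between different blocks, the complexity equals the rate of growth of $\Ext^{\bullet}_{(\fg,\fg_{0})}(L(\nu),\bigoplus_{\mu\in\B}L(\mu)^{\dim P(\mu)})$. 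Brundan's formula \cite[Corollary 4.52]{brundan1} expresses
\[
\dim \Ext^{d}(L(\nu),L(\mu)) = \sum_{i+j=d}\sum_{\sigma\in\B}\dim \Ext^{i}(K(\sigma),L(\nu))\cdot \dim \Ext^{j}(K(\sigma),L(\mu)),
\]
each factor being a coefficient of a naive Kazhdan-Lusztig polynomial $p_{\sigma,\tau}$ whose constant term equals $1$ when $\sigma\preccurlyeq \tau$.

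For the lower bound, for each pair $(\mu,\sigma)\in S(d)$ from \cref{SSS:kgreater1} I would extract the single Brundan term with $i=-l(\sigma)$ and $j=l(\mu)-l(\sigma)$. The defining conditions of $S(d)$ (see \cref{E:BrundanConditions} and \cref{E:SchmidtConditions}) guarantee $\sigma\preccurlyeq \nu$, $\sigma\preccurlyeq \mu$, and $i+j=d$, so both Brundan factors equal the relevant constant terms, each of which is $1$. Hence $\dim \Ext^{d}(L(\nu),L(\mu))\geq 1$ for every $(\mu,\sigma)\in S(d)$. Combining this with the projective lower bound $\dim P(\mu)\geq Cd^{(m+n-k-1)k}$ from \cref{L:projlower} and the cardinality estimate $|S(d)|\geq Q(d)$ of degree $2k-1$ from \cref{L:CountingWeights} yields
\[
\dim \Ext^{d}\bigl(L(\nu), \textstyle\bigoplus_{\mu\in\B} L(\mu)^{\dim P(\mu)}\bigr) \geq Q(d)\cdot Cd^{(m+n-k-1)k},
\]
a polynomial of degree $(m+n)k-k^{2}+k-1$, giving a rate-of-growth lower bound of $(m+n)k-k^{2}+k$.

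For the upper bound, rearranging Brundan's formula gives
\[
\sum_{\mu\in\B}\dim \Ext^{d}(L(\nu),L(\mu)) = \sum_{\sigma\in\B}\sum_{i+j=d}\dim \Ext^{i}(K(\sigma),L(\nu))\sum_{\mu}\dim \Ext^{j}(K(\sigma),L(\mu)).
\]
By \cref{T:finitenumberofKLpolys} each individual Kazhdan-Lusztig coefficient is bounded by $k!$. By the degree bound \cref{E:KLdegreebounds}, for fixed $i$ the contributing $\sigma$ satisfy $l(\sigma)\in\{-i,\dots,-i-\dim\fg_{-1}\}$, and for fixed $\sigma,j$ the contributing $\mu$ satisfy $l(\mu)-l(\sigma)\in\{j,\dots,j+\dim\fg_{-1}\}$. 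Via \cref{L:suzhangonexts}, I would translate the count to the principal block of $\gl(k|k)$, where the Su-Zhang parameterization reduces weights of a prescribed length to partitions; Nathanson's bound \cite[Corollary 15.1]{nathanson} then gives $O(i^{k-1})$ contributing $\sigma$ for each $i$ and $O(d^{k-1})$ contributing $\mu$ for each $\sigma,j$. Summing over $i\in\{0,\dots,d\}$ produces $\sum_{\mu}\dim \Ext^{d}(L(\nu),L(\mu))\leq Cd^{2k-1}$, and combining with the projective upper bound $\dim P(\mu)\leq Cd^{(m+n-k-1)k}$ from \cref{T:Projupperbound} gives the matching upper bound on the rate of growth.

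The geometric identification follows from standard facts: $\dim\overline{(\fg_{1})_{k}}=(m+n)k-k^{2}$ is the dimension of the rank-$\leq k$ determinantal variety, $\dim\X_{L(\lambda)}=(m+n)k-k^{2}$ follows by arguments analogous to \cref{T:Kacvarieties}(a), and $\dim \V_{(\fg,\fg_{0})}(L(\lambda))=k$ is known from previous work on support varieties. The main obstacle is the upper-bound bookkeeping: coordinating the bounds on Kazhdan-Lusztig coefficients, on the number of relevant $\sigma$ of a given length, on the number of relevant $\mu$, and on projective dimensions so that they multiply to exactly the degree $(m+n)k-k^{2}+k-1$ matching the lower bound. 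The polytope in \cref{L:CountingWeights} supplies precisely the $2k-1$ extra dimensions accounting for the $+k$ beyond the Kac module complexity, and the $k=1$ case must be handled separately since the polytope there degenerates to a point.
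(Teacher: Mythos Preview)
Your proposal is correct and follows essentially the same route as the paper: reduce to $L(\nu)$ via \cref{T:constantatypicality}, bound the rate of growth above and below using Brundan's formula together with the projective-dimension estimates (\cref{T:Projupperbound}, \cref{L:projlower}) and the polytope count (\cref{L:CountingWeights}), then read off the geometric identification from Duflo--Serganova and \cite{BKN2}. The only notable difference is that for the lower-bound factor $\dim\Ext^{-l(\sigma)}(K(\sigma),L(\nu))$ you invoke the constant-term-equals-$1$ property of the KL polynomial directly, whereas the paper reaches the same value via Schmid's computation of $\Hom_{\fg_0}(L_0(\phi(\sigma)),S^{\bullet}(\fg_1^*))$; your shortcut is valid and slightly cleaner.
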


By \cref{T:constantatypicality} it suffices to compute complexity for the simple module $L(\nu)$ in the block $\B$ given in \cref{SS:generalblock}; recall the definition of $\nu$ in \cref{E:zetazero} for $k>1$ and \cref{E:nuzero} for $k=1$. The second equality of \cref{T:simplemodulecomplexity} follows from the well known formula for the dimension of the variety $\overline{(\fg_{1})_{k}}$  \cite[Proposition 12.2]{Har:92}.  The third equality follows from \cite[Theorems 4.5 and 5.4]{dufloserganova} and \cite[Theorem 4.8.1]{BKN2}.  Thus we focus on computing the complexity of $L(\nu)$ by computing sharp upper and lower bounds for the expression given by \cref{P:complexityext}.  In particular, we see that combining \cref{L:simpleupperbound} and \cref{L:simplelowerbound} (below) proves the first equality of \cref{T:simplemodulecomplexity}.  

\subsection{The Upper Bound}\label{SS:generalupperbound}

We first prove an intermediate result.

\begin{lemma}\label{L:simpleextbound} Let $L(\nu)$ and $\B$ be as above. Then 
\begin{equation*}
 \dim \Ext_{(\fg , \fg_{0})}^{d}(L(\nu),\bigoplus_{\mu\in \B } L(\mu)) \leq  Dd^{2k-1},
\end{equation*}
where $D$ is a positive constant. 
\end{lemma}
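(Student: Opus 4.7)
The plan is to use Brundan's identity expressing the \(\Ext\) between two simples as a bilinear pairing of Kac--simple \(\Ext\) groups, and then invoke the partition-counting tools developed in the proof of \cref{T:arbKacmodule}. First I would apply \cref{L:suzhangonexts} to reduce to the case \(m=n=k\) with \(\nu\) corresponding to the zero weight of the principal block \(\B_{0,k|k}\) of \(\gl(k|k)\); in that block \(|\cdot|=l(\cdot)\), so \cref{E:KLdegreebounds} is directly available. Brundan's formula \cite[Corollary~4.52]{brundan1} then gives
\begin{equation*}
\dim \Ext^{d}_{\F}\!\left( L(0), \bigoplus_{\mu \in \B_{0,k|k}} L(\mu) \right) = \sum_{\sigma \in \B_{0,k|k}} \sum_{i+j=d} \dim \Ext^{i}_{\F}(K(\sigma),L(0)) \cdot \dim \Ext^{j}_{\F}\!\left( K(\sigma), \bigoplus_{\mu} L(\mu) \right),
\end{equation*}
and it suffices to bound this triple sum by \(D d^{2k-1}\).

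I would then estimate three separate quantities. First, each coefficient \(\dim \Ext^{i}_{\F}(K(\sigma),L(0))\) is a Kazhdan--Lusztig polynomial coefficient, hence bounded by the absolute constant \(k!\) by the argument of \cref{T:finitenumberofKLpolys}, and \cref{E:KLdegreebounds} restricts the contributing \(i\) to an interval of size \(k^{2}+1\). Second, I would adapt the partition-counting argument from \cref{T:arbKacmodule} (applied with \(K(\sigma)\) in place of \(K(\lambda)\)) to establish a uniform bound
\begin{equation*}
\sum_{\mu} \dim \Ext^{j}_{\F}(K(\sigma),L(\mu)) \leq K d^{k-1},
\end{equation*}
valid for every \(\sigma\) contributing to the outer sum. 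Third, I would count the relevant \(\sigma\): by Brundan's Kazhdan--Lusztig theory, nonvanishing of \(\Ext^{i}_{\F}(K(\sigma),L(0))\) forces \(\sigma \preccurlyeq 0\) in the Bruhat order, which in \(\B_{0,k|k}\) (by \cref{SS:bruhat}) translates to \(\sigma_{t} \leq 0\) for \(t=1,\dotsc,k\); combined with \(|\sigma| \in [-d-k^{2},0]\), the \(k\)-tuple \((-\sigma_{k}, -\sigma_{k-1}, \dotsc, -\sigma_{1})\) is a partition of total size at most \(d+k^{2}\), and the standard estimate of \(O(s^{k-1})\) partitions of \(s\) with at most \(k\) parts, summed over \(s \in [0,d+k^{2}]\), yields \(O(d^{k})\) such \(\sigma\). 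Multiplying the three bounds produces \(O(d^{k}) \cdot (k^{2}+1) \cdot k! \cdot K d^{k-1} = O(d^{2k-1})\), as desired.

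The main obstacle will be securing the uniform \(K d^{k-1}\) estimate in the second step. In \cref{T:arbKacmodule} the analogous bound was proved with the Kac module input \(K(\lambda)\) held fixed, so the implicit constant depended on \(|\lambda|\). In the present setting \(\sigma\) varies and \(|\sigma|\) may grow linearly in \(d\). The crucial observation is that the constraints \(\sigma_{t} \leq 0\) and \(|\sigma| \geq -d-k^{2}\) together force \(-\sigma_{k} \leq -|\sigma| \leq d+k^{2}\); consequently the Berezinian twist needed to convert \(\sigma\) into a partition is of magnitude \(O(d)\), and the count of the \(\mu\)'s satisfying \(\mu \succcurlyeq \sigma\) with \(|\mu| \in [|\sigma|+j, |\sigma|+j+k^{2}]\) remains \(O(d^{k-1})\) with an implicit constant independent of \(\sigma\).
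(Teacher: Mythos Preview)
Your proposal is correct and follows essentially the same approach as the paper: reduce via the Su--Zhang bijection to the principal block of $\gl(k|k)$, apply Brundan's formula \cite[Corollary~4.52]{brundan1}, and combine the Kazhdan--Lusztig coefficient bound of \cref{T:finitenumberofKLpolys} with the $O(s^{k-1})$ count of partitions into at most $k$ parts. The only organizational differences are that the paper fixes $\mu$ first (bounding $\dim\Ext^{d}(\C,L(\mu))\leq C_{2}d^{k-1}$ and then counting $O(d^{k})$ contributing $\mu$ via the estimate $\sum_{j}|\mu_{j}|\leq d+k^{2}$), whereas you fix $\sigma$ first, and the paper invokes Schmid's result \cite{Sch:69} to pin down $\dim\Ext^{i}_{\F}(K(\sigma),\C)=1$ exactly at $i=-l(\sigma)$, whereas you use only the weaker $k!$ bound with $i$ in an interval of width $k^{2}+1$; both choices yield the same polynomial order.
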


\begin{proof}

By our choice of $\nu$ we may apply \cref{L:suzhangonexts} and assume without loss that 
$\gl (m|n) = \gl (k|k)$, $L(\nu) =  \C$, and $\B=\B_{0}$ is the principal block of $\F$.  We first analyze 
an individual term in the direct sum.  Fix $\mu\in \B_{0}$.  We have
\begin{align}\label{E:productreduction}
\dim \Ext^{d}_{(\g,\g_{0})}(\C,L(\mu))  &=  \sum_{i+j=d}\ \sum_{\s\in \B_{0}} \dim \Ext^{i}_{\F}(K(\s),\C)\,\dim \Ext^{j}_{\F}(K(\s),L(\mu))\notag\\
  &=  \sum_{i+j=d}\ \sum_{\s\in \B_{0}} \dim \Ext^{i}_{({\mathfrak p}^{+},{\mathfrak g}_{0})}(L_{0}(\s),\C)\,\dim \Ext^{j}_{\F}(K(\s),L(\mu)) \\ 
  &=  \sum_{i+j=d}\ \sum_{\s\in \B_{0}} \dim \Hom_{\g_{0}}(L_{0}(\s),S^{i}(\fg_{1}^{*}))\,\dim \Ext^{j}_{\F}(K(\s),L(\mu)), \notag
\end{align}
where the first line is by \cite[Theorem 4.51 and Corollary 4.52]{brundan1}, the second line is by Frobenius reciprocity, and in the third, 
by using a spectral sequence argument (cf.\ \cite[(3.3.2)]{BKN3}). 

Suppose there is a nonzero term in the last sum. Then by \cite{Sch:69} the $\Hom$-space is one dimensional and $\s\preceq 0$ with $i=-l(\s)$ (recalling that in the principal block of $\gl (k|k)$ we have $l(\sigma)=|\sigma|$ and the alternate description of the Bruhat order). Also $\s\preceq\mu$ and $j=l(\mu)-l(\s)-b$ where $b\in \{\, 0, 1,\dots, \dim \fg_{-1}=k^{2} \,\}$ (cf.\ \cref{T:finitenumberofKLpolys} and \cite[Theorem 4.5.1]{brundan1}). In particular, 
\begin{equation} \Label{E:LambdaLowerBound}
l(\mu) \ge l(\s) = -i \ge -d.
\end{equation}
Also, 
\[
l(\mu)+i-k^{2} = l(\mu)-l(\s)-k^{2} \le j = d-i  \le l(\mu)-l(\s) = l(\mu)+i; 
\] which in turn implies
\begin{equation} \Label{E:iBounds}
 \frac{d-l(\mu)}{2} \le i \le \frac{d+k^{2}-l(\mu)}{2}.
\end{equation}
But the last inequality together with $i\ge 0$ implies $l(\mu) \le d+k^{2}$. Combining this with \cref{E:LambdaLowerBound}, we have
\begin{equation} \Label{E:LambdaBounds}
-d \le l(\mu) \le d+k^{2}.
\end{equation}

We will also require another estimate on the entries of $\mu$. From the conditions $\s \preceq 0$, $\s \preceq \mu$, $i=l(0)-l(\s)$, $j\ge l(\mu)-l(\s)-k^{2}$, and $i+j=d$, we deduce that
\begin{equation} \Label{E:SigmaLengthEstimate}
[l(0)-l(\s)] + [l(\mu)-l(\s)] \le d+k^{2}.
\end{equation}
There is a greatest element $\s^{0}$ of $X^{+}_{0}$ dominated by both $0$ and $\mu$ in the Bruhat order, with coordinates defined by $\s^{0}_{j}=\min(0,\mu_{j}),\ 1\le j\le k$. Note that $[l(0)-l(\s^{0})] + [l(\mu)-l(\s^{0})] = \sum_{j=1}^{k} |\mu_{j}|$. Since $\s\preceq\s^{0}$, \cref{E:SigmaLengthEstimate} implies that
\begin{equation} \Label{E:AbsLambdaBounds}
\sum_{j=1}^{k} |\mu_{j}| \le d+k^{2}.
\end{equation}

So we may assume $\mu$ satisfies \cref{E:LambdaBounds} and \cref{E:AbsLambdaBounds}. Taking into account \cref{E:iBounds}, and the fact that $\dim \Ext^{j}_{\F}(K(\s),L(\mu))$ is the coefficient of a Kazhdan-Lusztig polynomial, which by \cref{T:finitenumberofKLpolys}  is bounded by a constant $C_{0}$, we have
\[
\dim \Ext^{d}_{(\g,\g_{0})}(\C,L(\mu)) \le C_{0} \sum_{i} \#\{\, 0\succeq\s\in X^{+}_{0} \mid l(\s)=-i\,\},
\]
where the sum is over $\max(0,\frac{d-l(\mu)}{2}) \le i \le\min(d, \frac{d+k^{2}-l(\mu)}{2})$. Now $0 \succeq \s \in X^{+}_{0}$ and $l(\s)=-i$ means that $-\s$ is a partition of $i$ into at most $k$ (positive) parts. And there is a constant $C_{1}$ (depending only on $k$) such that the number of such partitions is at most $C_{1} i^{k-1}$  \cite[Corollary 15.1]{nathanson}. Thus
\begin{equation}\label{E:bound}
\dim \Ext^{d}_{(\g,\g_{0})}(\C,L(\mu)) \le C_{0}  C_{1 }\sum_{i} i^{k-1},
\end{equation}
with the same conditions on $i$ as before. Assuming without loss of generality that $2d \ge k^{2}$, the last expression is maximized when $l(\mu)=-d+k^{2}$, giving
\[
\dim \Ext^{d}_{(\g,\g_{0})}(\C,L(\mu)) \le C_{0}  C_{1 }\sum_{d-k^{2}/2\le i\le d} i^{k-1} \le C_{2}d^{k-1}
\]
for some constant $C_{2}$ depending only on $k$.  Alternatively, to bound \cref{E:bound} it suffices to note that for $d$ sufficently large the number of terms in the sum is bounded by a constant independent of both $d$ and $\mu$, and that $i \leq (d+k^{2}-l(\mu))/2 \leq d + k^{2}/2$  (using \cref{E:LambdaBounds} to obtain the second inequality).

Lastly, in order for \cref{E:productreduction} to be nonzero the condition \cref{E:AbsLambdaBounds} certainly implies that each $|\mu_{j}| \le d+k^{2}$ for $1\le j\le k$, so the total number of such $\mu$ in $\B_{0}$ is bounded by $C_{3} d^{k}$ for some constant $C_{3}$ depending only on $k$.

Putting the ingredients together, we have
\begin{align*}
\dim \Ext^{d}_{(\g,\g_{0})}(\C,\bigoplus_{\mu \in \B_{0}}  L(\mu))
&= \sum_{\mu \in \B_{0} }  \dim \Ext^{d}_{\F}(\C,L(\mu))\\
&\le \sum_{\substack{\mu \in \B_{0}, \\ \sum |\mu_{k}|\le d+k^{2}}} C_{2}d^{k-1}\\
&\le C_{3}C_{2}d^{k}d^{k-1}\\
&=D d^{2k-1}. \qedhere
\end{align*}
\end{proof}

We now prove an upper bound for the complexity of $L(\nu)$.
\begin{prop}\label{L:simpleupperbound} Let $L(\nu)$ and $\B$ be as above. Then for all $d$ we have
\begin{equation*}
 \dim \Ext_{(\fg , \fg_{0})}^{d}(L(\nu),\bigoplus_{\mu\in \B } L(\mu)^{\dim P(\mu)}) \leq Kd^{{(m+n-k+1)k-1}},
\end{equation*}
where $K$ is a positive constant.
\end{prop}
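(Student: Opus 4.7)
The plan is to combine the preceding Lemma \ref{L:simpleextbound} with the projective dimension bound from \cref{T:Projupperbound} in essentially the same manner as in the proof of \cref{T:arbKacmodule}. Since the direct sum is indexed by simple modules in the block $\B$, I would first rewrite
\[
\dim \Ext^{d}_{(\fg,\fg_{0})}\Bigl(L(\nu),\bigoplus_{\mu\in\B} L(\mu)^{\,\dim P(\mu)}\Bigr) = \sum_{\mu\in\B} \dim P(\mu)\cdot \dim \Ext^{d}_{(\fg,\fg_{0})}(L(\nu),L(\mu)).
\]
The goal is then to separately bound the factor $\dim P(\mu)$ (uniformly in $\mu$, for those $\mu$ that actually contribute) and the summed Ext dimensions.

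For the projective factor, I would observe that if $\dim \Ext^{d}_{(\fg,\fg_{0})}(L(\nu),L(\mu)) \neq 0$, then $P(\mu)$ must appear as a direct summand of the $d$-th term of the minimal projective resolution of $L(\nu)$. Hence \cref{T:Projupperbound} applies, giving a constant $C$ (depending only on $m$, $n$, and $L(\nu)$) such that $\dim P(\mu) \leq C d^{(m+n-k-1)k}$ for every $\mu$ contributing to the sum. Pulling this bound outside the sum yields
\[
\dim \Ext^{d}_{(\fg,\fg_{0})}\Bigl(L(\nu),\bigoplus_{\mu\in\B} L(\mu)^{\,\dim P(\mu)}\Bigr) \leq C d^{(m+n-k-1)k}\cdot \dim \Ext^{d}_{(\fg,\fg_{0})}\Bigl(L(\nu),\bigoplus_{\mu\in\B} L(\mu)\Bigr).
\]

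Next I would invoke \cref{L:simpleextbound} directly to control the remaining Ext sum by $Dd^{2k-1}$ for a positive constant $D$. Multiplying the two estimates gives the bound $K d^{N}$ with $K = CD$ and exponent
\[
N = (m+n-k-1)k + (2k-1) = (m+n)k - k^{2} + k - 1 = (m+n-k+1)k - 1,
\]
which is precisely the exponent claimed. No step beyond simple arithmetic and assembly is required; the substantive content is entirely in \cref{T:Projupperbound} and \cref{L:simpleextbound}, so this proposition is essentially a short two-ingredient combination, with the arithmetic check of the exponent being the only thing one has to verify carefully.
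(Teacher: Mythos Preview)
Your proposal is correct and follows exactly the same approach as the paper: rewrite the Ext as a sum, use the fact that a nonzero $\Ext^{d}(L(\nu),L(\mu))$ forces $P(\mu)$ to be a summand of $P_{d}$ so that \cref{T:Projupperbound} applies, pull out the resulting bound $Cd^{(m+n-k-1)k}$, and then apply \cref{L:simpleextbound} to the remaining sum. The arithmetic check on the exponent is also correct.
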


\begin{proof}  Recall that if $P_{\bullet} \to L(\nu)$ is a minimal projective resolution, then since 
\[
\Ext^{d}_{\F}(L(\nu), L(\mu)) \cong \Hom_{\F} (P_{d}, L(\mu))
\] we have that this vector space is nonzero if and only if $P(\mu)$ is a direct summand of $P_{d}$.  
Therefore, combining \cref{L:simpleextbound} with \cref{T:Projupperbound}, we obtain 
\begin{align*}
\dim \Ext^{d}_{(\fg,\fg_{0}) }(L(\nu), \bigoplus_{\mu \in \B} L(\mu)^{P(\mu)} ) 
                            & \leq C d^{(m+n-k-1)k} \dim \Ext^{d}_{(\fg , \fg_{0}) }(L(\nu), \bigoplus_{\mu\in \B } L(\mu)) \\
                            &\leq  CD d^{(m+n-k-1)k}d^{2k-1} \\
                            &= K d^{(m+n-k+1)k-1}. \qedhere
\end{align*}
\end{proof}

\subsection{The Lower Bound}\label{SS:simplelowerbound}

We now compute a lower bound for the complexity of $L(\nu)$. 

\begin{prop}\label{L:simplelowerbound} Let $L(\nu)$ and $\B$ be as above. Then as a function of $d$ for all $d$ sufficiently large
\begin{equation*}
 \dim \Ext_{(\fg , \fg_{0})}^{d}(L(\nu),\bigoplus_{\mu\in \B } L(\mu)^{\dim P(\mu)})
\end{equation*}
is bounded below by a polynomial of degree ${(m+n-k+1)k-1}$ with positive leading coefficient.
\end{prop}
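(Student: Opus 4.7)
The plan is to combine three ingredients: the lower bound $|S(d)| \geq Q(d)$ of degree $2k-1$ from \cref{L:CountingWeights}, the uniform lower bound $\dim P(\mu) \geq C d^{(m+n-k-1)k}$ for $(\mu, \sigma) \in S(d)$ from \cref{L:projlower}, and a pairing argument showing that each $(\mu, \sigma) \in S(d)$ contributes at least $1$ to $\dim \Ext^{d}_{\F}(L(\nu), L(\mu))$. All three pieces have been prepared by the combinatorial set-up of \cref{SS:generalblock}.

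I would first expand
\begin{equation*}
\dim \Ext_{(\fg , \fg_{0})}^{d}\Bigl(L(\nu),\bigoplus_{\mu\in \B } L(\mu)^{\dim P(\mu)}\Bigr) = \sum_{\mu \in \B} \dim P(\mu) \cdot \dim \Ext^{d}_{\F}(L(\nu), L(\mu)),
\end{equation*}
and use \cref{L:suzhangonexts} to transport the Ext computations into the principal block of $\gl(k|k)$. There Brundan's decomposition \cite[Corollary 4.52]{brundan1} gives
\begin{equation*}
\dim \Ext^{d}_{\F}(L(\phi(\nu)), L(\phi(\mu))) = \sum_{i+j=d}\ \sum_{\tau} \dim \Ext^{i}_{\F}(K(\tau),L(\phi(\nu)))\cdot \dim \Ext^{j}_{\F}(K(\tau),L(\phi(\mu))).
\end{equation*}
For $k>1$ and each $(\mu,\sigma)\in S(d)$, I would pick $\tau = \phi(\sigma)$, $i = -l(\sigma)$, $j = l(\mu) - l(\sigma)$. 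The conditions (\cref{E:BrundanConditions}) and (\cref{E:SchmidtConditions}), which were built into $\tilde S(d)$ and transported through $\zeta$, $\omega$, and $\phi$, supply $\tau \preccurlyeq \phi(\nu) = 0$, $\tau \preccurlyeq \phi(\mu)$, and $i+j=d$. Since each KL polynomial $p_{\tau,\gamma}(q)$ has constant term $1$ whenever $\tau \preccurlyeq \gamma$, both Ext factors in Brundan's sum are at least $1$, so this $\tau$ contributes at least $1$ to $\dim \Ext^{d}_{\F}(L(\nu),L(\mu))$. Because $\phi$ is a bijection, distinct $\sigma$'s produce distinct $\tau$'s (and in fact distinct values of $i$), so $\dim \Ext^{d}_{\F}(L(\nu), L(\mu)) \ge |\{\sigma\mid(\mu,\sigma)\in S(d)\}|$. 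Summing over $\mu$ and inserting \cref{L:projlower} and \cref{L:CountingWeights} yields
\begin{equation*}
\sum_{\mu}\dim P(\mu)\cdot \dim \Ext^{d}_{\F}(L(\nu),L(\mu)) \ge \sum_{(\mu,\sigma) \in S(d)} \dim P(\mu) \ge C\, Q(d)\, d^{(m+n-k-1)k},
\end{equation*}
a polynomial in $d$ of degree $(2k-1)+(m+n-k-1)k = (m+n-k+1)k-1$ with positive leading coefficient.

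The case $k=1$ requires a separate treatment because the underlying polytope degenerates (\cref{R:polytopewhenkequals1}). Here I would use the alternative set $S(d)=\{(\mu^{(a)},\mu^{(a)})\mid 2d/3<a\le d\}$ and perform the corresponding Brundan/Su-Zhang computation directly in the principal block of $\gl(1|1)$: for each such $a$ satisfying the parity condition $a\equiv d+n-1\pmod 2$, the unique intermediate weight $\tau=t(\varepsilon_{1}-\varepsilon_{2})$ with $t=(a-n+1-d)/2$ satisfies both $\tau\preccurlyeq 0$ and $\tau\preccurlyeq \phi(\mu^{(a)})$ as well as the length equation, and so produces a nonzero Brundan contribution to $\Ext^{d}_{\F}(L(\nu),L(\mu^{(a)}))$. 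Paired with the bound $\dim P(\mu^{(a)})\geq C\,d^{m+n-2}$ from \cref{L:projlower} and the linearly-growing family of valid $a$'s, this gives a lower bound of degree $m+n-1 = (m+n-k+1)k-1$ at $k=1$.

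The main obstacle is the bookkeeping in the second paragraph: verifying that the purely combinatorial inequalities defining $\tilde S(d)$ in \cref{L:CountingWeights} translate through the cascade $\zeta \mapsto \omega \mapsto \phi$ into exactly the three hypotheses ($\tau \preccurlyeq 0$, $\tau \preccurlyeq \phi(\mu)$, and $l(\mu)-2l(\sigma)=d$) required to trigger the KL constant-term-$1$ property. Each identification is individually routine (using \cref{E:suzhangpartialorder} and \cref{E:suzhanglength}), but the chain is long and easy to get off-by-one, and checking it is the step which determined the precise shape of the inequalities in \cref{L:CountingWeights}.
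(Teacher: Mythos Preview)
Your proposal is correct and follows essentially the same approach as the paper. For $k>1$ the two arguments are almost identical: you invoke the constant-term-$1$ property of $p_{\tau,\gamma}(q)$ for both Ext factors, whereas the paper cites \cite[Theorem~4.51]{brundan1} for the factor $\Ext^{l(\mu)-l(\sigma)}(K(\sigma),L(\mu))$ and routes the other factor through the Hom-space identification and Schmid's theorem \cite{Sch:69}; these are the same fact in different clothing, and the condition \cref{E:SchmidtConditions} is exactly what forces $i=-l(\sigma)$ so that the constant term is the relevant coefficient. For $k=1$ there is a genuine but minor difference: the paper appeals to the explicit minimal projective resolution of $\C$ over $\gl(1|1)$ constructed in \cite{BKN3} to see directly which $L(\phi(\mu^{(a)}))$ occur in $P_d$, while you run the same Brundan decomposition as in the $k>1$ case and produce an explicit intermediate $\tau$. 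Your version has the advantage of being uniform in $k$; the paper's version avoids reproving facts about $\gl(1|1)$ that are already available.
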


\begin{proof} 
We first consider the case when $k=1$.  Let $P_{\bullet} \to \C$ be the minimal projective resolution of $\C$ as a $\gl(1|1)$-module given in \cite{BKN3}.  Combining the basic properties of a minimal projective resolution with \cref{L:suzhangonexts} and \cref{L:projlower} we have
\begin{align*}
\dim &\Ext^{d}_{(\g,\g_{0})}(L(\nu),\bigoplus_{\mu \in \B} L(\mu)^{\dim P(\mu)}) \notag\\
  &\geq  \sum_{(\mu, \mu) \in S(d)} \dim P(\mu) \dim \Ext^{d}_{\F }(L(\nu), L(\mu)) \notag\\ 
    &=  \sum_{(\mu, \mu) \in S(d)} \dim P(\mu) \dim \Ext^{d}_{\F_{(\gl(k|k), \gl(k|k)_{0})} }(L(\phi (\nu)), L(\phi(\mu))) \notag\\ 
    &\geq   Cd^{(m+n-k)k -k} \sum_{(\mu, \mu) \in S(d)} \dim \Ext^{d}_{\F_{(\gl(k|k), \gl(k|k)_{0})} }(\C, L(\phi(\mu))) \notag\\ 
     &=  Cd^{(m+n-k)k -k}\sum_{(\mu, \mu) \in S(d)} \dim \Hom_{\F_{(\gl(k|k), \gl(k|k)_{0})}}(P_{d}, L(\phi(\mu))). \notag
\end{align*}
From the construction of $P_{d}$ and $S(d)$ we can use \cref{SSS:kequal1} to see that this $\Hom$ space is nonzero for each $\mu = \mu^{(a)}$ when $a-n+1$ has the same parity as $d$.  Therefore the total dimension of the $\Hom$ spaces given above is bounded below by a linear function in $|S(d)|$ which, in turn, is bounded below by a linear function in $d$.  This implies the desired result for $k=1$.

We now consider the case $k>1$. As in \cref{E:productreduction} we have
\begin{align}\label{E:productreduction2}
\dim &\Ext^{d}_{(\g,\g_{0})}(L(\nu),\bigoplus_{\mu \in \B } L(\mu)^{\dim P(\mu)}) \notag\\
  & =\sum_{i+j=d}\ \sum_{\mu, \s\in \B} \dim P(\mu) \dim \Ext^{i}_{\F}(K(\s),L(\nu))\,\dim \Ext^{j}_{\F}(K(\s),L(\mu))\notag\\
   &\geq \sum_{i+j=d}\ \sum_{(\mu, \sigma) \in S(d)} \dim P(\mu)\dim \Ext^{i}_{\F}(K(\s),L(\nu))\,\dim \Ext^{j}_{\F}(K(\s),L(\mu)) 
\end{align}

 Each $(\mu, \sigma) \in S(d)$ satisfies conditions \cref{E:BrundanConditions} so by \cite[Theorem 4.51]{brundan1} we have that 
\[
\dim \Ext^{l(\mu)-l(\sigma)}_{\F}(K(\s),L(\mu)) = 1.
\]  By \cref{L:suzhangonexts} and the argument used in \cref{E:productreduction} we have
\begin{align}\label{E:SchmidApp}
\dim \Ext^{d-l(\mu)+l(\sigma)}_{\F}(K(\s),L(\nu)) &= \dim \Ext^{d-l(\mu)+l(\sigma)}_{\F_{(\gl (k|k), \gl(k|k)_{0})}}(K(\phi (\s)),\C ) \notag\\
   &=  \dim \Hom_{\gl (k|k)_{0}}(L_{0}(\phi (\sigma)),S^{d-l(\mu)+l(\sigma)}(\fg_{1}^{*})).
\end{align}
The element $(\mu, \sigma) \in  S(d)$ also satisfies \cref{E:SchmidtConditions} and so using \cref{E:suzhangpartialorder} and \cref{E:suzhanglength} to translate \cref{E:SchmidtConditions} to the analogous conditions on $(\phi (\mu), \phi (\sigma))$ we may apply  \cite{Sch:69} to \cref{E:SchmidApp} and obtain
\[
 \dim \Ext^{d-l(\mu)+l(\sigma)}_{\F}(K(\s),L(\nu))=1 
\]  
Taken together, we see that
\begin{equation*}
\sum_{i+j=d}  \sum_{(\mu, \sigma) \in S(d)}  \dim \Ext^{i}_{\F}(K(\s),L(\nu))\,\dim \Ext^{j}_{\F}(K(\s),L(\mu)) \geq |S(d)|.
\end{equation*}  Applying this along with \cref{L:projlower} to \cref{E:productreduction2} yields 
\begin{align*}
\dim \Ext^{d}_{(\g,\g_{0})}(L(\nu),\bigoplus_{\mu \in \B} L(\mu)^{\dim P(\mu)})  \geq Cd^{(m+n-k)k -k}|S(d)| \geq  Cd^{(m+n-k)k -k}Q(d).
\end{align*}
The last inequality follows from the fact that the cardinality of $S(d)$ equals the cardinality of $\tilde{S}(d)$ and so is bounded below by a polynomial, $Q(d)$, of degree $2k-1$ with positive leading coefficient, by \cref{L:CountingWeights}.  This proves the desired result when $k>1$.
\end{proof}

\section{A Categorical Invariant}\label{S:zcomplexity} 
\subsection{}\label{SS:zcomplexity} 
We will first assume that $\fg$ is a classical Lie superalgebra and $M$ is a module in $\F = \F_{(\fg ,\fg_{\0})}$.  It is natural to consider 
\[
z_{\F}(M) = r\left(\Ext^{\bullet}_{(\g,\g_{\0})}(M,\bigoplus  S)\right),
\] where the direct sum runs over all simple modules of $\F$.  Unlike complexity, $z_{\F}(-)$ has the advantage of being invariant under category equivalences.  

Using our complexity calculations, we compute this invariant for the Kac, dual Kac, and simple modules of $\gl(m|n)$.

\begin{theorem}\label{T:zcomplexity} Let $\fg =\gl (m|n)$ and let $X(\lambda)$ (resp.\  $L(\lambda)$) be a Kac or dual Kac module (resp.\ simple module) of atypicality $k$ in $\F$.  Then, 
\[
z_{\F}\left(X(\lambda) \right) = k
\] and 
\[
z_{\F}\left(L(\lambda) \right) = 2k.
\]
\end{theorem}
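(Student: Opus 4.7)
The strategy is to recycle the estimates from the proofs of \cref{T:arbKacmodule} and \cref{T:simplemodulecomplexity}, but without the weighting by $\dim P(\mu)$. Indeed, if $P_\bullet \to M$ is a minimal projective resolution then
\[
\sum_{S} \dim \Ext^d_\F(M, S) = \dim \Hom_\F\left(P_d, \bigoplus_{S} S\right)
\]
(with the sum over all simple modules $S$ in $\F$) equals the total number of indecomposable summands of $P_d$, whose rate of growth is $z_\F(M)$. The upper bounds then follow directly: the estimate $\dim \Ext^d_\F(K(\lambda), \bigoplus_\mu L(\mu)) \leq K d^{k-1}$ from \cref{E:Kacboundreduction} gives $z_\F(K(\lambda)) \leq k$ (and similarly $z_\F(K^-(\lambda)) \leq k$ via $\fg_{-1}$), while \cref{L:simpleextbound}, after reducing to the principal block of $\gl(k|k)$ via Serganova's block equivalence (which preserves $z_\F$ as it is a categorical equivalence), yields $\dim \Ext^d_\F(L(\lambda), \bigoplus_\mu L(\mu)) \leq D d^{2k-1}$ and hence $z_\F(L(\lambda)) \leq 2k$.

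For the lower bound on Kac modules, I again reduce to the principal block of $\gl(k|k)$. The observation that $p_{\lambda, \mu}(q)$ has constant term $1$ whenever it is nonzero (\cref{S:KLpolys}) translates to $\dim \Ext^{l(\mu) - l(\lambda)}_\F(K(\lambda), L(\mu)) \geq 1$ for every $\mu$ in the block with $\lambda \preccurlyeq \mu$. For fixed $\lambda$ and varying $d$, the number of such $\mu$ with $l(\mu) - l(\lambda) = d$ is bounded below by the number of partitions of $d$ into at most $k$ parts, which grows like $c\, d^{k-1}$. Summing over these $\mu$ gives $z_\F(K(\lambda)) \geq k$, and the dual argument handles $K^-(\lambda)$.

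For the lower bound on simples, the proof of \cref{L:simplelowerbound}, once the $\dim P(\mu)$ weight is removed, already yields
\[
\dim \Ext^d_\F(L(\nu), \bigoplus_\mu L(\mu)) \geq |S(d)| \geq Q(d),
\]
with $\deg Q = 2k-1$ by \cref{L:CountingWeights}; hence $z_\F(L(\nu)) \geq 2k$ for the specific weight $\nu$ of \cref{SS:generalblock}. To extend this to an arbitrary simple $L(\mu)$ of atypicality $k$, I plan to prove the analog of \cref{T:constantatypicality} for $z_\F$: given the splitting $L(\lambda)^* \otimes L(\lambda) \otimes L(\mu) \cong L(\mu) \oplus U$ coming from the ambidextrous trace, tensoring a minimal projective resolution $P_\bullet \to L(\lambda)$ by $V := L(\lambda)^* \otimes L(\mu)$ produces a projective resolution of $L(\lambda)^* \otimes L(\lambda) \otimes L(\mu)$ whose number $N_d$ of indecomposable summands at step $d$ satisfies $N_d \leq N(P_d) \cdot \max_\sigma \ell_\F\bigl(L(\sigma) \otimes V\bigr)$, since the head of $P(\sigma) \otimes V$ is a semisimple quotient of $L(\sigma) \otimes V$. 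A uniform bound $\ell_\F(L(\sigma) \otimes V) \leq C_V$ then yields $z_\F(L(\mu)) \leq z_\F(L(\lambda)^* \otimes L(\lambda) \otimes L(\mu)) \leq z_\F(L(\lambda))$, and symmetry completes the argument.

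The main obstacle is establishing the uniform length bound $\ell_\F(L(\sigma) \otimes V) \leq C_V$ as $\sigma$ ranges over all simples of the block; this is more delicate than the dimension bound used for complexity in \cref{T:constantatypicality}, but should follow from the weight-space combinatorics of tensor products in $\F$, since the composition factors of $L(\sigma) \otimes V$ have highest weights in the finite set $\sigma + \{\text{weights of } V\}$ with multiplicities controlled by the corresponding weight multiplicities of $V$.
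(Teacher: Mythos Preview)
Your upper bounds and your direct lower bound for Kac modules are correct, and your lower bound for the specific simple $L(\nu)$ via $|S(d)|$ is also fine. The paper, however, obtains \emph{both} lower bounds by a much shorter route that avoids any transfer between simples: it argues by contradiction with the complexity computations already established. Concretely, if $z_\F(K(\lambda)) \le k-1$, then combining $\dim \Ext^d_{(\fg,\fg_0)}(K(\lambda), \bigoplus L(\mu)) \le K' d^{k-2}$ with the bound $\dim P(\mu) \le C d^{(m+n-k-1)k}$ of \cref{T:Projupperbound} (via \cref{E:kacextn2}) would force $c_\F(K(\lambda)) \le (m+n-k-1)k + (k-1) < (m+n)k - k^2$, contradicting \cref{T:arbKacmodule}. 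The identical contradiction with $2k-1$ in place of $k-1$ and \cref{T:simplemodulecomplexity} in place of \cref{T:arbKacmodule} handles \emph{every} simple $L(\lambda)$ of atypicality $k$ at once, since the complexity of $L(\lambda)$ is already known for arbitrary $\lambda$.

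Your transfer argument for simples has a genuine gap. The claim that $\operatorname{head}(P(\sigma)\otimes V)$ is a semisimple quotient of $L(\sigma)\otimes V$ is false: in $\gl(1|1)$ with $\sigma=0$ and $V=K(0)$ one has $P(0)\otimes K(0)\cong P(0)\oplus P(-1)$ with head $L(0)\oplus L(-1)$, while $L(0)\otimes K(0)=K(0)$ has $L(0)$ as its unique simple quotient. The surjection $P(\sigma)\otimes V \twoheadrightarrow L(\sigma)\otimes V$ only gives the inequality in the wrong direction, namely that $\operatorname{head}(L(\sigma)\otimes V)$ is a quotient of $\operatorname{head}(P(\sigma)\otimes V)$. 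What you actually need is a uniform bound on the number of indecomposable summands of $P(\sigma)\otimes V$, which equals $\sum_\tau [V^*\otimes L(\tau):L(\sigma)]$; this is not the same as $\ell(L(\sigma)\otimes V)=\sum_\tau[L(\sigma)\otimes V:L(\tau)]$, and neither quantity is bounded by your weight-combinatorics sketch, since composition factors of $L(\sigma)\otimes V$ need not have highest weights in $\sigma+\operatorname{wt}(V)$ (that holds for Kac modules, not for simples). Such a uniform bound may well be provable for $\gl(m|n)$ with enough work on translation functors, but the paper's contradiction argument makes it unnecessary.
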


\begin{proof} We consider $K(\lambda)$; the proof for $K^{-}(\la)$ is the same. By  \cref{E:Kacboundreduction} there is a positive constant $K$ such that for all $d \geq 1$
\[
\dim \Ext^{d}_{(\g,\g_{0})}(K(\lambda),\bigoplus  S) \leq Kd^{k-1}.
\]  On the other hand, if the left hand side could be bounded above by $K'd^{k-2}$ for some positive constant $K'$, then this would imply by the proof of \cref{T:arbKacmodule} that the complexity of $K(\lambda)$ is strictly less than $(m+n)k-k^{2}$, contradicting the conclusion of that proof.  Therefore, the power $k-1$ is sharp and $z_{\F}\left(K(\lambda) \right)=k$.

An identical argument applies to $L(\lambda)$ using \cref{L:simpleextbound}. 
\end{proof}

\subsection{}\label{SS:fcomplexity} Recalling that we assume $n \leq m$, we set $\ff_{\1}\subset \fg_{\1}$ to be the span of the matrix units $E_{m-t+1, m+t}$ and $E_{m+t, m-t+1}$ for $t=1, \dotsc , n$.  Set $\ff_{0} = \ff_{\0} = [\ff_{\1}, \ff_{\1}]$.  We then define a subalgebra of $\fg$ by 
\[
\ff : = \ff_{\0}\oplus \ff_{\1}.
\] The Lie superalgebra $\ff$ is classical (and Type I) and so has a support variety theory.  Furthermore, as $[\ff_{0}, \ff_{\1}] = 0$ it follows that these varieties admit a rank variety description and, in particular, can be identified as  subvarieties of $\ff_{\1}$. 

The subalgebra $\ff$ is a ``detecting'' Lie subsuperalgebra of $\fg$ which first appeared in \cite{BKN1} and can be seen to have a remarkable cohomological detection property due to work of Lehrer, Nakano, and Zhang \cite{LNZ} (where it is called $\tilde{\ff}$).   We now show that these detecting subalgebras naturally capture the above categorical invariant for Kac, dual Kac, and simple modules.

\begin{theorem}\label{T:fdetectszcomplexity}  Let $\fg = \gl (m|n)$ and let $X(\lambda)$ denote a Kac, dual Kac, or simple module in $\F=\F_{(\fg, \fg_{0})}$. Then 
\[
z_{\F}\left(X(\lambda) \right) = \dim \V_{(\ff , \ff_{0})}\left(X(\lambda) \right) = c_{\F_{(\ff , \ff_{0})}}\left(X(\lambda) \right).
\]

\end{theorem}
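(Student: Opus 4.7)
The plan is to leverage \cref{T:zcomplexity}, which identifies $z_\F(X(\lambda))$ as $k := \atyp(\lambda)$ for Kac and dual Kac modules and as $2k$ for simple modules. It therefore suffices to prove
\[
\dim \V_{(\ff, \ff_0)}(X(\lambda)) = c_{\F_{(\ff, \ff_0)}}(X(\lambda)) = z_\F(X(\lambda)).
\]

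First I would establish that $c_{\F_{(\ff, \ff_0)}}(M) = \dim \V_{(\ff, \ff_0)}(M)$ for every $M \in \F_{(\ff, \ff_0)}$ by adapting the argument of \cref{T:psupports} to the detecting subalgebra $\ff$. Because $[\ff_0, \ff_{\1}] = 0$, the relative cohomology ring is $\HH^\bullet(\ff, \ff_0; \C) \cong S(\ff_{\1}^*)$, of Krull dimension $2n$, and the Lyndon-Hochschild-Serre spectral sequence for the ideal $\ff_{\1} \triangleleft \ff$ collapses since $\ff_0$-modules in $\F_{(\ff, \ff_0)}$ are semisimple. Together with Frobenius reciprocity, as in the proof of \cref{T:psupports}, and the rank-variety identification of \cite[Sections 5, 6]{BKN1}, this reduces the rate of growth of $\Ext^{\bullet}_{(\ff, \ff_0)}(M, \bigoplus S^{\dim P(S)})$ to the dimension of the $\ff_{\1}$-rank variety, giving the claimed identity.

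Next I would compare $c_{\F_{(\ff, \ff_0)}}(X(\lambda))$ with $z_\F(X(\lambda))$ via matching inequalities. For the upper bound, I would restrict a minimal projective resolution of $X(\lambda)$ in $\F = \F_{(\fg, \fg_0)}$ down to $\F_{(\ff, \ff_0)}$; since $U(\fg)$ is $U(\ff)$-free by the PBW theorem, projective $\fg$-modules restrict to projective $\ff$-modules, and the restricted complex is a projective resolution whose rate of growth is bounded by that of $\Ext^{\bullet}_{(\fg, \fg_0)}(X(\lambda), \bigoplus_S S)$, namely $z_\F(X(\lambda))$. For the lower bound I would work directly with the rank variety. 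In the Kac and dual Kac cases, \cref{T:Kacvarieties} gives $\X_{K^{\pm}(\lambda)} = \overline{(\fg_{\pm 1})_k}$, and the intersection with the $n$-dimensional anti-diagonal subspace $\ff_{\pm 1} \subset \fg_{\pm 1}$ is a union of $k$-dimensional coordinate subspaces of total dimension $k$. In the simple case, I would invoke the description of $\X_{L(\lambda)}$ arising from \cite{dufloserganova} together with \cref{T:simplemodulecomplexity} and use the self-commuting condition $[x_1, x_{-1}] = 0$ forced on $x = x_1 + x_{-1} \in \ff_{\1}$ to exhibit a $2k$-dimensional subvariety inside $\V_{(\ff, \ff_0)}(L(\lambda))$.

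The main obstacle will be producing the sharp lower bound in the simple case. Whereas Kac modules have support concentrated on a single side of $\fg_{\1}$ by \cref{T:Kacvarieties}, simple modules receive independent contributions from \emph{both} the $\ff_1$- and $\ff_{-1}$-sides, and one must argue carefully that these contributions combine to realize the full $2k$-dimensional support rather than collapsing to the naive $k$-dimensional picture inherited from the Kac filtration. Concretely, writing $x = x_1 + x_{-1} \in \ff_{\1}$ with coordinates $(a_t, b_t)$, the constraint $[x,x] = 2[x_1, x_{-1}] = 0$ becomes the quadratic condition $a_t b_t = 0$ for each $t$; the delicate point is to exhibit enough such pairs within $\X_{L(\lambda)}$ to achieve dimension $2k$, leveraging the fact that $\X_{L(\lambda)}$ properly contains both $\overline{(\fg_1)_k}$ and $\overline{(\fg_{-1})_k}$ and that restriction to $\ff_{\1}$ preserves these rank conditions separately in each summand.
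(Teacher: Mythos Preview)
Your upper-bound step contains a genuine gap. When you restrict a minimal projective resolution $P_{\bullet} \to X(\lambda)$ from $\F_{(\fg,\fg_{0})}$ to $\F_{(\ff,\ff_{0})}$, the dimensions of the terms do not change, so the rate of growth of the restricted resolution is $c_{\F}(X(\lambda))$, not $z_{\F}(X(\lambda))$. Indeed $\dim P_{d} = \sum_{S} \dim P(S)\cdot \dim \Ext^{d}_{\F}(X(\lambda),S)$, which is exactly the quantity whose rate of growth is $c_{\F}(X(\lambda))$; dropping the weights $\dim P(S)$ is what distinguishes $z_{\F}$ from $c_{\F}$. For a simple module of atypicality $k$ in $\gl(m|n)$ these differ: $c_{\F}(L(\lambda)) = (m+n)k-k^{2}+k$ while $z_{\F}(L(\lambda))=2k$. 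So your argument only yields $c_{\F_{(\ff,\ff_{0})}}(X(\lambda)) \leq c_{\F}(X(\lambda))$, which is too weak.

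The paper avoids this by never attempting to bound $c_{\F_{(\ff,\ff_{0})}}$ from above via restriction. Instead it bounds $\dim \V_{(\ff,\ff_{0})}(X(\lambda))$ directly. Using the $\Z$-grading trick from the proof of \cref{T:Kacvarieties}, one shows that if $y=y_{-1}+y_{1}\in \V_{(\ff,\ff_{0})}(X(\lambda))$ then each $y_{\pm 1}$ also lies in the variety, yielding the containment
\[
\V_{(\ff,\ff_{0})}(X(\lambda)) \subseteq \bigl(\ff_{\1}\cap \V_{\fg_{-1}}(X(\lambda))\bigr)\times\bigl(\ff_{\1}\cap \V_{\fg_{1}}(X(\lambda))\bigr),
\]
and the right-hand side has dimension $k$ (Kac/dual Kac) or $2k$ (simple) by \cref{T:Kacvarieties} and the known $\fg_{\pm 1}$-variety calculations. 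Your lower-bound sketch for simple modules is also incomplete: the paper handles it not by analyzing the quadratic condition $[x,x]=0$ inside $\ff_{\1}$, but by invoking \cref{E:splitting} to show $\V_{(\ff,\ff_{0})}(L(\lambda))$ depends only on atypicality, then using Serganova's equivalence to reduce to $\Res_{\mu}L(\lambda)\cong \C$ over $\fg'\cong\gl(k|k)$, where $\ff'_{\1}\subseteq \V_{(\ff,\ff_{0})}(L(\lambda))$ is immediate.
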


\begin{proof}  The second equality is immediate from \cite[Theorem 2.9.1(c)]{BKN3} and \cite[Proposition 5.2.2]{BKN2}.  We now consider the first equality.

We first obtain an upper bound on $\dim \V_{(\ff, \ff_{0})}(X(\lambda))$.  Let $y \in \V_{(\ff, \ff_{0})}\left( X(\lambda)\right)$ and choose $y_{-1} \in \ff_{\1} \cap \fg_{-1}$ and $y_{1} \in \ff_{\1} \cap \fg_{1}$ so that $y=y_{-1}+y_{1}$.  Since $X(\lambda)$ inherits a $\Z$-grading from the $\Z$-grading on $\fg$, the argument using the $\Z$-action on $X(\lambda)$ in the proof of \cref{T:Kacvarieties} shows that $y_{-1}, y_{1} \in  \V_{(\ff, \ff_{0})}\left(X(\lambda) \right)$.  From this we conclude that 
\begin{equation}\label{E:Bound}
\V_{(\ff, \ff_{0})}\left( X(\lambda) \right)\subseteq \left( \ff_{\1} \cap \V_{\fg_{-1}}(X(\lambda) \right) \times \left( \ff_{\1} \cap \V_{\fg_{1}}(X(\lambda) \right).
\end{equation}

If $X(\lambda)$ is a Kac module we can use \cref{T:Kacvarieties} and \cite[Theorem 3.3.1]{BKN3} to deduce $\dim \left(  \ff_{\1} \cap \V_{\fg_{1}}(X(\lambda))\right)=k$ and  $\dim  \left( \ff_{\1} \cap \V_{\fg_{-1}}(X(\lambda))\right)=0$ and, hence, the dimension of the right hand variety in \cref{E:Bound}  is $k$.  The dual Kac module is handled similarily using  \cref{T:Kacvarieties} and \cite[Theorem 3.3.2]{BKN3}.  

If $X(\lambda)$ is a simple module, then we use \cite[(3.8.1)]{BKN3} (which ultimately depends on calculations in \cite{dufloserganova}) to deduce that $\dim  \left( \ff_{\1} \cap \V_{\fg_{i}}(X(\lambda))\right)=k$ for $i=-1,1$ and so the dimension of the right hand variety in \cref{E:Bound} is $2k$.

We next obtain a lower bound. From the rank variety description it is clear that 
\begin{equation}\label{E:Bound2}
\left( \ff_{\1} \cap \V_{\fg_{-1}}(X(\lambda) \right) \cup \left( \ff_{\1} \cap \V_{\fg_{1}}(X(\lambda) \right) \subseteq \V_{(\ff, \ff_{0})}(X(\lambda)). 
\end{equation}  If $X(\lambda)$ is a Kac or dual Kac module then by the above calculations the left hand variety is $k$-dimensional. Therefore, the dimension of $ \V_{(\ff, \ff_{0})}\left( X(\lambda)\right)$ equals $k$ and applying \cref{T:zcomplexity} proves the theorem for Kac and dual Kac modules.

Now if $X(\lambda)=L(\lambda)$ is a simple module and $L(\gamma)$ is another simple $\gl (m|n)$-module of atypicality $k$, then by \cref{E:splitting} and the basic properties of rank varieties \cite[Proposition 6.3.1 and Theorem 6.4.2]{BKN1} we have 
\[
\V_{(\ff , \ff_{0})}(L(\lambda)) = \V_{(\ff, \ff_{0})}(L(\gamma)).
\]     In particular, applying this to the block equivalences of Serganova introduced in \cref{SS:serganovablockequivalence} we see that we may assume without loss that $\Res_{\mu} L(\lambda)$ is isomorphic to the trivial module for the subalgebra $\fg' \cong \gl (k|k)$.  If we set $\ff ' = \ff \cap  \fg '$, then $\ff '$ is the corresponding detecting subalgebra for $\fg'$.  By the rank variety description it is immediate that 
\[
\ff'_{\1}= \V_{(\ff ', \ff '_{0})}(\Res_{\mu} L(\lambda)) \subseteq \V_{(\ff ', \ff '_{0})}(L(\lambda))\subseteq \V_{(\ff , \ff _{0})}(L(\lambda)).
\]  Thus, the dimension of $ \V_{(\ff , \ff _{0})}(L(\lambda))$ is at least $2k$ and, hence, equals $2k$.  Combining this with \cref{T:zcomplexity} proves the theorem for simple modules.
\end{proof}

\subsection{\texorpdfstring{The $\gl (1|1)$ Case}{The gl(1|1) Case}}\label{SS:complexity}  We now show that the $\ff$ support varieties capture the invariant defined in the previous section and the complexity 
for an arbitrary $\gl (1|1)$-module.  

\begin{theorem}\label{T:gl11zcomplexity} Let ${\mathfrak g}=\gl (1|1)$ and $M\in {\mathcal F}_{(\g,\g_{0})}$. Then 
$$z_{\F}\left(M\right)= c_{\mathcal F}(M)=\dim \V_{(\ff, \ff_{0})}(M).$$
\end{theorem}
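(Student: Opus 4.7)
The plan is to prove the two equalities $z_{\F}(M) = c_{\F}(M)$ and $c_{\F}(M) = \dim \V_{(\ff,\ff_{0})}(M)$ separately.

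For $z_{\F}(M) = c_{\F}(M)$: the inequality $z_{\F}(M) \le c_{\F}(M)$ is immediate since $\bigoplus S$ is a direct summand of $\bigoplus S^{\dim P(S)}$. For the reverse, I would use that within each block of $\F$ for $\gl(1|1)$ the projective indecomposables have uniformly bounded dimension---namely $2$ in typical blocks and $4$ in atypical blocks, both computable directly from the PBW description of $K(\lambda)$ together with the standard analysis of the atypical block structure. Since $\Ext^{d}_{\F}(M,S)$ vanishes unless $S$ lies in the same block as $M$, a block-dependent constant $C$ gives $\dim \Ext^{d}(M, \bigoplus S^{\dim P(S)}) \le C \cdot \dim \Ext^{d}(M, \bigoplus S)$, hence $c_{\F}(M) \le z_{\F}(M)$.

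For $c_{\F}(M) = \dim \V_{(\ff,\ff_{0})}(M)$ the strategy is to reduce to $\ff$-complexity via restriction. Since $\ff \cong \mathfrak{sl}(1|1)$ is an ideal of $\fg = \gl(1|1)$ with one-dimensional even quotient, PBW gives $U(\fg) \cong U(\ff) \otimes \C[E_{1,1}]$ as a $U(\ff)$-module, so restriction is exact, preserves projectives, and maps $\F$ into $\F_{(\ff,\ff_{0})}$ (using $\ff_{0} \subset \fg_{0}$). Restricting a minimal projective resolution of $M$ over $\fg$ therefore produces a projective resolution over $\ff$ of the same rate of growth, so $c_{\F_{(\ff,\ff_{0})}}(M) \le c_{\F}(M)$. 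Combined with \cite[Theorem 2.9.1(c)]{BKN3} (which gives $c_{\F_{(\ff,\ff_{0})}}(M) = \dim \V_{(\ff,\ff_{0})}(M)$, since $[\ff_{0},\ff_{\1}]=0$), this yields $\dim \V_{(\ff,\ff_{0})}(M) \le c_{\F}(M)$.

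For the reverse inequality $c_{\F}(M) \le \dim \V_{(\ff,\ff_{0})}(M)$, I would invoke the relative Lyndon--Hochschild--Serre spectral sequence for the ideal $(\ff,\ff_{0}) \lhd (\fg,\fg_{0})$:
\[
E_{2}^{p,q} = H^{p}\bigl((\fg/\ff,\, \fg_{0}/\ff_{0});\; \Ext^{q}_{(\ff,\ff_{0})}(M,N)\bigr) \Longrightarrow \Ext^{p+q}_{(\fg,\fg_{0})}(M,N).
\]
Since $(\fg/\ff,\, \fg_{0}/\ff_{0}) \cong (\C,\, \C)$, the relative Chevalley--Eilenberg complex is concentrated in degree zero, so the spectral sequence collapses and yields $\Ext^{d}_{(\fg,\fg_{0})}(M,N) \cong (\Ext^{d}_{(\ff,\ff_{0})}(M,N))^{\fg_{0}/\ff_{0}}$ for every $N$. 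Applied to $N = \bigoplus_{S} S^{\dim P(S)}$ and then compared (via weight-space decomposition) with $\Ext^{d}_{(\ff,\ff_{0})}(M,-)$ evaluated on the $\ff$-simples---exploiting that each $\fg$-simple restricts to an $\ff$-module with a specific central character and that the $\ff$-simples are parametrized by their central character---this yields $c_{\F}(M) \le c_{\F_{(\ff,\ff_{0})}}(M) = \dim \V_{(\ff,\ff_{0})}(M)$, completing the argument.

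The main obstacle I anticipate is the final target-comparison step: one must carefully transfer the pointwise Ext inequality supplied by the spectral sequence into a rate-of-growth inequality on complexities, accounting for how the $\fg$-target $\bigoplus_{S}S^{\dim P(S)}$ decomposes as an $\ff$-module and matches the natural $\ff$-target $\bigoplus_{T}T^{\dim P_{\ff}(T)}$. This involves summing contributions of infinitely many $\fg$-simples that restrict to the same $\ff$-simple $T$, controlled via the central character.
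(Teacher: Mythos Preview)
Your proposal is correct and follows essentially the same route as the paper: bounded projective dimensions give $z_{\F}=c_{\F}$, restriction gives $c_{\F_{(\ff,\ff_{0})}}(M)\le c_{\F}(M)$, and the collapsed Lyndon--Hochschild--Serre spectral sequence for the ideal $\ff\lhd\fg$ gives the reverse inequality. The obstacle you flag is resolved exactly as you anticipate: each $\fg$-simple $L(\lambda)$ restricts to the trivial $\ff$-module but carries distinct $\mathfrak{h}$-weight $2\lambda$, so the sum over $\lambda$ of the invariants $\bigl(\Ext^{d}_{(\ff,\ff_{0})}(M,\C^{\oplus 4})\otimes(2\lambda)\bigr)^{\mathfrak{h}}$ picks out the distinct $-2\lambda$ weight spaces of $\Ext^{d}_{(\ff,\ff_{0})}(M,\C^{\oplus 4})$ and therefore embeds into the whole, yielding $c_{\F}(M)\le c_{\F_{(\ff,\ff_{0})}}(M)$.
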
 

\begin{proof} We may reduce to the case when $M$ is in the principal block ${\mathcal B}_{0}$ of 
${\mathcal F}$ otherwise $M$ will be projective. The simple modules of the principal block, $L(\lambda):=(\lambda\epsilon_{1}-\lambda\epsilon_{2})$, are one-dimensional 
where $\lambda\in {\mathbb Z}$ and the projective cover, $P(\lambda)$, of $L(\lambda)$ is four dimensional. Therefore, 
$$z_{\F}(M) = r\left(\Ext^{\bullet}_{(\g,\g_{0})}(M,\bigoplus_{\lambda\in {\mathbb Z}}L(\lambda))\right)=
r\left(\Ext^{\bullet}_{(\g,\g_{0})}(M,\bigoplus_{\lambda\in {\mathbb Z}}L(\lambda)^{\dim P(\lambda)})\right)=c_{\mathcal F}(M).$$ 

Next observe that $\ff\cong \mathfrak{sl}(1|1)$ which is an ideal in $\mathfrak{gl}(1|1)$ with $\mathfrak{gl}(1|1)/\mathfrak{sl}(1|1)\cong \mathfrak{h}$ where 
${\mathfrak h}$ is the one-dimensional subalgebra spanned by the $2\times 2$ diagonal matrix $\text{diag}\{1,-1\}$. One can now consider 
the spectral sequence: 
$$E_{2}^{i,j}=\text{Ext}^{i}_{({\mathfrak h},{\mathfrak h})}({\mathbb C},\text{Ext}^{j}_{(\f,\f_{0})}(M,\oplus_{\lambda\in {\mathbb Z}}L(\lambda)^{\oplus 4})\Rightarrow 
\text{Ext}^{i+j}_{(\g,\g_{0})}(M,\oplus_{\lambda\in {\mathbb Z}}L(\lambda)^{\oplus 4}).$$ 
This spectral sequence collapses and yields the following isomorphisms: 
\begin{eqnarray*}
\text{Ext}^{\bullet}_{(\g,\g_{0})}(M,\oplus_{\lambda\in {\mathbb Z}}L(\lambda)^{\oplus 4})
&\cong & \text{Hom}_{\mathfrak h}({\mathbb C},\text{Ext}^{\bullet}_{(\ff,\ff_{0})}(M,\oplus_{\lambda\in {\mathbb Z}}L(\lambda)^{\oplus 4})) \\
&\cong &\oplus_{\lambda\in {\mathbb Z}} \ \text{Hom}_{\mathfrak h}({\mathbb C},\text{Ext}^{\bullet}_{(\ff,\ff_{0})}(M,{\mathbb C}^{\oplus 4})\otimes (2\lambda))\\
&\cong &\oplus_{\lambda\in {\mathbb Z}} \ \text{Ext}^{\bullet}_{(\ff,\ff_{0})}(M,{\mathbb C}^{\oplus 4})_{-2\lambda}\\
&\subseteq &\text{Ext}^{\bullet}_{(\ff,\ff_{0})}(M,{\mathbb C}^{\oplus 4}).
\end{eqnarray*} 
The lower subscript $-2\lambda$ on the third line indicates the $-2\lambda$ weight space under the action of the aforementioned matrix in ${\mathfrak h}$. 
Now we use the following facts: (i) $M$ is in the principal block for ${\mathcal F}_{(\g,\g_{0})}$ thus is in the principal block of 
${\mathcal F}_{(\ff,\ff_{0})}$, (ii) the principal block of ${\mathcal F}_{(\ff,\ff_{0})}$ has one simple module (namely the trivial module), 
and (iii) the projective cover of the trivial module in ${\mathcal F}_{(\ff,\ff_{0})}$ is four dimensional. These facts in conjunction with the above calculation show that 
$$c_{{\mathcal F}_{(\g,\g_{0})}}(M)\leq c_{{\mathcal F}_{(\ff,\ff_{0})}}(M).$$ 
In order to show equality, one can use the fact that any projective resolution 
in ${\mathcal F}_{(\g,\g_{0})}$ will restrict to a projective resolution in ${\mathcal F}_{(\ff,\ff_{0})}$. Finally, we apply \cite[Theorem 2.9.1]{BKN3}, and the 
fact that the there is only one simple module in the principal block of ${\mathcal F}_{(\ff,\ff_{0})}$ to conclude that 
$c_{{\mathcal F}_{(\ff,\ff_{0})}}(M)=\dim {\mathcal V}_{(\ff,\ff_{0})}(M)$. 
\end{proof}

\let\section=\oldsection
\bibliographystyle{eprintamsmath}
\bibliography{BKN4}

\end{document}